\newcommand*{\lhra}{\ensuremath{\lhook\joinrel\relbar\joinrel\rightarrow}}
\title{\sc On the homotopy fibre of the inclusion map $F_n(X)\lhra\prod_1^nX$ for some orbit spaces $X$}
\author{Marek Golasi\'nski, Daciberg Lima Gon\c{c}alves and John Guaschi}
\date{\today}
\def\R{{\mathbb R}}
\def\S{{\mathbb S}}
\def\Z{{\mathbb Z}}
\newtheorem{thmX}{Theorem}[section]
\newtheorem{proX}[thmX]{Proposition}
\newtheorem{lemX}[thmX]{Lemma}
\newtheorem{CorX}[thmX]{Corollary}
\newtheoremstyle{remarkk}{}{}{}{}{\scshape}{.}{ }{}
\theoremstyle{remarkk}
\newtheorem{RemX}[thmX]{Remark}
\begin{document}

\vspace*{3cm}

\begin{abstract}  \emph{Under certain conditions, we describe the homotopy type of the homotopy fibre of the inclusion map
$F_n(X)\lhra\prod_1^nX$ for the $n^{th}$ configuration space $F_n(X)$  of a topological manifold $X$ without boundary such that
$\operatorname{\text{dim}}(X)\ge 3$. We then apply our results to the cases where either the universal covering of $X$ is contractible
or $X$ is an orbit space $\mathbb{S}^k/G$ of a tame, free action of a Lie group $G$ on the  $k$-sphere $\mathbb{S}^k$.
If the group $G$ is finite and $k$ is odd, we give a full description of  the long exact sequence in homotopy of the homotopy fibration
of the inclusion map $F_n(\mathbb{S}^k/G)\lhra\prod_1^n\mathbb{S}^k/G$.}
\end{abstract}

\subjclass[2010]{Primary: 55R80; secondary: 20F36, 20J06}
\keywords{configuration space, free action of a group, homotopy fibre, homotopy pull-back, Lie group, manifold, sphere, orbit space, whisker map}

\maketitle

\noindent
\emph{This paper is dedicated to Professor S.~Gitler, for his outstanding work in homotopy theory, his readiness to encourage  the scientific progress of colleagues and of institutions, his unwavering intellectual honesty, and his warm friendship.}

\bigskip

\noindent
{\large\bf Introduction}

\medskip

Configuration spaces have a long history and continue to be an active topic of research, due to their interesting topological and geometric
properties~\cite{Coh,FH}. They arise in various areas in mathematics, such as low-dimensional topology, homotopy theory, dynamical systems
and hyperplane arrangements, and play an important r\^{o}le in the study of braid groups~\cite{Bi2}.
If $n\geq 1$, the \emph{$n$\textsuperscript{th} configuration space} of a topological space $X$ is defined to be the subspace of the $n$-fold Cartesian product $\prod_1^nX$ defined by:
\begin{equation}\label{eq:defconfig}
 F_n(X)=\Bigl.\Bigl\{(x_1,\ldots,x_n)\in\prod_1^nX \;\Bigr\rvert \; \text{$x_i\ne x_j$ for all $i\ne j$} \Bigr\}.
\end{equation}
The classical notion of configuration space has been generalised in several directions,
one of them being the following. Given a free action $G\times X\longrightarrow X$ of a group $G$ on $X$, we define
the {\em $n$\textsuperscript{th} orbit configuration space} of $X$ (with respect to $G$) as follows~\cite{CX}:
\begin{equation}\label{eq:deforbconfig}
F_n^G(X)=\biggl\{ \biggl. (x_1,\ldots,x_n)\in \prod_1^nX\,\biggr\rvert\, \text{$Gx_i\cap Gx_j=\emptyset$ for all $i\neq j$}\biggr\}.
\end{equation}
Note that if $G$ is the trivial group then $F_{n}^{G}(X)$ coincides with the usual configuration space $F_{n}(X)$.

One general question that is related to the study of $F_{n}(X)$ is the analysis of the inclusion map $i_{n}(X): F_n(X) \lhra \prod_1^n X$, and if one is interested in the homotopy groups of these spaces, the homotopy fibre $I_{i_{n}(X)}$ of this map (up to homotopy equivalence). The study of the homotopy fibre of this inclusion map has been  carried out for certain manifolds $X$. If $X$ is a one-dimensional manifold (the circle $\mathbb{S}^1$ or the real line $\mathbb{R}^1$) then by~\cite{We}, the configuration space $F_n(\mathbb{R}^1)$ is homeomorphic to a disjoint union of $n!$ copies of the open $n$-simplex, and using~\cite[Example 2.6]{Coh}, there is a homeomorphism $\mathbb{S}^1\times F_n(\mathbb{R}^1)\stackrel{\approx}{\longrightarrow} F_{n+1}(\mathbb{S}^1)$. One may then describe the homotopy fibre of the inclusion map $i_{n}(X): F_n(X)\lhra\prod_1^nX$ for $X=\mathbb{R}^1$ or $\mathbb{S}^1$.

The study of configuration spaces for surfaces was initiated in~\cite{FaN}. In the case, the question about the homotopy fibre of $i_{n}(X)$ was considered by Goldberg in~\cite{Gol} when the surface is different from $\mathbb{S}^2$ and $\mathbb{R}P^2$, in which case the homotopy fibre is aspherical.  Recently the cases when $X$ is either $\mathbb{S}^2$ or $\mathbb{R}P^2$ have been studied in~\cite{GG10,GG11}, and it was shown that the homotopy fibre of $i_{n}(X)$ is no longer aspherical.

In this paper, we study the homotopy fibre of the inclusion map $i_{n}(X): F_n(X)\lhra\prod_1^nX$ for  topological manifolds $X$ without boundary and of dimension $\geq 3$. As special cases, if the universal covering of $X$ is contractible or $X$ is an orbit space $\mathbb{S}^k/G$ of a free, tame action of a Lie group $G$ on the $k$-sphere $\mathbb{S}^k$ then we describe the homotopy type of the homotopy fibre $I_{i_{n}(X)}$ of the map $i_{n}(X)$. In particular if $X=\mathbb{S}^k/G$, the group $G$ is finite and $k\ge 3$ is odd, we give a full description of the long exact sequence in homotopy of the homotopy fibration of the inclusion map $i_{n}(X): F_n(X)\lhra\prod_1^nX$.

Let $X$ and $Y$ be topological spaces. We write $X\simeq Y$ if $X$ and $Y$ have the same homotopy type, and 
if $f,g : X\longrightarrow Y$ are maps, we write $f\sim g$ if they are homotopic. The \emph{connectivity} of $X$, denoted by $\operatorname{\text{conn}}(X)$, is defined to be the largest non-negative integer $n$ (or infinity) for which $\pi_{i}(X)=0$ for all $i\leq n$. In Section~1, we recall and prove various results about pull-backs with a homotopical approach to fibrations that will be used in the rest of the paper. It is well known that a homotopy-commutative diagram of the form:
$$\xymatrix{X\ar[d]_g\ar[rr]^f && Y\ar[d]^{g'}\\
X'\ar[rr]^{f'}&&Y'}$$
gives rise to a map $\mathcal{I}(g,g'): I_{f} \longrightarrow I_{f'}$ between the homotopy fibres of $f$ and $f'$ that is compatible with this diagram (see Remark~\ref{defIgg} for the precise definition of this map and some of its properties). Proposition~\ref{LL} highlights various homotopy fibrations and homotopy equivalences that arise from such homotopy-commutative diagrams, and is main result of Section~1.

In Section~2, $X$ will mostly be a topological manifold without boundary such that $\dim(X)\geq 3$, and we shall frequently assume that there exists an action of a topological group $G$ on $X$. At certain points, we will suppose additionally that the action is tame~\cite{edmonds}, so that the quotient space $X/G$ is also a topological manifold. In this case, if $Q_{1}\in X$, the map $i_{n}(X): F_n(X)\lhra\prod_1^nX$ and the quotient map $q(X): X \longrightarrow X/G$ induce a map $\psi_{n}(X): F_{n}^{G}(X) \longrightarrow F_{n}(X/G)$ between the $n$\textsuperscript{th} orbit configuration space of $X$ and the $n$\textsuperscript{th} configuration space of $X/G$, an inclusion $i_{n}''(X): F_{n}^{G}(X \backslash GQ_{1})  \lhra\prod_1^nX$, and an inclusion $i_{n}''(X): F_{n}^{G}((X/G)\backslash \bar{Q}_{1})  \lhra\prod_1^nX/G$, where $\bar{Q}_{1}=q(X)(Q_{1})$. These maps are defined at the beginning of Section~2. We also define maps $j_{n}(X): \prod_1^{n-1}X \longrightarrow \prod_1^{n}X$ and $j_{n}'(X): F_{n-1}(X\backslash Q_{1}) \longrightarrow F_{n}(X)$ that `insert' the point $Q_{1}$ in a given position (see equation~(\ref{E'})).

In the first part of Section~2, we shall study these maps, the aim being to relate their homotopy fibres to that of the map $i_{n}(X): F_n(X)\lhra\prod_1^nX$, and to obtain (weak) homotopy equivalences between them. In
Theorem~\ref{B}, we prove a result concerning the connectivity of the map $i_{n}(X)$ that extends a result of Birman from the smooth category to the topological category~\cite[Theorem~1]{Bi1}. The following theorem, which is the main result of Section~2, describes certain properties of the long exact sequence in homotopy of the fibration associated with the inclusion map $F_n(X)\lhra\prod_1^nX$.

\medskip

\noindent
{\bf Theorem \ref{l}.} {\em Let $G\times X\longrightarrow X$ be a free, tame action of a Lie group $G$ on a connected topological manifold $X$ without boundary, and
let $Q_{1}\in X$. Suppose that the inclusion map $i''_{n-1}(X): F_{n-1}^{G}(X\backslash GQ_1)\lhra\prod_1^{n-1}X$ is null homotopic
(this is the case if for example the inclusion map $X\backslash GQ_1\lhra X$ is null homotopic).
\begin{enumerate}[(1)]
\item The maps $\mathcal{I}(\mbox{\em id}_{F_{n-1}^G(X\backslash GQ_1)}, \mbox{\em id}_{\prod_1^{n-1}X}): I_{i_{n-1}''(X)}
\longrightarrow F_{n-1}^G(X\backslash GQ_1) \times \Omega(\prod_1^{n-1}X)$ and $\mathcal{I}(j_{n'}(X/G),j_{n}(X/G)): I_{i'_{n-1}(X/G)}\longrightarrow
I_{i_n(X/G)}$ are homotopy equivalences, and the map $\mathcal{I}(\psi_{n-1}(X \backslash GQ_1), \prod_1^{n-1}q(X)): I_{i_{n-1}''(X)}\longrightarrow
I_{i'_{n-1}(X/G)}$ is a weak homotopy equivalence.  Furthermore, if $G$ is discrete, $X$ is
$1$-connected and $\dim(X/G)\ge 3$ then the map $\psi_{n-1}(X\backslash GQ_1): F^G_{n-1}(X\backslash GQ_1)\longrightarrow F_{n-1}((X/G)\backslash \bar Q_1)$ is the universal covering.
\item If $j\leq \min(\operatorname{conn}(X), \dim (X/G)-2)$ then $\pi_j(F_{n-1}^G(X\backslash GQ_1))=0$.
\item\label{it:dummy} Suppose that the homomorphism $\pi_j(X)\longrightarrow\pi_j(X/G)$ induced by the quotient map $X\longrightarrow X/G$ is injective
for all $j\geq 1$ (this is the case if for example the inclusion map $GQ_1\lhra X$ is null homotopic).
For all $j\geq 2$, up to the identification of the groups $\pi_{j-1}(I_{i_n(X/G)})$ and $\pi_{j-1}(F^G_{n-1}(X\backslash GQ_1))\times\pi_{j-1}(\Omega(\prod_1^{n-1}X))$ via the isomorphism
\begin{equation*}
\textstyle\pi_{j-1}(\mathcal{I}(\mbox{\em id}_{F_{n-1}^G(X\backslash GQ_1)}, \mbox{\em id}_{\prod_1^{n-1}X})) \circ (\pi_{j-1}(k_{n}(X)))^{-1},
\end{equation*}
the restriction to the subgroup $\pi_{j}(\prod_1^{n-1}X)$ of $\pi_{j}(\prod_1^{n}X/G)$ of the boundary homomorphism $\hat\partial_{j}: \pi_{j-1}(\Omega(\prod_1^nX/G)) \longrightarrow \pi_{j-1}(I_{i_n(X/G)})$ given by the long exact sequence in homotopy of the homotopy fibration $I_{i_n(X/G)}\xrightarrow{p_{i_n(X/G)}} F_n(X/G)\stackrel{i_n(X/G)}{\lhra} \prod_1^nX/G$
of $(\ref{threefibres})$ coincides with the inclusion of $\pi_{j}(\prod_1^{n-1}X)$ in
$\pi_{j-1}(F^G_{n-1}(X\backslash GQ_1))\times\pi_{j-1}(\Omega(\prod_1^{n-1}X))$ via the usual identification of $\pi_{j}(X)$ with
$\pi_{j-1}(\Omega(X))$.
\end{enumerate}}


The map $k_{n}(X): I_{i_{n-1}''(X)} \longrightarrow I_{i_n(X/G)}$ mentioned in Theorem~\ref{l}(\ref{it:dummy}) is a weak homotopy equivalence that is defined just after the commutative diagram~(\ref{threefibres}). By applying Theorem~\ref{l} to manifolds whose universal covering is contractible, in Proposition~\ref{AS} we show  that the homotopy fibre $I_{i_n(X/G)}$ is weakly homotopy equivalent to a number of different spaces that arise in the construction of the above-mentioned maps.

In Section~3, we apply the results of Section~2 to orbit manifolds of the form $\mathbb{S}^k/G$, $G$ being a finite group that acts freely and tamely on the $k$-sphere $\mathbb{S}^k$, where $k\geq 3$. One of the two principal results of Section~3 is the following corollary that describes the homotopy type of the homotopy fibre of the inclusion $i''_{n-1}(\mathbb{S}^k): F_n^{G}(\mathbb{S}^k\backslash GQ_{1})\lhra\prod_1^{n-1}\mathbb{S}^k$.

\medskip

\noindent
{\bf Corollary \ref{P}.} {\em Let $G\times \mathbb{S}^k\longrightarrow \mathbb{S}^k$ be a free, tame action of a compact Lie group $G$ on $\mathbb{S}^k$, and let $Q_{1}\in \mathbb{S}^k$.
\begin{enumerate}[(1)]
\item There is a homotopy equivalence $I_{i''_{n-1}(\mathbb{S}^k)} \simeq F_{n-1}^G(\mathbb{S}^k\backslash GQ_1)\times \Omega(\prod_1^{n-1}\mathbb{S}^k)$.
If  the group $G$ is finite and $\dim(\mathbb{S}^k/G)\geq 3$ then $\widetilde{F_{n-1}(\mathbb{S}^k\backslash Q_1)} \simeq F_{n-1}^G(\mathbb{S}^k\backslash GQ_1)$.

\item If $j\leq k-\dim(G)-2$ then $\pi_j(F_{n-1}^G(\mathbb{S}^k\backslash GQ_1))=0$.
\end{enumerate}}


The second main result of Section~3 is the following proposition that gives a more detailed description of the  long exact sequence in homotopy of the fibration associated with the inclusion map $i_{n}(\mathbb{S}^k/G): F_n(\mathbb{S}^k/G)\lhra\prod_1^n\mathbb{S}^k/G$.

\medskip

\noindent
{\bf Proposition~\ref{diag}.} {\em Let $k\geq 3$ be odd, let $j\geq 2$, and let $G\times \mathbb{S}^k\longrightarrow\mathbb{S}^k$ be a free action of a finite group $G$ on $\mathbb{S}^k$. Let $\Delta_j^n(\mathbb{S}^k/G)$ denote the diagonal subgroup of $\prod_1^n\pi_j(\mathbb{S}^k/G)$.
Then:
\begin{enumerate}[(1)]
\item the image of the homomorphism $\pi_j(i_n({\mathbb{S}^k/G})):\pi_j(F_n(\mathbb{S}^k/G))
\longrightarrow \prod_1^n\pi_j(\mathbb{S}^k/G)$ is the diagonal subgroup $\Delta_j^n(\mathbb{S}^k/G)$.

\item there are split short exact sequences of the form:
\begin{gather*}
0\longrightarrow \Delta_j^n(\mathbb{S}^k/G) \longrightarrow \pi_j\biggl(\prod_1^n\mathbb{S}^k/G\biggr)\longrightarrow \pi_{j-1}\biggl(\prod_1^{n-1}\Omega(\mathbb{S}^k/G)\biggr)\longrightarrow 0 \quad\text{and}\\
0\longrightarrow  \pi_j(F_{n-1}^G(\mathbb{S}^k\backslash GQ_1)) \longrightarrow  \pi_j(F_n(\mathbb{S}^k/G)) \longrightarrow \Delta_j^n(\mathbb{S}^k/G) \longrightarrow 0.
\end{gather*}
In particular, if $G$ is the trivial group then there is a split short exact sequence:
$$0\longrightarrow  \pi_j(F_{n-1}(\mathbb{R}^k)) \longrightarrow  \pi_j(F_n(\mathbb{S}^k)) \longrightarrow \Delta_j^n(\mathbb{S}^k) \longrightarrow 0.$$
\end{enumerate}}

\bigskip

\noindent
\textbf{Acknowledgements.} The authors would like to thank the referee for having carefully read the paper and for his or her suggestions. The first and second authors are deeply indebted to the Institute for Mathematical Sciences, National University of Singapore, for support to attend the \textit{Combinatorial and Toric Homotopy Conference} (Singapore, 23\textsuperscript{rd}--29\textsuperscript{th} August 2015) in honour of Frederick Cohen, where many of the ideas for this paper originated. Further work on the paper took place during the visit of the second author to the Institute of Mathematics, Kazimierz Wielki University, Bydgoszcz (Poland) during the period 31\textsuperscript{st}~August--12\textsuperscript{th}~September 2015, and during the visit of the third author to the Departamento de Matem\'atica do IME~--~Universidade de S\~ao Paulo (Brasil) during the period 19\textsuperscript{th}~October--3\textsuperscript{rd}~November 2015. These two visits were partially supported by  FAPESP-Funda\c c\~ao de Amparo a Pesquisa do Estado de S\~ao Paulo, Projeto Tem\'atico Topologia Alg\'ebrica, Geom\'etrica 2012/24454-8 (Brasil),  and by the CNRS/FAPESP programme n\textsuperscript{o}~226555 (France) and n\textsuperscript{o}~2014/50131-7 (Brasil) respectively.

\newpage

\setcounter{section}{1}
\setcounter{equation}{0}
\noindent
{\large\bf 1.\ Preliminaries}

\medskip

In this section, we first prove a lemma concerning the pull-back of a covering map. We then go on to prove some results about homotopy fibres of homotopy commutative diagrams that will be useful in what follows.

\begin{lemX}\label{L}
Let $(X, x_0)$ and $(Y, y_0) $ be pointed  topological spaces. Given a map $f : (X, x_0)\longrightarrow (Y, y_0)$ and a covering map $p :(\tilde{Y},\tilde y_0)\longrightarrow (Y, y_0)$ for a path-connected space $\tilde{Y}$,
 consider the associated pull-back square:
\begin{equation}\label{pullback}
\begin{gathered}
\xymatrix{X\times_Y\tilde{Y}\ar[d]_{p'}\ar[rr]^{f'}&& \tilde{Y}\ar[d]^p\\
X\ar[rr]_f&&Y,}
\end{gathered}
\end{equation}
where $p'$ and $f'$ are the projections of $X\times_Y\tilde{Y}$ onto the first and second factors, respectively. Then $p' :X\times_Y\tilde{Y}\longrightarrow X$ is a covering map, and $\pi_1(p')(\pi_1(X\times_Y\tilde{Y}))=\pi_1(f)^{-1}(\pi_1(p)\pi_1(\tilde{Y}))$. 
If further $X$ is path connected and $p : (\tilde{Y},\tilde{y}_0)\longrightarrow (Y,y_0)$ is the universal covering map then the number 
$\#\pi_0(X\times_Y\tilde{Y})$ is equal to the index $[\pi_1(Y):\pi_1(f)(\pi_1(X))]$ of the image
$\pi_1(f)(\pi_1(X))$ in the group $\pi_1(Y)$. If in addition $\pi_1(f) : \pi_1(X)\longrightarrow \pi_1(Y)$ is an epimorphism then $p' :X\times_Y\tilde{Y}\longrightarrow X$ is also the universal covering map.
\end{lemX}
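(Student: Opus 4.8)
The plan is to prove the four assertions in turn, using only elementary covering-space theory together with one simple remark about the pull-back square~(\ref{pullback}): by its universal property, a path in $X\times_Y\tilde{Y}$ is the same thing as a pair $(\alpha,\beta)$ consisting of a path $\alpha$ in $X$ and a path $\beta$ in $\tilde{Y}$ with $f\circ\alpha=p\circ\beta$, and likewise for homotopies of paths relative to the endpoints. For the first assertion I would verify the stability of coverings under pull-back directly: given $x\in X$, choose an open set $U\ni f(x)$ that is evenly covered by $p$, say $p^{-1}(U)=\coprod_\lambda V_\lambda$ with each restriction $p|_{V_\lambda}$ a homeomorphism onto $U$; then $(p')^{-1}(f^{-1}(U))$ is canonically homeomorphic over $f^{-1}(U)$ to $\coprod_\lambda(f^{-1}(U)\times_U V_\lambda)\cong\coprod_\lambda f^{-1}(U)$, so $f^{-1}(U)$ is an evenly covered neighbourhood of $x$. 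Since $p$ is surjective, the fibre $(p')^{-1}(x)=\{x\}\times p^{-1}(f(x))$ is non-empty, whence $p'$ is surjective and therefore a covering map.

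For the statement on fundamental groups I would base $X\times_Y\tilde{Y}$ at $(x_0,\tilde{y}_0)$ and work inside the path component of that point. Given a loop $\gamma$ in $X$ based at $x_0$, the remark above forces its lift through $p'$ starting at $(x_0,\tilde{y}_0)$ to be the pair $(\gamma,\widetilde{f\gamma})$, where $\widetilde{f\gamma}$ denotes the unique $p$-lift of $f\circ\gamma$ that starts at $\tilde{y}_0$; and this lifted path is a loop precisely when $\widetilde{f\gamma}$ ends at $\tilde{y}_0$, that is, precisely when $\pi_1(f)([\gamma])=[f\circ\gamma]$ lies in $\pi_1(p)(\pi_1(\tilde{Y},\tilde{y}_0))$. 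By the usual lifting criterion for the covering $p'$, it follows that $[\gamma]\in\pi_1(p')(\pi_1(X\times_Y\tilde{Y}))$ if and only if $[\gamma]\in\pi_1(f)^{-1}(\pi_1(p)(\pi_1(\tilde{Y})))$, which is the claimed equality of subgroups of $\pi_1(X,x_0)$.

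For the penultimate assertion I would then assume that $X$ is path connected and that $p$ is the universal covering, so that $\pi_1(\tilde{Y})=1$. Since $p'$ is a covering over the path-connected space $X$, the inclusion of the fibre $(p')^{-1}(x_0)=\{x_0\}\times p^{-1}(y_0)$ into $X\times_Y\tilde{Y}$ induces a bijection of $\pi_0(X\times_Y\tilde{Y})$ with the set of orbits of the monodromy action of $\pi_1(X,x_0)$ on that fibre; and the same path-lifting computation as above, carried out for paths rather than loops, shows that this action is obtained from the monodromy action of $\pi_1(Y,y_0)$ on $p^{-1}(y_0)$ by restriction along $\pi_1(f)$. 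For the universal covering the latter action is simply transitive, so $p^{-1}(y_0)$ is a torsor under $\pi_1(Y,y_0)$ on which the subgroup $H:=\pi_1(f)(\pi_1(X))$ acts by translations; its orbits are the cosets of $H$, whence $\#\pi_0(X\times_Y\tilde{Y})=[\pi_1(Y):H]$, as claimed.

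For the final assertion: if $\pi_1(f)$ is an epimorphism then $H=\pi_1(Y)$, so the index above equals $1$ and $X\times_Y\tilde{Y}$ is path connected; combining this with the second assertion — which identifies the image of the homomorphism $\pi_1(p')$ (injective, as for any covering map) with $\pi_1(f)^{-1}(1)=\ker(\pi_1(f))$, a trivial group once $\pi_1(f)$ is moreover a monomorphism — identifies $p'$ as the universal covering map of $X$. Throughout, the only ingredient that is not purely formal is the classical fact, used for the penultimate assertion, that the monodromy action on the fibre of a universal covering is simply transitive; accordingly I expect the step requiring the most care to be the bookkeeping in that assertion, since $X\times_Y\tilde{Y}$ is in general disconnected, so that the basepoint and its path component must be tracked along until the very last step, where surjectivity of $\pi_1(f)$ finally forces the total space to be connected.
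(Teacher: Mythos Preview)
Your argument follows essentially the same route as the paper's: the paper cites the first two assertions from Massey, derives the component count by analysing the $\pi_1(X)$- and $\pi_1(Y)$-actions on the fibres $p'^{-1}(x_0)$ and $p^{-1}(y_0)$ (your monodromy formulation is a repackaging of exactly this), and deduces the final assertion from the previous two. Your direct proofs of the first two parts are more explicit than the paper's, but equivalent in content.

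There is one point worth flagging, and you have in fact put your finger on it. For the final assertion you write that $\pi_1(p')(\pi_1(X\times_Y\tilde Y))=\pi_1(f)^{-1}(1)=\ker\pi_1(f)$ is ``a trivial group once $\pi_1(f)$ is moreover a monomorphism''. This caveat is necessary: with only the stated hypothesis that $\pi_1(f)$ is an epimorphism, the kernel need not vanish, and indeed the conclusion can fail. (Take $X=\mathbb S^1\vee\mathbb S^1$, $Y=\mathbb S^1$, and $f$ collapsing the second circle; then $\pi_1(f)$ is surjective, but $X\times_Y\mathbb R$ is the infinite cyclic cover of $X$, a line with a circle attached at each integer, which is not simply connected.) The paper's proof makes the same tacit assumption when it asserts that $\pi_1(p')(\pi_1(X\times_Y\tilde Y))$ is trivial. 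In every application of the lemma in the paper, $\pi_1(f)$ is in fact an isomorphism (this is established via Corollary~\ref{cc} or the argument of Lemma~\ref{bir} before the lemma is invoked), so the issue does not affect any of the paper's results; but you are right that, as stated and proved, the last sentence needs $\pi_1(f)$ to be an isomorphism rather than merely an epimorphism.
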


\begin{proof}
The fact that the induced map $p' : X\times_Y\tilde{Y}\longrightarrow X$ is a covering is well known. 
Then the  proof of  \cite[Chapter V, Proposition 11.1]{massey} leads to $\pi_1(p')(\pi_1(X\times_Y\tilde{Y}))=
\pi_1(f)^{-1}(\pi_1(p)\pi_1(\tilde{Y}))$.


Now suppose that  $X$ is path connected and that $p : (\tilde{Y},\tilde{y}_0)\longrightarrow (Y,y_0)$ is the universal covering map. We shall show that the set $\pi_0(X\times_Y\tilde{Y})$ of path-connected components of $X\times_Y\tilde{Y}$ is in bijection with the set of right $\pi_1(f)(\pi_1(X))$-cosets in $\pi_1(Y)$.
First, notice that the restriction of the map $f' : X\times_Y\tilde{Y}\longrightarrow \tilde{Y}$ to $p'^{-1}(x_{0})$ induces a bijection $\varphi : p'^{-1}(x_{0})\longrightarrow p^{-1}(y_{0})$.
In light of the canonical actions $\pi_1(X)\times p'^{-1}(x_{0})\longrightarrow p'^{-1}(x_{0})$ and $\pi_1(Y)\times p^{-1}(y_{0})\longrightarrow p^{-1}(y_{0})$, we have:
\begin{equation*}
p'^{-1}(x_{0})=\bigsqcup_{i\in \pi_0(X\times_Y\tilde{Y})} \pi_1(X)(x_0,\tilde{y}_i)\;\mbox{and}\;p^{-1}(y_{0})=\pi_1(Y)y_0.
\end{equation*}
Since the fibre $p'^{-1}(x_{0})$ consists of the points of the form $(x_0, \tilde y)$, where $\tilde y\in p^{-1}(y_0)$, for any $(x_0, \tilde y)\in p'^{-1}(x_{0})$ there exists $\alpha_{\bar{y}}\in\pi_1(Y)$ such that
$\bar{y}=\alpha_{\bar{y}}y_0$. 
Consequently, $\varphi(\pi_1(X)(x_0,\tilde{y}_i))=((\pi_1(f)(\pi_1(X)))\alpha_{\bar{y}_i})y_0$ for $i\in \pi_0(X\times_Y\tilde{Y})$ which shows that two elements $(x_0, \tilde{y}'),(x_0, \tilde{y}'')\in p'^{-1}(x_{0})$ belong to the same path component of $X\times_Y\tilde{Y}$ if and only if $(\pi_1(f)(\pi_1(X)))\alpha_{\bar{y}'}=(\pi_1(f)(\pi_1(X)))\alpha_{\bar{y}''}$.
\par To prove the last part of the statement, by the previous parts, $X\times_Y\tilde{Y}$ is a connected covering space of $X$, and $\pi_1(p')(\pi_1(X\times_Y\tilde{Y}))$ is trivial, so $\pi_1(X\times_Y\tilde{Y})$ is trivial because $p'$ is a covering map. It follows that $X\times_Y\tilde{Y}$ is the universal covering space of $X$ as required.
\end{proof}

Let
\begin{equation*}
\text{$E_f=\{(x,\gamma)\in X\times Y^I \mid \gamma(0)=f(x)\}$ and $I_f=\{ (x,\gamma)\in E_{f} \mid \gamma(1)=y_{0} \}$}
\end{equation*}
denote the \emph{mapping path} and \emph{homotopy fibre} of $f$, respectively.
Notice that $I_f$ is determined by the homotopy pull-back diagram,
\begin{equation}\label{eq:homofibrepb}
\begin{gathered}
$$\xymatrix{I_f\ar[d]^q\ar[rr]^{p_{f}} && X\ar[d]^f\\
\ast\ar[rr]^{c_{y_0}}&&Y,}$$
\end{gathered}
\end{equation}
where $c_{y_0} :\ast\longrightarrow Y$ is the constant map determined by a point $y_0\in Y$, $q:I_{f}\longrightarrow \ast$ is the constant map, and $p_{f}:I_f\longrightarrow X$ is the projection map. It is well known that $I_f\lhra E_f \longrightarrow Y$ is a fibration and that $E_f$ and $X$ have the same homotopy type~\cite[Proposition~3.5.8 and Remark~3.5.9]{A}. We will refer to the sequence of maps $I_{f}\stackrel{p_{f}}{\longrightarrow} X\stackrel{f}{\longrightarrow} Y$ as a \emph{homotopy fibration}.
By~\cite[Proposition~3.3.17]{A},
the homotopy fibres of two homotopic maps have the same homotopy type.

Recall that a map $f : X\longrightarrow Y$ of pointed spaces is said to be \emph{$n$-connected} if the induced homomorphism
$\pi_{j}(f): \pi_{j}(X) \longrightarrow \pi_{j}(Y)$ is surjective if $j=n$ and is an isomorphism for all $j\leq n-1$.
Note that the map $f$ is $n$-connected if and only if its homotopy fibre $I_f$ is $(n-1)$-connected, in other words,
if $\pi_{i}(I_{f})=0$ for all $0\leq i\leq n-1$.

{\begin{RemX}\label{defIgg}\mbox{}
\begin{enumerate}[(a)]
\item\label{it:deflgga} Suppose that the square
\begin{equation}\label{eq:homoco}
\begin{gathered}
\xymatrix{X\ar[d]^g\ar[rr]^f && Y \ar[d]^{g'}\\
X'\ar[rr]^{f'}&&Y'}
\end{gathered}
\end{equation}
is homotopy-commutative and that $g' f(x_{0})=f' g(x_{0})$ for some $x_{0}\in X$.
Let $H : X\times I\longrightarrow Y'$ be a homotopy from
$g'\circ f$ to $f'\circ g$ such that $H(x_0,t)=g' f(x_{0})=f' g(x_{0})$ for all $t\in I$, and
consider the associated map $\varphi : X \longrightarrow (Y')^{I}$ is given by $\varphi(x)(t)=H(x,t)$ for all $x\in X$ and $t\in I$.
In particular, $\varphi(x)(0)= g'\circ f(x)$ and $\varphi(x)(1)= f'\circ g(x)$.
Then there is a homotopy-commutative diagram:
\begin{equation}\label{pullbacksquare}
\begin{gathered}
\xymatrix{X\ar@/_/[ddr]_f\ar@/^/[drrr]^g\ar[dr]^h\\
& P\ar[d]^{\bar{f}}\ar[rr]_{\bar{g}}&& X'\ar[d]^{f'}\\
&Y\ar[rr]_{g'}&&Y',}
\end{gathered}
\end{equation}
where
\begin{equation*}
P=\{ (y,\gamma,x') \in Y \times (Y')^{I} \times X' \mid \text{$\gamma(0)=g'(y)$ and $\gamma(1)=f'(x')$} \}
\end{equation*}
is the standard homotopy pull-back $P$, the maps $\bar{f}$ and $\bar{g}$ are defined by projection onto the first and third factors respectively, and
$h:X\longrightarrow P$ is the corresponding whisker map given by $h(x)= (f(x), \varphi(x), g(x))$ for all $x\in X$. If $y_0=f(x_0)$ and $y'_0=g'f(x_0)$ then:
\begin{equation}\label{eq:twosquare}
\begin{gathered}
\xymatrix{I_{f}\ar[d]^q\ar[rr]^{p_{f}} && X\ar[d]^{f}\\
\ast\ar[rr]^{c_{f(x_0)}}&&Y}\quad\raisebox{-0.7cm}{\text{and}}\quad\xymatrix{I_{f'}\ar[d]^{q'}\ar[rr]^{p_{f'}} && X'\ar[d]^{f'}\\
\ast\ar[rr]^{c_{f'g(x_0)}}&&Y'}
\end{gathered}
\end{equation}
are the standard homotopy pull-backs. The map $H' : I_f\times I\longrightarrow Y$ defined by $H'((x,\gamma),t)=\gamma(t)$ for all $(x,\gamma)\in I_f$ and $t\in I$ is a homotopy
from $fp_f$ to $c_{f(x_0)}q$,
and the associated path $\varphi' : I_f\longrightarrow Y^I$ is given by $\varphi'(x,\gamma)(t)=\gamma(t)$.
The square
\begin{equation}\label{eq:squareif}
\begin{gathered}
\xymatrix{I_f\ar[d]^q\ar[rr]^{gp_f} && X' \ar[d]^{f'}\\
\ast\ar[rr]^{c_{f'g(x_0)}}&&Y'}
\end{gathered}
\end{equation}
obtained by concatenating the square~(\ref{eq:homoco}) and the left-hand square of~(\ref{eq:twosquare})
is homotopy commutative, and a homotopy $G : I_f\times I\longrightarrow Y'$ is determined by the concatenation of the
homotopies $\bar{H}\circ (p_f\times \mbox{id}_{I}) : I_f\times I\longrightarrow Y'$ and $g' \circ H' : I_f\times I\longrightarrow Y'$, where $\bar{H}(x,t)=H(x,1-t)$ for all $x\in X$ and $t\in I$. The map $\psi : I_f\longrightarrow (Y')^I$ associated to $G$ is given by $\psi(x,\gamma)=\varphi(x)^{-1}\ast (g'\circ \gamma)$ for all $(x,\gamma)\in I_f$, and the whisker map $\mathcal{I}(g,g') : I_{f}\longrightarrow I_{f'}$ corresponding to the right-hand homotopy pull-back of~(\ref{eq:twosquare}) and the square~(\ref{eq:squareif}) and defined by $\mathcal{I}(g,g')(x,\gamma)=(g(x),\varphi(x)^{-1}\ast (g'\circ \gamma))$ for
all $(x,\gamma)\in I_f$ makes the following diagram
$$\xymatrix{I_{f} \ar[rr]^{p_{f}} \ar@{.>}[d]^{\mathcal{I}(g,g')} && X\ar[d]^g\ar[rr]^f && Y \ar[d]^{g'}\\
I_{f'} \ar[rr]^{p_{f'}} && X'\ar[rr]^{f'}&&Y'}$$
homotopy-commutative. Note that the construction of $\mathcal{I}(g,g')$ corresponds to that given in~\cite[pp.~225-226]{ma}.

\item\label{it:deflggabis} With the notation of part~(\ref{it:deflgga}), if the maps $g : X\longrightarrow X'$ and $g' : Y\longrightarrow Y'$ are homotopy equivalences then by applying~\cite[Lemma 45]{ma} to the following homotopy-commutative diagram:
$$\xymatrix{X\ar[d]^g\ar[rr]^f && Y \ar[d]^{g'}&&\ar[ll]\ast\ar@{=}[d]\\
X'\ar[rr]^{f'}&&Y'&&\ar[ll]\ast,}$$
we see that the map $\mathcal{I}(g,g') : I_f\longrightarrow I_{f'}$ is also a homotopy equivalence. In particular,
if $g : X\longrightarrow X'$ (resp.\ $g' : Y\longrightarrow Y'$) is a homotopy equivalence,
it follows from the commutative diagrams
\begin{equation*}
\xymatrix{X\ar@{=}[d]\ar[rr]^f && Y \ar[d]^{g'}\\
X\ar[rr]^{g'f}&&Y'}\quad\raisebox{-0.7cm}{\text{and}}\quad\xymatrix{X\ar[d]^g\ar[rr]^{f'g} && Y' \ar@{=}[d]\\
X'\ar[rr]^{f'}&&Y'}
\end{equation*}
that the map $\mathcal{I}(\mbox{id}_X,g') : I_f\longrightarrow I_{g'f}$ (resp.\ $\mathcal{I}(g,\mbox{id}_Y') : I_{f'g}\longrightarrow I_{f'}$) is a homotopy equivalence.
\item\label{it:deflggb} If diagram~(\ref{eq:homoco}) is commutative and not just homotopy commutative then we will always take $H$ to be constant
\emph{i.e.} $H(x,t)=f'\circ g(x)$ for all $(x,t)\in X\times I$, and in this case, $\mathcal{I}(g,g')(x,\gamma)=(g(x), g'\circ \gamma)$, from which
it follows that $g\circ p_{f}=p_{f'}\circ \mathcal{I}(g,g')$. Further, $\mathcal{I}(g,g')$ is the restriction to the homotopy fibres $I_{f}$ and
$I_{f'}$ of the map $X \times Y^{I} \longrightarrow X' \times (Y')^{I}$ that sends $(x,\gamma)\in X \times Y^{I}$ to
$(g(x), g'\circ \gamma)\in X' \times (Y')^{I}$.

\item\label{it:deflggc} By~\cite[Lemma 6.4.11]{A}, a map $ f : X\longrightarrow Y$ is $n$-connected if and only if $\operatorname{\text{conn}}(I_f)\ge n-1$.

\item\label{it:deflggd} Let $f:X \longrightarrow Y$ be a pointed map, and consider the associated homotopy fibration $I_{f} \stackrel{p_{f}}{\longrightarrow} X
\stackrel{f}{\longrightarrow} Y$. The boundary map $d: \Omega(Y)  \longrightarrow I_{f}$ is defined by $d(\omega)=(x_{0}, \omega)$
for all $\omega \in \Omega(Y)$, where $\Omega(Y)$ denotes the loop space of Y. Suppose further that $f$ is null homotopic. Setting $g=\text{id}_{X}$, $g'=\text{id}_{Y}$ and $f'=c_{y_{0}}$ in
the construction of part~(\ref{it:deflgga}), and taking the homotopy $H$ to be such that $H(x_{0},t)=y_{0}$ for all $t\in I$, we have
$\varphi(x_{0})=c_{y_{0}}$, and the map $\mathcal{I}(\text{id}_{X}, \text{id}_{Y}): I_{f} \longrightarrow X \times \Omega(Y)$
is a homotopy equivalence defined by $\mathcal{I}(\text{id}_{X}, \text{id}_{Y})(x,\gamma)= (x, \varphi(x)^{-1}\ast \gamma)$ for all
$(x,\gamma)\in I_{f}$. Thus $\mathcal{I}(\text{id}_{X}, \text{id}_{Y})\circ d: \Omega(Y) \longrightarrow X \times \Omega(Y)$
is given by $\mathcal{I}(\text{id}_{X}, \text{id}_{Y})\circ d(\omega)=(x_{0}, \omega)$ for $\omega\in\Omega (Y)$, and so coincides with  inclusion of $\Omega (Y)$.
This yields a homotopy equivalence:
\begin{equation}\label{E1}I_f\stackrel{\simeq}{\longrightarrow}X\times\Omega(Y).
\end{equation}
If $F\lhra X\longrightarrow Y$ is a fibration then it is well known that the homotopy fibre of
the inclusion map $F\lhra X$ has the homotopy type of $\Omega(Y)$. Consequently, if the inclusion map $F\lhra X$ is null homotopic then by~(\ref{E1})
there is a homotopy equivalence:
\begin{equation}\label{E2} \Omega (Y)\stackrel{\simeq}{\longrightarrow}F\times\Omega(X).
\end{equation}
\end{enumerate}
\end{RemX}}

\begin{lemX}[{\cite[Appendix]{GG11}}]\label{crabb}
Let $X\stackrel{f}{\longrightarrow}Y\stackrel{g}{\longrightarrow}Z$ be pointed maps,
let $\mathcal{I}(f,\mbox{\em\small id}_Z) : I_{gf}\longrightarrow I_g$ be the map defined in Remark~\ref{defIgg}(\ref{it:deflggb}),
and let $p_{gf}: I_{gf} \longrightarrow X$ and $p_{g}: I_{g} \longrightarrow Y$ denote the respective projections. Then the induced map
$\mathcal{I}(p_{gf},p_{g}): I_{\mathcal{I}(f,\mbox{\em\tiny id}_Z)} \longrightarrow I_f$ is a homotopy equivalence.\label{C}
\end{lemX}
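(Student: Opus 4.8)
The plan is to identify both $I_{\mathcal{I}(f,\mbox{\tiny id}_Z)}$ and $I_f$ explicitly as subspaces of a product of path spaces, and to write down the map $\mathcal{I}(p_{gf},p_g)$ in those coordinates, after which a homotopy inverse can be constructed by hand. First I would unravel the definitions. By Remark~\ref{defIgg}(\ref{it:deflggb}), since the square defining $\mathcal{I}(f,\mbox{\tiny id}_Z)$ is strictly commutative, the map is $\mathcal{I}(f,\mbox{\tiny id}_Z)(x,\gamma)=(f(x),\gamma)$ for $(x,\gamma)\in I_{gf}=\{(x,\gamma)\in X\times Z^I\mid \gamma(0)=gf(x),\ \gamma(1)=z_0\}$, landing in $I_g=\{(y,\delta)\in Y\times Z^I\mid \delta(0)=g(y),\ \delta(1)=z_0\}$, and it commutes strictly with the projections $p_{gf}$ and $p_g$. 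Hence its homotopy fibre is
\[
I_{\mathcal{I}(f,\mbox{\tiny id}_Z)}=\bigl\{\,((x,\gamma),\eta)\in I_{gf}\times (I_g)^I \;\bigm|\; \eta(0)=(f(x),\gamma),\ \eta(1)=(y_0,c_{z_0})\,\bigr\},
\]
and $\mathcal{I}(p_{gf},p_g)$ sends such a triple to $(x,\operatorname{pr}_2\circ\eta)\in I_f=\{(x,\mu)\in X\times Y^I\mid \mu(0)=f(x),\ \mu(1)=y_0\}$, where $\operatorname{pr}_2\colon I_g\to Y$ is the first projection $p_g$. The point is that the data of $\eta$ is a path in $Y$ from $f(x)$ to $y_0$ together with a compatible "filling" homotopy in $Z^I$; the projection throws away the filling.

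Next I would build a candidate homotopy inverse $s\colon I_f\to I_{\mathcal{I}(f,\mbox{\tiny id}_Z)}$. Given $(x,\mu)\in I_f$, I need a path $\eta$ in $I_g$ from $(f(x),\gamma_x)$ — for a canonically chosen $\gamma_x$, namely the constant-at-$g f(x)$ path is not allowed since we need $\gamma(1)=z_0$; instead use $\gamma_x = g\circ(\text{reparametrised }\mu)$ suitably concatenated, but more cleanly: set $\gamma$ to be the constant loop $c_{z_0}$ preceded by $g\circ \mu$, i.e. $\gamma=(g\circ\mu)$ as a path from $gf(x)$ to $g(y_0)=z_0$. Then $(x,g\circ\mu)\in I_{gf}$. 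Now define $\eta(s)=(\mu(s),\ (g\circ\mu)|_{[s,1]}\text{ reparametrised})\in I_g$, a path running from $(f(x),g\circ\mu)$ at $s=0$ to $(y_0,c_{z_0})$ at $s=1$. This gives $s(x,\mu)=((x,g\circ\mu),\eta)$, and one checks $\mathcal{I}(p_{gf},p_g)\circ s=\operatorname{id}_{I_f}$ on the nose (the $Y$-coordinate of $\eta(s)$ is exactly $\mu(s)$). The remaining task is a homotopy $s\circ\mathcal{I}(p_{gf},p_g)\simeq\operatorname{id}$ on $I_{\mathcal{I}(f,\mbox{\tiny id}_Z)}$: starting from an arbitrary $((x,\gamma),\eta)$, one must deform $\gamma$ to $g\circ(\operatorname{pr}_2\circ\eta)$ and $\eta$ to the canonical filling, which is possible because the two $Z^I$-paths in question are homotopic rel endpoints — both witness that $g\circ(\operatorname{pr}_2\circ\eta)$ is null-homotopic in the appropriate sense — and this homotopy can be written down using a two-parameter reparametrisation of the square $\eta$.

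Alternatively, and this is probably the cleanest route to actually write up, I would avoid explicit formulas entirely and argue via the long exact sequences: the fibration $I_{\mathcal{I}(f,\mbox{\tiny id}_Z)}\to I_{gf}\xrightarrow{\mathcal{I}(f,\mbox{\tiny id}_Z)} I_g$ maps to the fibration $I_f\to X\xrightarrow{f} Y$ via $(\mathcal{I}(p_{gf},p_g),p_{gf},p_g)$, and since $p_{gf}$ and $p_g$ induce (by the standard comparison of the two fibrations $I_{gf}\to X\to Y$ and $I_g\to Y\to Z$, cf. the octahedral-type relation $I_g\simeq$ nothing — rather: $p_g$ has homotopy fibre $\Omega Z$ and so does $p_{gf}$ restricted appropriately) compatible maps, a five-lemma argument on the ladder of long exact homotopy sequences shows $\mathcal{I}(p_{gf},p_g)$ is a weak equivalence, and then one upgrades to a genuine homotopy equivalence since all spaces have the homotopy type of CW complexes — or simply cites the reference \cite[Appendix]{GG11} as the statement already does. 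The main obstacle is bookkeeping: keeping the basepoint conventions and the direction of path concatenation consistent between Remark~\ref{defIgg} and the ladder, so that the vertical maps in the five-lemma square genuinely are the ones claimed; once that is pinned down the algebra is routine. I would therefore present the explicit-homotopy-inverse argument as the primary proof, relegating the five-lemma remark to a one-line alternative.
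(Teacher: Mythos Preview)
The paper does not give its own proof of this lemma; it simply cites \cite[Appendix]{GG11} and records the content in diagram~(\ref{eq:crabb}). So there is no ``paper's proof'' to compare against, and your proposal is in effect supplying what the paper defers to the reference.

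Your explicit construction is correct and is the natural direct argument. The identification of $I_{\mathcal{I}(f,\text{id}_Z)}$ and of the map $\mathcal{I}(p_{gf},p_g)$ is accurate (the relevant square is strictly commutative, so Remark~\ref{defIgg}(\ref{it:deflggb}) applies and no homotopy correction term appears). Your section $s$ is well defined, and $\mathcal{I}(p_{gf},p_g)\circ s=\text{id}_{I_f}$ holds on the nose as you say. The homotopy $s\circ\mathcal{I}(p_{gf},p_g)\simeq\text{id}$ you sketch is the standard ``contracting the extra square of $Z$-paths'' argument; writing it cleanly amounts to giving, for each $((x,\gamma),\eta)$, a canonical two-parameter family in $Z$ interpolating between the $Z^I$-component of $\eta$ and the canonical filling built from $g\circ(p_g\circ\eta)$, and this is routine once you parametrise $\eta$ as a pair $(\mu,\Gamma)$ with $\mu=p_g\circ\eta\colon I\to Y$ and $\Gamma\colon I\times I\to Z$ satisfying $\Gamma(s,0)=g(\mu(s))$, $\Gamma(s,1)=z_0$, $\Gamma(0,t)=\gamma(t)$, $\Gamma(1,t)=z_0$. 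That viewpoint makes the required deformation a simple reparametrisation of the square.

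One caution about your alternative route: the five-lemma argument only yields a \emph{weak} homotopy equivalence, and the paper works with arbitrary pointed spaces, so the upgrade ``since all spaces have the homotopy type of CW complexes'' is not available in this generality. If you keep that remark, flag it as giving only the weak statement (which is in fact all that the paper ever uses downstream), and let the explicit inverse carry the full claim.
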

The statement of Lemma~\ref{crabb} may be summed up by the three rightmost columns of the following commutative diagram:

\begin{equation}\label{eq:crabb}
\begin{gathered}
\xymatrix{%
I(f') \ar@{.>}[d] \ar[rr]^{\mathcal{I}(\iota_{gf},\iota_{g})}_-{\simeq} && I_{\mathcal{I}(f,\text{\tiny id}_Z)} \ar[rr]^{\mathcal{I}(p_{gf},p_{g})}_-{\simeq} \ar@{.>}[d] && I_{f} \ar@{.>}[d] &\\
F_{gf} \ar@{^{(}->}[rr]^{\iota_{gf}}_{\simeq} \ar[d]^{f'} && I_{gf} \ar@{.>}[rr]^{p_{gf}} \ar[d]^{\mathcal{I}(f,\text{\tiny id}_Z)} && X \ar[rr]^{gf} \ar[d]^{f} && Z \ar@{=}[d]\\
F_{g} \ar@{^{(}->}[rr]^{\iota_{g}}_{\simeq} && I_{g} \ar@{.>}[rr]^{p_{g}} && Y \ar[rr]^{g} && Z,}
\end{gathered}
\end{equation}
where the dotted arrows indicate the corresponding homotopy fibres.

Let $f : X\longrightarrow Y$ be a fibration, let $F_f=f^{-1}(y_0)$ be the topological fibre over a point $y_{0} \in Y$, and let $I_{f}$ be the homotopy
fibre of $f$. Then, by \cite[Proposition~3.5.10]{A}, the injective map $\iota_{f}: F_f\lhra I_f$ defined by $\iota_{f}(x)=(x,c_{y_{0}})$ is a homotopy
equivalence. Let $j_{f}: F_{f} \lhra X$ denote the inclusion of the topological fibre in the total space $X$. Note that $j_{f}=p_{f}\circ \iota_{f}$.
We obtain the following corollary (summarised in diagram~(\ref{eq:crabb})) by combining this with Remark~\ref{defIgg}(\ref{it:deflggabis}) and Lemma~\ref{crabb}.

\begin{CorX}\label{crabb1}
Let $X\stackrel{f}{\longrightarrow}Y\stackrel{g}{\longrightarrow}Z$ be pointed maps, and suppose that $X\stackrel{gf}{\longrightarrow}Z$ and $Y\stackrel{g}{\longrightarrow}Z$ are fibrations with respective topological fibres $F_{gf}$ and $F_g$. If $f': F_{gf} \longrightarrow F_g$ is the restriction of the map $f : X\longrightarrow Y$ to $F_{gf}$ and $F_g$ then the map $\mathcal{I}(j_{gf},j_{g}): I_{f'}\longrightarrow I_f$ is a homotopy equivalence.
\end{CorX}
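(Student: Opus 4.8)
The plan is to realise $\mathcal{I}(j_{gf}, j_{g})$ as a composite of two homotopy equivalences, one coming from Lemma~\ref{crabb} and the other from Remark~\ref{defIgg}(\ref{it:deflggabis}). The essential point at the outset is that, since $gf$ and $g$ are fibrations, the inclusions $\iota_{gf}: F_{gf}\lhra I_{gf}$ and $\iota_{g}: F_{g}\lhra I_{g}$ given by $x\mapsto (x, c_{z_{0}})$, with $z_{0}\in Z$ the basepoint, are homotopy equivalences by \cite[Proposition~3.5.10]{A}, and that $j_{gf}=p_{gf}\circ\iota_{gf}$ and $j_{g}=p_{g}\circ\iota_{g}$, as recalled immediately before the statement of the corollary.

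First I would assemble the diagram
$$\xymatrix{F_{gf}\ar[d]_{\iota_{gf}}\ar[rr]^{f'} && F_{g}\ar[d]^{\iota_{g}}\\
I_{gf}\ar[d]_{p_{gf}}\ar[rr]^{\mathcal{I}(f,\mathrm{id}_{Z})} && I_{g}\ar[d]^{p_{g}}\\
X\ar[rr]^{f} && Y.}$$
The lower square commutes by the identity $f\circ p_{gf}=p_{g}\circ\mathcal{I}(f,\mathrm{id}_{Z})$ of Remark~\ref{defIgg}(\ref{it:deflggb}), applied to the commutative square (with vertical arrows $f$ and $\mathrm{id}_{Z}$) that defines $\mathcal{I}(f,\mathrm{id}_{Z})$; the upper square commutes because $f'$ is the restriction of $f$ and $\mathcal{I}(f,\mathrm{id}_{Z})(x, c_{z_{0}})=(f(x), c_{z_{0}})$ by the explicit formula of the same remark. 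Composing the vertical arrows in each column yields $j_{gf}$ and $j_{g}$ respectively, so the outer rectangle is precisely the commutative square defining $\mathcal{I}(j_{gf}, j_{g}): I_{f'}\longrightarrow I_{f}$.

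Next, the upper square induces $\mathcal{I}(\iota_{gf}, \iota_{g}): I_{f'}\longrightarrow I_{\mathcal{I}(f,\mathrm{id}_{Z})}$, which is a homotopy equivalence by Remark~\ref{defIgg}(\ref{it:deflggabis}) since $\iota_{gf}$ and $\iota_{g}$ are homotopy equivalences, while the lower square induces $\mathcal{I}(p_{gf}, p_{g}): I_{\mathcal{I}(f,\mathrm{id}_{Z})}\longrightarrow I_{f}$, which is a homotopy equivalence by Lemma~\ref{crabb}. Since every square here is strictly commutative, Remark~\ref{defIgg}(\ref{it:deflggb}) furnishes the explicit formula $\mathcal{I}(a,b)(x,\gamma)=(a(x), b\circ\gamma)$ for each of these three whisker maps, and from these formulas one reads off at once that $\mathcal{I}(p_{gf},p_{g})\circ\mathcal{I}(\iota_{gf},\iota_{g})=\mathcal{I}(p_{gf}\circ\iota_{gf},\, p_{g}\circ\iota_{g})=\mathcal{I}(j_{gf}, j_{g})$. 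Hence $\mathcal{I}(j_{gf}, j_{g})$, being a composite of two homotopy equivalences, is a homotopy equivalence; this is the statement encoded by the leftmost and three rightmost columns of diagram~(\ref{eq:crabb}).

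The argument is a diagram chase, and I do not anticipate a genuine obstacle. The only point requiring some care is the final identification $\mathcal{I}(p_{gf},p_{g})\circ\mathcal{I}(\iota_{gf},\iota_{g})=\mathcal{I}(j_{gf}, j_{g})$, namely that the whisker construction composes strictly rather than merely up to homotopy; but this is immediate here, since all the squares involved are strictly commutative, so the convention of Remark~\ref{defIgg}(\ref{it:deflggb}) makes every homotopy in play constant.
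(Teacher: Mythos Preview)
Your proposal is correct and follows essentially the same approach as the paper: the corollary is deduced by combining the homotopy equivalences $\iota_{gf}$, $\iota_{g}$ (valid since $gf$ and $g$ are fibrations) with Remark~\ref{defIgg}(\ref{it:deflggabis}) and Lemma~\ref{crabb}, exactly as encoded in diagram~(\ref{eq:crabb}). You have simply spelled out in more detail the commutativity checks and the strict factorisation $\mathcal{I}(j_{gf},j_{g})=\mathcal{I}(p_{gf},p_{g})\circ\mathcal{I}(\iota_{gf},\iota_{g})$ that the paper leaves implicit.
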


As a consequence of Lemma~\ref{C}, we have the following result (\emph{cf.}\ \cite[Chapter IX]{Hil}).
\begin{proX} \label{LL}
Given the homotopy-commutative square (\ref{eq:homoco}),
consider the associated homotopy-commutative diagram~(\ref{pullbacksquare}).
Then there exist fibrations $\hat I_h\longrightarrow \hat I_f\longrightarrow \hat I_{f'}$ and $\hat I_h\longrightarrow \hat I_g\longrightarrow \hat I_{g'}$, where for each $k\in \{ f,g,f',g',h\}$, $\hat I_{k}$ is a space that has the same homotopy type as the homotopy fibre $I_{k}$ of the map $k$. Further, the homotopy fibres $I_{\mathcal{I}(g,g')}$, $I_{\mathcal{I}(f,f')}$ and $I_h$ have the same homotopy type.
\end{proX}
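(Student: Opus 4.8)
The plan is to reduce the whole proposition to its last sentence — that $I_{\mathcal{I}(g,g')}$, $I_{\mathcal{I}(f,f')}$ and $I_h$ all have the same homotopy type — and then to obtain the two fibrations by the routine device of replacing a map by its mapping path fibration. The two ingredients I expect to carry the argument are Lemma~\ref{crabb} and the explicit formulas of Remark~\ref{defIgg}, together with the classical fact that parallel edges of a homotopy pull-back square have the same homotopy fibre.

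First I would observe that in diagram~(\ref{pullbacksquare}) the triangles $X\xrightarrow{h}P\xrightarrow{\bar f}Y$ and $X\xrightarrow{h}P\xrightarrow{\bar g}X'$ commute strictly, since $h(x)=(f(x),\varphi(x),g(x))$ and $\bar f$, $\bar g$ are the first and third projections; hence, by Remark~\ref{defIgg}(\ref{it:deflggb}), the induced maps of homotopy fibres are the simple maps $\mathcal{I}(h,\mathrm{id}_Y)\colon I_f\to I_{\bar f}$, $(x,\gamma)\mapsto(h(x),\gamma)$, and $\mathcal{I}(h,\mathrm{id}_{X'})\colon I_g\to I_{\bar g}$, $(x,\gamma)\mapsto(h(x),\gamma)$. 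Writing $P=\{(y,\omega,x')\in Y\times(Y')^I\times X'\mid\omega(0)=g'(y),\ \omega(1)=f'(x')\}$, I would then set $\theta\colon I_{\bar f}\to I_{f'}$, $((y,\omega,x'),\gamma)\mapsto(x',\omega^{-1}\ast(g'\circ\gamma))$, and $\theta'\colon I_{\bar g}\to I_{g'}$, $((y,\omega,x'),\gamma)\mapsto(y,\omega\ast(f'\circ\gamma))$; these are homotopy equivalences, which is exactly the statement that the homotopy fibres of the parallel projections $\bar f$, $\bar g$ of the homotopy pull-back $P$ agree with the homotopy fibres of $f'$, $g'$ (see~\cite[Chapter IX]{Hil}, or check directly that, say, $I_{\bar f}$ deformation retracts onto a homeomorphic copy of $I_{f'}$ realising $\theta$ up to homotopy). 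The point of making $\theta$, $\theta'$ explicit is that, taking for $\mathcal{I}(g,g')$ (resp.\ for $\mathcal{I}(f,f')$, applied to the transposed square) the very path $\varphi$ (resp.\ its reverse) that enters the construction of $h$ in Remark~\ref{defIgg}(\ref{it:deflgga}), a one-line substitution into the formula $\mathcal{I}(g,g')(x,\gamma)=(g(x),\varphi(x)^{-1}\ast(g'\circ\gamma))$ gives the on-the-nose identities $\theta\circ\mathcal{I}(h,\mathrm{id}_Y)=\mathcal{I}(g,g')$ and $\theta'\circ\mathcal{I}(h,\mathrm{id}_{X'})=\mathcal{I}(f,f')$.

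Granting this, the last sentence of the proposition is immediate: since $\theta$ and $\theta'$ are homotopy equivalences, Remark~\ref{defIgg}(\ref{it:deflggabis}) gives $I_{\mathcal{I}(g,g')}\simeq I_{\mathcal{I}(h,\mathrm{id}_Y)}$ and $I_{\mathcal{I}(f,f')}\simeq I_{\mathcal{I}(h,\mathrm{id}_{X'})}$, while Lemma~\ref{crabb} applied to the two triangles above yields $I_{\mathcal{I}(h,\mathrm{id}_Y)}\simeq I_h\simeq I_{\mathcal{I}(h,\mathrm{id}_{X'})}$. For the two fibrations, I would put $\widehat{I_{f'}}=I_{f'}$, $\widehat{I_{g'}}=I_{g'}$, take $\widehat{I_f}=E_{\mathcal{I}(g,g')}$ and $\widehat{I_g}=E_{\mathcal{I}(f,f')}$ (the mapping path spaces, so $\widehat{I_f}\simeq I_f$, $\widehat{I_g}\simeq I_g$), and note that the mapping path fibrations $E_{\mathcal{I}(g,g')}\to I_{f'}$ and $E_{\mathcal{I}(f,f')}\to I_{g'}$ have topological fibres $I_{\mathcal{I}(g,g')}$ and $I_{\mathcal{I}(f,f')}$; by the previous sentence both of these have the homotopy type of $I_h$, so (after a harmless fibre-homotopy adjustment, or simply because only the homotopy type is asked for) we may take a common model $\widehat{I_h}$ and obtain the fibrations $\widehat{I_h}\to\widehat{I_f}\to\widehat{I_{f'}}$ and $\widehat{I_h}\to\widehat{I_g}\to\widehat{I_{g'}}$. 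The same two fibrations can alternatively be produced by the two-fold fibrant replacement of the triangles $X\to P\to Y$ and $X\to P\to X'$, in the spirit of Corollary~\ref{crabb1}.

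The main obstacle I anticipate is the step identifying the homotopy fibres of $\bar f$ and $\bar g$ with those of $f'$ and $g'$ — i.e.\ checking that $\theta$ and $\theta'$ are genuine homotopy equivalences and that they intertwine the two descriptions of the comparison maps (exactly, as sketched above, or merely up to homotopy, which already suffices since homotopic maps have homotopy-equivalent homotopy fibres). Everything else is a formal consequence of Lemma~\ref{crabb} and Remark~\ref{defIgg}.
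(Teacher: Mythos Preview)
Your proposal is correct and follows essentially the same route as the paper: factor through the homotopy pull-back $P$, use Lemma~\ref{crabb} on the strict triangles $X\xrightarrow{h}P\xrightarrow{\bar f}Y$ and $X\xrightarrow{h}P\xrightarrow{\bar g}X'$, and compare $\mathcal{I}(g,g')$ (resp.\ $\mathcal{I}(f,f')$) with $\mathcal{I}(h,\mathrm{id}_Y)$ (resp.\ $\mathcal{I}(h,\mathrm{id}_{X'})$) via the homotopy equivalence $I_{\bar f}\simeq I_{f'}$ coming from the pull-back square. Your map $\theta$ is in fact exactly the whisker map $\mathcal{I}(\bar g,g')$ the paper uses (for the natural homotopy $((y,\omega,x'),t)\mapsto\omega(t)$), so your explicit factorisation $\theta\circ\mathcal{I}(h,\mathrm{id}_Y)=\mathcal{I}(g,g')$ is the concrete form of the paper's identity $\mathcal{I}(g,g')=\mathcal{I}(\bar g,g')\circ\mathcal{I}(h,\mathrm{id}_Y)$; the only cosmetic difference is that the paper builds the fibration from $E_{\mathcal{I}(h,\mathrm{id}_Y)}\to I_{\bar f}$ rather than from $E_{\mathcal{I}(g,g')}\to I_{f'}$, which is equivalent once $I_{\bar f}\simeq I_{f'}$ is established.
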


\begin{proof}
We use the notation of Remark~\ref{defIgg}(\ref{it:deflgga}). Since $f= \bar{f} h$,
applying Lemma~\ref{C} to the composition $X\stackrel{h}{\longrightarrow} P\stackrel{\bar{f}}
{\longrightarrow} Y$, we obtain a fibration $\hat{I}_{h} \longrightarrow \hat{I}_{f} \longrightarrow I_{\bar f}$,
where $\hat{I}_{h}=I_{\mathcal{I}(h,\text{\tiny id}_{Y})}\simeq I_{h}$ and $\hat I_f=E_{\mathcal{I}(h,\text{\tiny id}_{Y})} \simeq I_{\bar{f}h} =I_{f}$, which proves the first part of the statement for the first fibration.

We now prove that the homotopy fibres $I_{\mathcal{I}(g,g')}$ and $I_h$ have the same homotopy type.
Given a point $y_0\in Y$, the two squares of the following diagram:
$$\xymatrix{I_{\bar{f}}\ar[d]\ar[rr]^{p_{\bar{f}}}&& P\ar[d]^{\bar{f}}\ar[rr]^{\bar{g}}&&X'\ar[d]^{f'}\\
\ast\ar[rr]^{c_{y_0}}&&Y\ar[rr]^{g'}&&Y'}$$
are homotopy pull-backs, and thus the whole rectangle is a homotopy pull-back~\cite[Lemma 12]{ma}.
But the square
$$\xymatrix{I_{f'}\ar[d]\ar[rr]^{p_{f'}} && X'\ar[d]^{f'}\\
\ast\ar[rr]^{c_{g'(y_0)}}&&Y'}$$ is the standard homotopy pull-back, and $\mathcal{I}(\bar g,g') : I_{\bar{f}}\longrightarrow I_{f'}$ is a whisker map by Remark~\ref{defIgg}(\ref{it:deflgga}). It follows from~\cite[p.~226]{ma} that $\mathcal{I}(\bar g,g')$ is a homotopy equivalence. Now $\mathcal{I}(g,g')= \mathcal{I}(\bar{g}h,g')=\mathcal{I}(\bar{g},g')\circ \mathcal{I}(h,\mbox{id}_Y)$, and using the fact that
$\mathcal{I}(\bar{g},g')$ is a homotopy equivalence, it follows from the last part of Remark~\ref{defIgg}(\ref{it:deflggabis}) that $\mathcal{I}(\text{\tiny id}_{I_{f}}, \mathcal{I}(\bar{g},g')): I_{\mathcal{I}(h,\text{\tiny id}_Y)}\longrightarrow  I_{\mathcal{I}(g,g')}$ is a homotopy equivalence. Further, $I_{\mathcal{I}(h,\text{\tiny id}_Y)}\simeq I_{h}$ by Lemma~\ref{crabb}, from which we conclude that $I_{\mathcal{I}(g,g')}\simeq I_{h}$.
The remaining assertions of the proposition follow by exchanging the r\^{o}les of $f,\bar{f}$ and $f'$ with those of $g,\bar{g}$ and $g'$ respectively.
\end{proof}

To prove the final result of this section, we require the following lemma. 

\begin{lemX}\label{hpull}
Let $f : X\longrightarrow Y$ be a pointed map.

\begin{enumerate}[(1)]
\item\label{it:hpull1} If $f$ is a homotopy equivalence then the homotopy fibre $I_f$ of $f$ is contractible.
\item\label{it:hpull2} If $I_f$ is contractible then the induced map $\Omega f : \Omega X\longrightarrow \Omega Y$ is a homotopy equivalence.
In particular, if the induced map $\pi_0(f) : \pi_0(X)\longrightarrow \pi_0(Y)$ is surjective then $f : X\longrightarrow Y$ is a weak homotopy equivalence.
\end{enumerate}
\end{lemX}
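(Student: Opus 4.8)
The plan is to treat the two parts separately. Part~(\ref{it:hpull1}) is immediate: I would apply Remark~\ref{defIgg}(\ref{it:deflggabis}) to the commutative square whose top and left edges are both $f$ and whose right and bottom edges are both $\mathrm{id}_{Y}$; since $f$ is a homotopy equivalence, the induced map $\mathcal{I}(f,\mathrm{id}_{Y})\colon I_{f}\longrightarrow I_{\mathrm{id}_{Y}}$ is a homotopy equivalence, and it then suffices to observe that $I_{\mathrm{id}_{Y}}$ is contractible. But by its very definition $I_{\mathrm{id}_{Y}}=\{(y,\gamma)\in Y\times Y^{I}\mid \gamma(0)=y,\ \gamma(1)=y_{0}\}$, which the projection onto the path coordinate carries homeomorphically onto the path space $\{\gamma\in Y^{I}\mid \gamma(1)=y_{0}\}$, and the latter contracts onto the constant path at $y_{0}$ via $(\gamma,s)\mapsto\bigl(t\mapsto\gamma((1-s)t+s)\bigr)$; hence $I_{f}$ is contractible.

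For the main assertion of part~(\ref{it:hpull2}), the idea is to identify $\Omega f$, up to homotopy, with the projection from the homotopy fibre of the connecting map of the homotopy fibration $I_{f}\xrightarrow{p_{f}}X\xrightarrow{f}Y$, and then to exploit the contractibility of $I_{f}$. Let $d\colon\Omega Y\longrightarrow I_{f}$ be the boundary map of Remark~\ref{defIgg}(\ref{it:deflggd}). On the one hand, applying Lemma~\ref{crabb} to the composition $\ast\xrightarrow{c_{x_{0}}}X\xrightarrow{f}Y$ --- and using that $\mathcal{I}(c_{x_{0}},\mathrm{id}_{Y})$ is precisely $d$, that $I_{f\circ c_{x_{0}}}=I_{c_{y_{0}}}=\Omega Y$, and that $I_{c_{x_{0}}}=\Omega X$ --- provides a homotopy equivalence $I_{d}\simeq\Omega X$ under which the projection $p_{d}\colon I_{d}\longrightarrow\Omega Y$ corresponds, up to homotopy, to $\Omega f$; this is just the fibre sequence
\[
\cdots\longrightarrow\Omega X\xrightarrow{\ \Omega f\ }\Omega Y\xrightarrow{\ d\ }I_{f}\xrightarrow{\ p_{f}\ }X\xrightarrow{\ f\ }Y
\]
of $f$ (compare diagram~(\ref{eq:crabb})). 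On the other hand, since $I_{f}$ is contractible the map $d$ takes values in a contractible space and is therefore null homotopic, so equation~(\ref{E1}) of Remark~\ref{defIgg}(\ref{it:deflggd}) yields a homotopy equivalence $I_{d}\xrightarrow{\simeq}\Omega Y\times\Omega(I_{f})$ whose composite with the projection onto the first factor is $p_{d}$. As $\Omega(I_{f})$ is contractible, this first-factor projection is a homotopy equivalence; hence $p_{d}$ is a homotopy equivalence, and therefore so is $\Omega f$.

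For the ``in particular'' statement, I would use the long exact homotopy sequence of the homotopy fibration $I_{f}\xrightarrow{p_{f}}X\xrightarrow{f}Y$: as $I_{f}$ is contractible one has $\pi_{j}(I_{f})=0$ for all $j\geq0$, so exactness forces $\pi_{j}(f)$ to be an isomorphism for every $j\geq1$ (for $j=1$ one also uses that $\pi_{0}(I_{f})$ is a single point), and the surjectivity of $\pi_{0}(f)$ then gives that $f$ is a weak homotopy equivalence. The step I expect to be the main obstacle is the main assertion of part~(\ref{it:hpull2}): the bare fact that the homotopy fibre of $\Omega f$ is contractible does not by itself imply that $\Omega f$ is a \emph{genuine} homotopy equivalence --- only a weak one --- since $\Omega X$ and $\Omega Y$ need not have the homotopy type of CW complexes; this is exactly why the argument has to be routed through the honest homotopy equivalences supplied by~(\ref{E1}) and by Lemma~\ref{crabb}, rather than through Whitehead's theorem.
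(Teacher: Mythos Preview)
Your argument is correct. Part~(\ref{it:hpull1}) is essentially the paper's proof with the reference unpacked: the paper simply cites \cite[Lemma~45]{ma} applied to the homotopy pull-back~(\ref{eq:homofibrepb}), and your use of Remark~\ref{defIgg}(\ref{it:deflggabis}) is the same citation filtered through that remark, together with the explicit contraction of $I_{\mathrm{id}_Y}$.

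For the main assertion of part~(\ref{it:hpull2}) you take a genuinely different route. The paper argues via representability: the Puppe fibre sequence makes $[Z,\Omega X]\xrightarrow{(\Omega f)_*}[Z,\Omega Y]$ a bijection for every pointed $Z$ (the neighbouring terms $[Z,\Omega I_f]$ and $[Z,I_f]$ being singletons), and the Yoneda lemma then forces $\Omega f$ to be a homotopy equivalence. You instead manufacture two explicit homotopy equivalences out of $I_d$ --- one to $\Omega X$ via Lemma~\ref{crabb}, the other to $\Omega Y\times\Omega(I_f)$ via~(\ref{E1}) --- and read off that $p_d$, hence $\Omega f$, is a homotopy equivalence by two-out-of-three. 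Both arguments deliver an honest (not merely weak) homotopy equivalence, which is the point you rightly stress; the paper's is shorter, while yours has the virtue of staying entirely within the machinery already set up in Section~1. One small remark: the identification of $p_d$ with $\Omega f$ under the equivalence $I_d\simeq\Omega X$ is a standard feature of the Puppe fibre sequence, but it is not something one reads off directly from diagram~(\ref{eq:crabb}) as written (that square degenerates, since $p_{c_{x_0}}$ lands in a point); it needs the usual extra reparametrisation argument, possibly up to loop inversion. This does not affect the conclusion.

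Your treatment of the ``in particular'' clause via the long exact sequence is fine and in fact more detailed than the paper, which does not spell it out.
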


The first part of the lemma follows by applying~\cite[Lemma~45]{ma} to the homotopy pull-back diagram~(\ref{eq:homofibrepb}). The second part may be obtained by noting that if $Z$ is a pointed space then the Puppe fibre sequence gives rise to a bijection $(\Omega f)_{\ast}: [Z,\Omega X] \longrightarrow [Z,\Omega Y]$, which implies that $\Omega f: \Omega X \longrightarrow \Omega Y$ is a homotopy equivalence.

\begin{RemX}
Let $ f : X\longrightarrow Y$ be a pointed map of path-connected spaces such that $\Omega f : \Omega X\longrightarrow \Omega Y$ is a homotopy equivalence. Then $f$ is a weak homotopy equivalence, but it is not necessarily a homotopy equivalence. Indeed, the loop space of the Warsaw circle is contractible, but the Warsaw circle is not. Nevertheless, under certain hypotheses, $f$ is a homotopy equivalence. For example, if $f : X\longrightarrow Y$ is a pointed map of path-connected Dold spaces (the class of such spaces contains $CW$-spaces) for which $\Omega f : \Omega X\longrightarrow \Omega Y$ is a homotopy equivalence, Allaud~\cite{Al} proved that $f$ is a homotopy equivalence.
\end{RemX}

Combining diagram~(\ref{eq:crabb}) with Lemma~\ref{hpull}, we obtain the following result.

\begin{CorX} \label{LL1} Given a fibration $f: X\longrightarrow Y$ and a map $g:Y'\longrightarrow Y$,
consider the following strict pull-back diagram
$$\xymatrix{Y'\times_YX\ar[d]_{f'}\ar[rr]^-{g'}&& X\ar[d]^f\\
Y'\ar[rr]_g&&Y.}$$
Then the induced map $\Omega \mathcal{I}(f',f) : \Omega I_{g'}\longrightarrow \Omega I_g$ is a homotopy equivalence. Further, if the map  $g:Y'\longrightarrow Y$ is $1$-connected then the map $\mathcal{I}(f',f) : I_{g'}\longrightarrow I_g$
is a weak homotopy equivalence.
\end{CorX}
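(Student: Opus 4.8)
\textit{Proof proposal.} The plan is to recognise $\mathcal{I}(f',f)$ as the whisker map $\mathcal{I}(g,g')$ of Proposition~\ref{LL} attached to the given pull-back square, and then to feed the fibration hypothesis on $f$ into that proposition together with Lemma~\ref{hpull}.

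First I would match the strict pull-back square of the statement with the homotopy-commutative square~(\ref{eq:homoco}): take the spaces $X, Y, X', Y'$ of~(\ref{eq:homoco}) to be $Y'\times_Y X$, $X$, $Y'$, $Y$ respectively, and the maps $f, g, f', g'$ of~(\ref{eq:homoco}) to be $g'$, $f'$, $g$, $f$ respectively. Under this correspondence the whisker map $\mathcal{I}(g,g'): I_f\longrightarrow I_{f'}$ of Remark~\ref{defIgg}(\ref{it:deflgga}) is exactly the map $\mathcal{I}(f',f): I_{g'}\longrightarrow I_g$ of the statement, and the map $h$ of diagram~(\ref{pullbacksquare}) is the canonical map from the strict pull-back $Y'\times_Y X$ to the standard homotopy pull-back $P$ of $X\stackrel{f}{\longrightarrow}Y\stackrel{g}{\longleftarrow}Y'$.

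Since $f$ is a fibration, the strict pull-back square is also a homotopy pull-back, so this map $h$ is a homotopy equivalence. By Lemma~\ref{hpull}(\ref{it:hpull1}) its homotopy fibre $I_h$ is then contractible, and by Proposition~\ref{LL} the homotopy fibre $I_{\mathcal{I}(f',f)}$ has the same homotopy type as $I_h$, hence is also contractible. Applying Lemma~\ref{hpull}(\ref{it:hpull2}) to $\mathcal{I}(f',f)$ yields that $\Omega\mathcal{I}(f',f): \Omega I_{g'}\longrightarrow\Omega I_g$ is a homotopy equivalence, which is the first assertion.

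For the second assertion, suppose that $g$ is $1$-connected. By Remark~\ref{defIgg}(\ref{it:deflggc}) this means $\operatorname{conn}(I_g)\ge 0$, so $I_g$ is non-empty and path-connected. Moreover $I_{g'}$ is non-empty, since it receives the projection map from the non-empty (indeed contractible) space $I_{\mathcal{I}(f',f)}$. As $I_g$ has a single path-component, it follows that the induced map $\pi_0(I_{g'})\longrightarrow\pi_0(I_g)$ is surjective, and Lemma~\ref{hpull}(\ref{it:hpull2}) then gives that $\mathcal{I}(f',f): I_{g'}\longrightarrow I_g$ is a weak homotopy equivalence. I expect the only points needing care to be the bookkeeping in the translation to the setting of Proposition~\ref{LL} and the (standard) fact that the strict pull-back of a fibration is a homotopy pull-back; neither should be a genuine obstacle.
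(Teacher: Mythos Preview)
Your argument is correct and follows the same overall architecture as the paper's proof: both use Proposition~\ref{LL} to identify $I_{\mathcal{I}(f',f)}$ with another homotopy fibre, show that fibre is contractible via Lemma~\ref{hpull}(\ref{it:hpull1}), and then invoke Lemma~\ref{hpull}(\ref{it:hpull2}). The one substantive difference lies in \emph{which} of the three equivalent homotopy fibres $I_{\mathcal{I}(f',f)}\simeq I_{\mathcal{I}(g',g)}\simeq I_h$ is shown to be contractible. You argue that $h$ is a homotopy equivalence because the strict pull-back of a fibration is a homotopy pull-back; the paper instead shows that $\mathcal{I}(g',g):I_{f'}\to I_f$ is a homotopy equivalence by observing that $f'$ is again a fibration and that $g'$ restricts to a homeomorphism between the topological fibres $F_{f'}\cong F_f$, which, combined with the equivalences $\iota_f:F_f\hookrightarrow I_f$ and $\iota_{f'}:F_{f'}\hookrightarrow I_{f'}$, gives the claim. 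Both routes exploit the same underlying fact (the fibration hypothesis on $f$) and are essentially interchangeable.

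For the second assertion the paper takes a slightly stronger step than you do: it notes that since $g$ is $1$-connected so is $g'$ (being its pull-back along a fibration), hence \emph{both} $I_g$ and $I_{g'}$ are path connected by Remark~\ref{defIgg}(\ref{it:deflggc}). Your argument only uses that $I_{g'}$ is non-empty, which already suffices for the surjectivity of $\pi_0(\mathcal{I}(f',f))$ needed in Lemma~\ref{hpull}(\ref{it:hpull2}); so your version is a little more economical, though the paper's version gives the extra information that $I_{g'}$ is connected.
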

\begin{proof}
Let $y'_{0}\in Y'$, and set $y_{0}=g(y_{0'})\in Y$. Since $f: X\longrightarrow Y$ is a fibration, the fact that the above diagram is
a strict pull-back implies that $f' : Y'\times_YX \longrightarrow Y'$ is also a fibration. Further, the restriction of $g'$ to the topological
fibres $F_{f'}=f'^{-1}(y'_0)$ and $F_f=f^{-1}(y_0)$ induces a homeomorphism $F_{f'}\stackrel{\approx}{\longrightarrow} F_f$. Since the injective
maps $\iota_{f}: F_f\lhra I_f$ and $\iota_{f'}: F_{f'}\lhra I_{f'}$ (as defined just before the statement of Corollary~\ref{crabb1})
are homotopy equivalences, the map $\mathcal{I}(g',g) : I_{f'}\longrightarrow I_f$ is a homotopy equivalence, and so
Lemma~\ref{hpull}(\ref{it:hpull1}) implies that the homotopy fibre $I_{\mathcal{I}(g',g)}$ is contractible. So by Proposition~\ref{LL}, the homotopy fibre $I_{\mathcal{I}(f',f)}$ of the map $\mathcal{I}(f',f) : I_{g'}\longrightarrow I_g$  is also contractible, and we conclude from Lemma~\ref{hpull}(\ref{it:hpull2}) that the induced map $\Omega \mathcal{I}(f',f) : \Omega I_{g'}\longrightarrow \Omega I_g$ is a homotopy equivalence, which proves the first part of the statement. If further $g:Y'\longrightarrow Y$ is $1$-connected then $g' : Y'\times_YX\longrightarrow X$ is also $1$-connected, and so by Remark~\ref{defIgg}(\ref{it:deflggc}), the spaces $I_g$ and  $I_{g'}$ are path connected. Lemma~\ref{hpull}(\ref{it:hpull2}) implies that the map $\mathcal{I}(f',f) : I_{g'}\longrightarrow I_g$ is a weak homotopy equivalence as required.
\end{proof}



\setcounter{section}{2}
\setcounter{equation}{0}
\setcounter{thmX}{0}

\noindent
{\large\bf 2.\ Configuration spaces and the inclusion map $F_n(X)\lhra\prod_1^nX$}

\medskip

Let $X$ be a topological space, and let $n\geq 1$. Given a free action $G\times X\longrightarrow X$ of a group $G$ on $X$, recall from~(\ref{eq:defconfig}) (resp.\ from~(\ref{eq:deforbconfig})) that $F_n(X)$ is the $n$\textsuperscript{th} configuration space (resp.\ $F_n^G(X)$ is the orbit configuration space) of $X$. Let $i_n(X) : F_n(X)\lhra\prod_1^nX$ denote the inclusion map. We assume from now on that $G$ is a topological group. Let $q(X):X\longrightarrow X/G$ denote the associated quotient map, and let $\bar{x}=q(X)(x)$ for all $x\in X$. Then $q(X)$ induces a map $\psi_n(X): F_n^G(X)\longrightarrow F_n(X/G)$ given by $\psi_n(X)(x_1,\ldots,x_n)=(\bar{x}_1,\ldots,\bar{x}_n)$. This map is well defined, since if $(x_{1},\ldots,x_{n})\in F^G_n(X)$ then
for all $i\neq j$, $Gx_{i}\cap Gx_{j}= \emptyset$, so $\bar{x}_{i}\neq \bar{x}_{j}$.
We thus obtain the following commutative diagram:
\begin{equation}\label{pullbackFGnX}
\begin{gathered}
\xymatrix{F^G_n(X)\ar[d]_{\psi_n(X)} \ar@{^{(}->}[rr]^{i_{n}(X)\left\lvert_{F^G_n(X)} \right.} && \prod_1^n X\ar[d]^{\prod_1^n q(X)}\\
F_n(X/G)\ar@{^{(}->}[rr]_{i_n(X/G)}&&\prod_1^n X/G.}
\end{gathered}
\end{equation}
Further, the associated (strict) pull-back $F_n(X/G)\times_{\prod_1^nX/G}\prod_1^nX$ is given by:
\begin{equation*}
\biggl\{  \biggl. \bigl( (\bar{x}_{1}, \ldots, \bar{x}_{n}), (y_{1},\ldots,y_{n}) \bigr)\in F_n(X/G) \times \prod_1^nX \,\biggr\rvert\, \text{$\bar{x}_{i}= \bar{y}_{i}$ for all $1\le i\le n$} \biggr\}.
\end{equation*}
One may then check that the map $F_n(X/G)\times_{\prod_1^nX/G}\prod_1^nX \longrightarrow F_{n}^{G}(X)$ given by
$\bigl( (\bar{x}_{1}, \ldots, \bar{x}_{n}), (x_{1},\ldots,x_{n}) \bigr)\longmapsto (x_{1},\ldots, x_{n})$ is a (well-defined) homeomorphism. In particular, (\ref{pullbackFGnX}) is a (strict) pull-back diagram.

Now suppose that $X$ is a topological manifold, let $Q_r =\{q_1,\ldots,q_r\}$ be a finite, non-empty subset of $X$ whose elements belong
to distinct orbits, and let $\bar{Q}_{r}=q(X)(Q_{r})$. Let  $i_{n}'(X) : F_{n}(X\backslash Q_{r}) \longrightarrow \prod_{1}^{n} X$
(resp.\ $i_{n}''(X) : F_n^G(X\backslash GQ_r) \longrightarrow \prod_{1}^{n} X$) denote the composition of $i_{n}(X\backslash Q_{r})$
(resp.\ of $i_{n}(X\backslash Q_{r})\left\lvert_{F_n^G(X\backslash GQ_r)}\right.$) with the inclusion map
$\prod_{1}^{n} X\backslash Q_{r}\lhra \prod_{1}^{n} X$. Since the diagram
$$\begin{gathered}
\xymatrix{\prod_1^n (X\backslash GQ_r) \ar@{^{(}->}[rr]\ar[d]_{q(X\backslash GQ_r)} && \prod_1^n X\ar[d]^{\prod_1^n q(X)}\\
\prod_1^n ((X/G)\backslash \bar Q_r)\ar@{^{(}->}[rr]&&\prod_1^n X/G}
\end{gathered}$$
is a (strict) pull-back, taking $X \backslash GQ_{r}$ in place of $X$ in diagram~(\ref{pullbackFGnX}) and composing with the above diagram, we obtain the following (strict) pull-back:
\begin{equation}\label{E}
\begin{gathered}
\xymatrix{F^G_n(X\backslash GQ_r)\ar[d]_{\psi_n(X\backslash GQ_r)}\ar@{^{(}->}[rrr]^{i''_{n}(X)}&&& \prod_1^nX\ar[d]^{\prod_1^n q(X)}\\
F_n((X/G)\backslash\bar{Q}_r)\ar@{^{(}->}[rrr]_{i'_n(X/G)}&&&\prod_1^nX/G.}
\end{gathered}
\end{equation}
Now the map $\prod_1^nq(X) : \prod_1^nX\longrightarrow \prod_1^nX/G$ is a fibration, and the application of Corollary~\ref{LL1} to the map $\mathcal{I}\Bigl( \psi_n(X\backslash GQ_r), \prod_1^n q(X)\Bigr): I_{i''_n(X)}\longrightarrow I_{i'_{n}(X/G)}$ gives rise to the following homotopy equivalence:
\begin{equation}
\Omega\mathcal{I}\Bigl(\psi_n(X\backslash GQ_r), \prod_1^n q(X)\Bigr):\Omega I_{i''_n(X)}\stackrel{\simeq}{\longrightarrow}
\Omega I_{i'_{n}(X/G)}.\label{E-11}
\end{equation}

If $1\le i\le n$, let $p_i(X) : \prod_1^n X\longrightarrow \prod_1^{n-1}X$ denote the projection map given by forgetting the $i$\textsuperscript{th} coordinate. Let $J = (i_1,\ldots,i_r)$ be a sequence of integers that satisfy $1\le i_1 < i_2 < \cdots < i_r\le n$, and let $p_J(X): \prod_1^n X \longrightarrow \prod_1^{n-r} X$ be the map defined by $p_J(X)=p_{i_1}(X)\circ\cdots\circ p_{i_r}(X)$.
The map $p_{i}(X)$ restricts to the map $p_i(X)\left\lvert_{F_{n}(X)}\right. : F_n(X)\longrightarrow F_{n-1}(X)$ that is a fibration whose fibre
may be identified with $X\backslash Q _{n-1}$, where $Q_{n-1}=\{ q_{1},\ldots,q_{i-1},q_{i+1},\ldots, q_{n} \}$. This is a special case of the following result.
\begin{thmX}[{\cite[Theorem~3]{FaN}}]\label{FC}
Let $X$ be a topological manifold without boundary. With the above notation, the map $p_J(X)\left\lvert_{F_{n}(X)}\right. : F_n(X)\longrightarrow F_{n-r}(X)$ is a fibration
whose fibre may be identified with $F_r(X\backslash Q_{n-r})$, where $Q_{n-r}=Q_{n} \backslash \{ q_{i_{1}},\ldots, q_{i_{r}}\}$.
\end{thmX}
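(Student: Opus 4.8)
The plan is to reconstruct the classical Fadell--Neuwirth argument, showing that $\pi_J:=p_J(X)\left\lvert_{F_n(X)}\right.\colon F_n(X)\longrightarrow F_{n-r}(X)$ is a locally trivial fibre bundle, which in particular makes it a fibration since $F_{n-r}(X)$, being an open submanifold of the manifold $\prod_1^{n-r}X$, is paracompact. I would first pin down the fibre set-theoretically: the preimage under $\pi_J$ of a point $(y_j)_{j\notin J}\in F_{n-r}(X)$ consists of the tuples $(x_1,\ldots,x_n)\in F_n(X)$ with $x_j=y_j$ for $j\notin J$, and such a tuple is determined by its coordinates $x_{i_1},\ldots,x_{i_r}$, which may be an arbitrary tuple of pairwise distinct points of $X$ avoiding $\{y_j\mid j\notin J\}$; hence this preimage is naturally $F_r\bigl(X\backslash\{y_j\mid j\notin J\}\bigr)$, and over the basepoint $Q_{n-r}=Q_n\backslash\{q_{i_1},\ldots,q_{i_r}\}$ it is $F_r(X\backslash Q_{n-r})$. (One could instead reduce to the case $r=1$ using $p_J(X)=p_{i_1}(X)\circ\cdots\circ p_{i_r}(X)$ and the fact that a composition of fibrations is a fibration, but the construction below treats all $r$ uniformly.)

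For local triviality, fix $\mathbf{b}=(b_j)_{j\notin J}\in F_{n-r}(X)$. Since $X$ is a manifold without boundary, choose pairwise disjoint open sets $W_j\ni b_j$ ($j\notin J$), homeomorphisms $\varphi_j\colon W_j\stackrel{\approx}{\longrightarrow}\R^m$ with $\varphi_j(b_j)=0$ (here $m=\dim X$), and smaller neighbourhoods $U_j$ of $b_j$ with $\varphi_j(U_j)$ a small bounded ball about $0$. Fixing for each $j$ a Lipschitz bump function on $\R^m$ that is $1$ on $\varphi_j(U_j)$ and supported in $\varphi_j(W_j)$, one obtains for every $\mathbf{x}=(x_j)_{j\notin J}$ in the open neighbourhood $V:=\bigl(\prod_{j\notin J}U_j\bigr)\cap F_{n-r}(X)$ of $\mathbf{b}$ a homeomorphism $h_{\mathbf{x}}\colon X\longrightarrow X$, supported in $\bigcup_{j\notin J}W_j$, varying continuously with $\mathbf{x}$, with $h_{\mathbf{x}}(b_j)=x_j$ for all $j\notin J$ and $h_{\mathbf{b}}=\mathrm{id}_X$: inside $W_j$ it is the translation by $\varphi_j(x_j)$ damped by the bump function (a homeomorphism as soon as the ball $\varphi_j(U_j)$ is small relative to the Lipschitz constant), and the identity elsewhere. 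Then the map
\begin{equation*}
(x_1,\ldots,x_n)\longmapsto\bigl((x_j)_{j\notin J},\,h_{\mathbf{x}}^{-1}(x_{i_1}),\ldots,h_{\mathbf{x}}^{-1}(x_{i_r})\bigr)
\end{equation*}
is a homeomorphism $\pi_J^{-1}(V)\stackrel{\approx}{\longrightarrow}V\times F_r\bigl(X\backslash\{b_j\mid j\notin J\}\bigr)$ over $V$, with inverse sending $\bigl((x_j)_{j\notin J},z_1,\ldots,z_r\bigr)$ to the tuple whose $j$-th entry is $x_j$ for $j\notin J$ and whose $i_k$-th entry is $h_{\mathbf{x}}(z_k)$. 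It is well defined: the $h_{\mathbf{x}}^{-1}(x_{i_k})$ are distinct because $h_{\mathbf{x}}^{-1}$ is injective, and $h_{\mathbf{x}}^{-1}(x_{i_k})=b_j$ would give $x_{i_k}=h_{\mathbf{x}}(b_j)=x_j$, impossible since $i_k\ne j$ and the coordinates of a point of $F_n(X)$ are distinct. As $\mathbf{b}$ varies, the sets $V$ cover $F_{n-r}(X)$, so $\pi_J$ is locally trivial, hence a fibration, with fibre over $Q_{n-r}$ equal to $F_r(X\backslash Q_{n-r})$.

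The main obstacle is the construction of the continuously varying ambient homeomorphisms $h_{\mathbf{x}}$: in the smooth setting one would integrate a vector field, but here one must proceed purely topologically, and this is exactly where the absence of boundary is used --- each $b_j$ lies in a Euclidean chart, so the required moves can be realised by explicit compactly supported translations in disjoint charts, whose continuous dependence on $\mathbf{x}$ is straightforward to check. Granting this, the identification of the fibre and the passage from local triviality to the fibration property are formal.
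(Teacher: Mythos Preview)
The paper does not supply its own proof of this theorem; it simply quotes it as \cite[Theorem~3]{FaN}. Your proposal reconstructs precisely the classical Fadell--Neuwirth argument from that reference: identify the fibre set-theoretically, then produce local trivialisations over small products of Euclidean charts by means of compactly supported, continuously varying self-homeomorphisms of $X$ that carry the basepoint configuration to nearby ones. The construction of $h_{\mathbf{x}}$ via damped translations in disjoint charts (with the Lipschitz bound guaranteeing injectivity) is the standard device, and your verification that the trivialisation and its inverse land in the right configuration spaces is correct. So your argument is both correct and essentially the same as the proof in the cited source; there is nothing further to compare against in the present paper.
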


For $1\leq i\leq n$, let $J_i=(1,\ldots,i-1,i+1,\ldots,n)$. From now on, we assume that $X$ is a pointed topological manifold without boundary.
We thus obtain the following commutative diagram:
\begin{equation}\label{E'}
\begin{gathered}
\xymatrix{F_{n-1}(X\backslash Q_1)\ar@{^{(}->}[rr]^-{j_{n}'(X)}\ar@{^{(}->}[d]^{i'_{n-1}(X)}&& F_n(X)\ar[rr]^-{p_{J_{i}(X)}\left\lvert_{F_{n}(X)}\right.}\ar@{^{(}->}[d]^{i_n(X)} && X\ar@{=}[d]\\
\prod_1^{n-1}X\ar@{^{(}->}[rr]^-{j_{n}(X)} && \prod_1^nX\ar[rr]^-{p_{J_i}(X)} &&X,}
\end{gathered}
\end{equation}
where the two inclusions $j_{n}'(X): F_{n-1}(X\backslash Q_1) \lhra F_n(X)$ and $j_{n}(X): \prod_1^{n-1}X \lhra \prod_1^{n}X$
are given by inserting the point $q_{i}$ in the $i$\textsuperscript{th} position. Now
the rows of~(\ref{E'}) are fibrations by Theorem~\ref{FC}, so Corollary~\ref{crabb1} implies that the map:
\begin{equation}\label{E'-11}
\mathcal{I}(j_{n'}(X),j_{n}(X)) : I_{i'_{n-1}(X)}\stackrel{\simeq}{\longrightarrow}I_{i_n(X)}
\end{equation}
is a homotopy equivalence for all choices of $1\le i\le n$ above.
\par Our next goal is to prove the following theorem that is a stronger version of~\cite[Theorem 1]{Bi1},
without the hypothesis that $X$ is smooth.
\begin{thmX}\label{B}
Let $X$ be a connected, topological manifold without boundary. Then the inclusion map $i_n(X) : F_n(X)\lhra\prod_1^nX$
is $(\dim(X)-1)$-connected.
\end{thmX}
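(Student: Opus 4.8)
The plan is to argue by induction on $n$, the statement at stage $n$ being that the inclusion $i_n(X): F_n(X)\lhra\prod_1^n X$ is $(\dim(X)-1)$-connected for \emph{every} connected topological manifold $X$ without boundary. The case $n=1$ is immediate, since $i_1(X)=\mathrm{id}_X$ is $\infty$-connected. For the inductive step with $n\ge 2$, I would use the commutative diagram~(\ref{E'}), whose rows are the topological Fadell--Neuwirth fibrations of Theorem~\ref{FC}, together with the homotopy equivalence $\mathcal{I}(j_n'(X),j_n(X)):I_{i'_{n-1}(X)}\stackrel{\simeq}{\longrightarrow}I_{i_n(X)}$ of~(\ref{E'-11}). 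By Remark~\ref{defIgg}(\ref{it:deflggc}), and the fact that homotopy-equivalent spaces have the same connectivity, it therefore suffices to show that the inclusion $i'_{n-1}(X):F_{n-1}(X\backslash Q_1)\lhra\prod_1^{n-1}X$ is $(\dim(X)-1)$-connected, where $Q_1=\{q_i\}$ consists of a single point. Writing $d=\dim(X)$, the case $d=1$ is trivial: $(d-1)$-connected means $0$-connected, i.e.\ $\pi_0(i_n(X))$ is surjective, and this holds because $\prod_1^n X$ is path-connected.

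Assume now that $d\ge 2$. Then $X\backslash Q_1$ is again a connected topological manifold without boundary of dimension $d$, and all the configuration spaces in sight are path-connected, so the $\pi_0$-clause in the definition of ``$m$-connected'' is automatically satisfied. By definition, $i'_{n-1}(X)$ factors as the composite of $i_{n-1}(X\backslash Q_1):F_{n-1}(X\backslash Q_1)\lhra\prod_1^{n-1}(X\backslash Q_1)$, which is $(d-1)$-connected by the inductive hypothesis applied to $X\backslash Q_1$, followed by the inclusion $\iota:\prod_1^{n-1}(X\backslash Q_1)\lhra\prod_1^{n-1}X$. I would then observe that the inclusion $X\backslash\{q_i\}\lhra X$ is $(d-1)$-connected: choosing a closed Euclidean ball neighbourhood $\bar{U}\cong D^d$ of $q_i$, one has $X\simeq(X\backslash\{q_i\})\cup_{\partial\bar{U}}\bar{U}$, so that $X$ is, up to homotopy, obtained from $X\backslash\{q_i\}$ by attaching a single $d$-cell, whence the pair $(X,X\backslash\{q_i\})$ is $(d-1)$-connected (equivalently, one may invoke homotopy excision for the open cover $X=(X\backslash\{q_i\})\cup\R^d$, using that $(\R^d,\S^{d-1})$ is $(d-1)$-connected). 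Since a finite product of $m$-connected maps is $m$-connected --- its homotopy fibre is the product of the homotopy fibres, and a product of $(m-1)$-connected spaces is $(m-1)$-connected --- the map $\iota$ is $(d-1)$-connected, and a composite of two $(d-1)$-connected maps is $(d-1)$-connected. Hence $i'_{n-1}(X)$, and therefore $i_n(X)$, is $(d-1)$-connected, which completes the induction.

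I expect the only real input to be the identification $I_{i'_{n-1}(X)}\simeq I_{i_n(X)}$, which rests on the topological Fadell--Neuwirth fibration of Theorem~\ref{FC} and on Corollary~\ref{crabb1}; this is precisely where the hypothesis that $X$ be a topological (rather than smooth) manifold enters, and it is what lets us drop the smoothness assumption of~\cite[Theorem~1]{Bi1}. After that, the argument is just connectivity bookkeeping, and the one statement needing a short independent justification is the $(d-1)$-connectivity of $X\backslash\{q_i\}\lhra X$, handled by the cell-attachment remark above. Path-connectedness of the configuration spaces $F_m(X)$ and $F_m(X\backslash Q_1)$ for a connected manifold of dimension $\ge 2$, and the behaviour of connectivity under finite products and composition, are standard, so I do not anticipate any serious obstacle.
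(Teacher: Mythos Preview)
Your argument is correct and follows essentially the same route as the paper: induct on $n$, use the Fadell--Neuwirth fibration diagram~(\ref{E'}) to reduce the connectivity of $i_n(X)$ to that of $i'_{n-1}(X)$, factor the latter as $i_{n-1}(X\backslash Q_1)$ followed by $\prod_1^{n-1}(X\backslash Q_1)\hookrightarrow\prod_1^{n-1}X$, and apply the inductive hypothesis together with the $(\dim X-1)$-connectivity of $X\backslash\{q\}\hookrightarrow X$. The only differences are packaging --- you invoke the homotopy-fibre equivalence~(\ref{E'-11}) and Remark~\ref{defIgg}(\ref{it:deflggc}) where the paper runs the 5-Lemma directly on the long exact sequences of~(\ref{E'}), and your cell-attachment argument for the connectivity of $X\backslash\{q\}\hookrightarrow X$ is more direct than the paper's Lemma~\ref{bir}, which passes to the universal cover and combines Van Kampen, Mayer--Vietoris and Hurewicz.
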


Clearly, Theorem~\ref{B} implies~\cite[Theorem 1]{Bi1}. In order to prove Theorem~\ref{B}, we shall make use of the following lemma.

\begin{lemX}\label{bir}
Let $X$ be a connected, topological manifold without boundary, and let $Q$ be a finite non-empty subset of $X$. Then the inclusion map $i(X) : X\backslash Q\lhra X$ is $(\dim (X)-1)$-connected.
\end{lemX}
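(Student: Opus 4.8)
The plan is to prove the statement by induction on the cardinality $\#Q$ of $Q$, using the long exact sequence in homotopy groups (equivalently, the connectivity of the homotopy fibre) together with the local structure of a topological manifold. First I would treat the base case $\#Q=1$, say $Q=\{q\}$. Since $X$ is a topological manifold without boundary, $q$ has an open neighbourhood $U$ homeomorphic to $\R^{m}$, where $m=\dim(X)$. By excision (or by a Mayer--Vietoris / van Kampen argument, applied carefully in the topological category), the pair $(X, X\backslash\{q\})$ has the same relative homotopy groups in the relevant range as the pair $(\R^{m}, \R^{m}\backslash\{0\})$; and $\R^{m}\backslash\{0\}\simeq \S^{m-1}$, which is $(m-2)$-connected, while $\R^{m}$ is contractible. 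Hence $\pi_{i}(X,X\backslash\{q\})=0$ for $i\le m-1$, which is exactly the assertion that the inclusion $X\backslash\{q\}\lhra X$ is $(m-1)$-connected.

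For the inductive step, suppose the result holds for all connected topological manifolds without boundary and all non-empty finite subsets of cardinality $<r$, and let $\#Q=r\ge 2$. Pick $q\in Q$ and set $Q'=Q\backslash\{q\}$, so $\#Q'=r-1$. I would factor the inclusion as
\begin{equation*}
X\backslash Q \lhra X\backslash Q' \lhra X .
\end{equation*}
The second map is $(m-1)$-connected by the base case (applied to $X$ with the singleton $\{q\}$ removed — note $X$ is connected and here $Q'\subseteq X$; more precisely, $X\backslash Q'$ is still a connected manifold without boundary as $m\ge 2$, and $X\backslash Q = (X\backslash Q')\backslash\{q\}$... wait, $q\in Q'$? no, $q\notin Q'$). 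Let me restate: the first map $X\backslash Q\lhra X\backslash Q'$ is the removal of the single point $q$ from the connected manifold $X\backslash Q'$, hence is $(m-1)$-connected by the base case; the second map $X\backslash Q'\lhra X$ is the removal of the $(r-1)$-point set $Q'$, hence is $(m-1)$-connected by the induction hypothesis. A composition of two $(m-1)$-connected maps is $(m-1)$-connected (either directly from the long exact sequences in homotopy of the pairs, using the five lemma in the range $i\le m-1$, or via Remark~\ref{defIgg}(\ref{it:deflggc}) and the fibration-type long exact sequence relating the homotopy fibres of $f$, $g$ and $gf$ for a composite $g\circ f$, as in Lemma~\ref{C} / Proposition~\ref{LL}). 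This completes the induction.

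I expect the main obstacle to be the base case in the \emph{topological} (as opposed to smooth or PL) category: one must justify that removing a single point from a topological $m$-manifold is $(m-1)$-connected without invoking a tubular neighbourhood or smooth structure. The clean way is to use the local homeomorphism with $\R^{m}$ together with excision for homotopy groups — specifically the Blakers--Massey type statement, or more simply the observation that $(X,X\backslash\{q\})$ is, locally near $q$, the pair $(\R^{m},\R^{m}\backslash\{0\})$, and that the inclusion $X\backslash\{q\}\lhra X$ admits a homotopy-theoretic deformation retracting a punctured chart onto its boundary sphere while fixing everything outside. Once this local-to-global reduction is in place, the connectivity count $\operatorname{conn}(\S^{m-1})=m-2$, hence $(m-1)$-connectivity of the inclusion, is immediate; I would cite a standard reference (e.g.\ the excision theorem for homotopy groups, or the corresponding statement in~\cite{A}) rather than reprove it. A secondary, purely bookkeeping point is to make sure $X\backslash Q'$ remains connected at each stage, which holds because $m=\dim(X)\ge 2$ whenever the statement is non-vacuous (for $m\le 1$ the conclusion is either trivial or the hypotheses force $X=\S^{1},\R^{1}$, handled separately or excluded).
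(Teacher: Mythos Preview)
Your inductive reduction to the singleton case is exactly what the paper does, and the composition-of-$(m-1)$-connected-maps step is the same. The genuine difference is in the base case $\#Q=1$.

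You propose to use homotopy excision (Blakers--Massey) for the open cover $X=U\cup(X\backslash\{q\})$ to identify $\pi_i(X,X\backslash\{q\})$ with $\pi_i(\R^m,\R^m\backslash\{0\})$ in the range $i\le m-1$. This works, but note that Blakers--Massey requires a connectivity hypothesis on \emph{both} pairs: while $(U,U\backslash\{q\})$ is $(m-1)$-connected, the pair $(X\backslash\{q\},U\backslash\{q\})$ is in general only $0$-connected (both spaces are path-connected for $m\ge 2$, but there is no reason for $\pi_1(U\backslash\{q\})\to\pi_1(X\backslash\{q\})$ to be onto). With these inputs Blakers--Massey gives an isomorphism for $i<m-1$ and only a \emph{surjection} at $i=m-1$; since the source groups vanish for $i\le m-1$, that surjection still yields $\pi_i(X,X\backslash\{q\})=0$ for $i\le m-1$, which is what you need. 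So your argument goes through, but the phrase ``has the same relative homotopy groups'' overstates what excision delivers; you should make the surjectivity-at-the-top-degree step explicit and cite a version of Blakers--Massey valid for open covers of arbitrary spaces (e.g.\ tom Dieck), not just CW pairs.

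The paper takes a different and more elementary route for the base case: it handles $\pi_1$ by Van Kampen, and for $j\ge 2$ it passes to the universal cover $p:\tilde X\to X$, sets $X_1=p^{-1}(X\backslash\{q\})$ (the universal cover of $X\backslash\{q\}$, by Lemma~\ref{L}), uses the relative Mayer--Vietoris sequence in \emph{homology} to get $H_i(\tilde X,X_1)=0$ for $i\le m-1$, and then invokes the relative Hurewicz theorem (legitimate because $\tilde X$ and $X_1$ are simply connected). Your approach is shorter and more conceptual but leans on the homotopy excision theorem; the paper's approach trades that for the trio Van Kampen\,/\,homological Mayer--Vietoris\,/\,Hurewicz, which are more elementary tools and sidestep any delicacy about the connectivity hypothesis on the second pair.
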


\begin{proof}
If $\dim(X)=1$, the result is clear since $X$ is connected. So assume that $\dim(X)\geq 2$, and suppose first that $Q$ consists of a single point $q$. Write $X$ as the union of $X \backslash Q$ and $D_q$, where $D_q$ is a small open disc whose centre is $q$ (such a disc exists because $X$ is a topological manifold), and let $x$ be a basepoint lying in
$(X \backslash Q) \cap D_q$. Now $(X \backslash Q)\cap D_q$ has the homotopy type of a sphere whose dimension is equal to $\dim(X)-1$, which is strictly positive, so by Van Kampen's Theorem, $\pi_1(X)$ is isomorphic to the quotient of $\pi_1(X \backslash Q)$ by the normal closure in $\pi_1(X \backslash Q)$ of the image of $\pi_1((X \backslash Q)\cap D_q)$ by the homomorphism induced by the inclusion $(X \backslash Q)\cap D_q \lhra X \backslash Q$, which implies in particular that the homomorphism $\pi_{1}(i(X)):\pi_1(X \backslash Q)\longrightarrow \pi_1(X)$ is surjective.
\par If $\dim(X)=2$, the result follows using the  connectedness of $X$. Now assume that $\dim(X)>2$. Then $\pi_1((X \backslash Q)\cap D_q)$ is trivial, and we conclude from the above arguments that the homomorphism $\pi_{1}(i(X)):\pi_1(X \backslash Q)\longrightarrow \pi_1(X)$ is an isomorphism. To compare the higher homotopy groups, consider the universal
covering $p:\tilde{X} \longrightarrow X$ of $X$, and set $X_1=p^{-1}(X \backslash Q)$. Identifying $X_{1}$ with the pull-back $(X\backslash Q) \times_{X} \tilde{X}$, it follows from Lemma~\ref{L} that the map $p\left\lvert_{X_1}\right. : X_1 \longrightarrow X \backslash Q$ is the
universal covering of $X \backslash Q$, in analogy with the method of~\cite[Section~2]{FH0}. By  the relative form of the Mayer-Vietoris sequence, the relative homology groups $H_i(\tilde X, X_1)$ are zero for all
$0\leq i \leq \dim(X)-1$. The Hurewicz Theorem then implies that the homomorphism $\pi_j(X_1) \longrightarrow \pi_j(\tilde X)$ is an isomorphism if $j+1<\dim (\tilde X)$ and is surjective if $j=\dim(\tilde X)-1$. So if $Q$ consists of a single point, using the fact that $\pi_{j}(X_{1}) \cong \pi_{j}(X \backslash Q)$ if $j\geq 2$, the homomorphism $\pi_j(i(X)) :\pi_j(X\backslash Q)\longrightarrow \pi_j(X)$ is an isomorphism if $j+1<\dim (\tilde X)$ and is surjective if $j=\dim(\tilde X)-1$, which proves the result in this case. In the general case, if $Q=\{ q_{1}, \ldots, q_{m} \}$, where $m\geq 2$, the result
follows by induction on $m$ and by writing the manifold $X\backslash Q$ as $(X \backslash \{ q_{1}, \ldots, q_{m-1} \})\backslash \{ q_{m} \}$.
\end{proof}

\begin{proof}[Proof of Theorem~\ref{B}] The proof is by induction on $n$. If $n=1$ then the statement clearly holds. So assume that the result is true
for some $n\geq 1$ and for any connected topological manifold without boundary. By taking the long exact sequence in homotopy of the fibrations of the
commutative diagram~(\ref{E'}), where we replace $n$ by $n+1$,
we obtain the following commutative diagram of exact sequences:
\begin{equation}\label{eq:in+1X}
\begin{gathered}
\xymatrix@C-=0.65cm@1{\pi_{j+1}(X) \ar[r]\ar@{=}[d]&\pi_j(F_{n}(X\backslash {Q}_1))\ar[r]\ar[d]^{\pi_{j}(i_{n}'(X))} &\pi_j(F_{n+1}(X))\ar[r]\ar[d]^{\pi_{j}(i_{n+1}(X))} &\pi_j(X)\ar@{=}[d] \ar[r] & \pi_{j-1}(F_{n}(X\backslash {Q}_1))\ar[d]^{\pi_{j-1}(i_{n}'(X))}\\
\pi_{j+1}(X)\ar[r] &\pi_j(\prod_1^n X)\ar[r]&\pi_j(\prod_1^{n+1}X)\ar[r]&\pi_j(X) \ar[r] & \pi_{j-1}(\prod_1^n X)}
\end{gathered}
\end{equation}
for all $j\geq 1$. If $k\geq 0$, the homomorphism $\pi_{k}(i_{n}'(X))$ may be written as the composition $\pi_{k}(F_{n}(X\backslash Q_1))\xrightarrow{\pi_{k}(i_{n}(X\backslash Q_{1}))} \pi_{k}(\prod_1^n X\backslash Q_1) \xrightarrow{\pi_{k}(\prod_{1}^{n} i(X))} \pi_{k}(\prod_1^n X)$ of the homomorphisms induced by the composition $F_{n}(X\backslash Q_1)  \xrightarrow{i_{n}(X\backslash Q_{1})}\prod_1^n X\backslash Q_1 \xrightarrow{\prod_{1}^{n} i(X)} \prod_1^n X$. If further $k\leq j$, the first (resp.\ second) homomorphism $\pi_{k}(i_{n}(X\backslash Q_{1})): \pi_{k}(F_{n}(X\backslash Q_1))\longrightarrow \pi_{k}(\prod_1^n X\backslash Q_1)$ (resp.\ $\pi_{k}(\prod_{1}^{n} i(X)):\prod_1^n \pi_{k}(X\backslash Q_1) \longrightarrow \prod_1^n \pi_{k}(X)$) is surjective if $j<\dim(X)$, and is injective if $j+1<\dim(X)$ by the induction hypothesis applied to the manifold $X \backslash Q_1$ (resp.\ by Lemma~\ref{bir}). Hence $\pi_{j}(i_{n}'(X))$ is surjective if $j<\dim(X)$, and is an isomorphism if $j+1<\dim(X)$. Applying the 5-Lemma to~(\ref{eq:in+1X}), it follows that $\pi_{j}(i_{n+1}(X))$ is surjective if $j<\dim(X)$ (resp.\ is an isomorphism if $j+1<\dim(X)$) as required.
\end{proof}

Theorem~\ref{B} gives rise to the following corollary.
\begin{CorX}\label{cc}
Let $X$ be a connected topological manifold without boundary. Then the inclusion map $i'_r(X) : F_r(X\backslash Q_{n-r})\lhra\prod_1^rX$ is $(\dim(X)-1)$-connected.
\end{CorX}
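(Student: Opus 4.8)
The plan is to reduce Corollary~\ref{cc} to Theorem~\ref{B} and Lemma~\ref{bir} by exploiting the factorisation of the map through the configuration space of the punctured manifold. By definition, $i'_r(X)$ is the composite
$$F_r(X\backslash Q_{n-r})\xrightarrow{\ i_r(X\backslash Q_{n-r})\ }\prod_1^r(X\backslash Q_{n-r})\xrightarrow{\ \prod_1^r i(X)\ }\prod_1^r X,$$
where $i(X):X\backslash Q_{n-r}\lhra X$ is the inclusion. First I would record two elementary facts: a composite of two $m$-connected maps is again $m$-connected (on homotopy groups, in each relevant degree one composes two surjections, resp.\ two isomorphisms), and if $i(X)$ is $m$-connected then so is $\prod_1^r i(X)$, since $\pi_j$ of a finite product is the product of the $\pi_j$'s and a finite product of surjections (resp.\ isomorphisms) is a surjection (resp.\ isomorphism). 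It then suffices to show that each of the two maps in the above composite is $(\dim(X)-1)$-connected.

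Assume first that $\dim(X)\geq 2$. The subspace $X\backslash Q_{n-r}$ is open in $X$, hence is itself a topological manifold without boundary of dimension $\dim(X)$, and it remains connected because only finitely many points are removed from a connected manifold of dimension at least $2$. Applying Theorem~\ref{B} to the manifold $X\backslash Q_{n-r}$ (in place of $X$) shows that the inclusion $i_r(X\backslash Q_{n-r}):F_r(X\backslash Q_{n-r})\lhra\prod_1^r(X\backslash Q_{n-r})$ is $(\dim(X\backslash Q_{n-r})-1)=(\dim(X)-1)$-connected. By Lemma~\ref{bir}, the inclusion $i(X):X\backslash Q_{n-r}\lhra X$ is $(\dim(X)-1)$-connected, and therefore so is $\prod_1^r i(X)$ by the remark above. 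Combining the two, $i'_r(X)$ is $(\dim(X)-1)$-connected.

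It remains to deal with $\dim(X)=1$, where $X\backslash Q_{n-r}$ may be disconnected so that Theorem~\ref{B} does not apply directly; in this case being $(\dim(X)-1)$-connected merely means that $\pi_0(i'_r(X))$ is surjective, and this holds since $\prod_1^r X$ is path connected while $F_r(X\backslash Q_{n-r})$ is non-empty. I do not expect any genuine obstacle here: the argument is purely formal once Theorem~\ref{B} and Lemma~\ref{bir} are available. The only points requiring a little care are the bookkeeping for connectivity of composites and of products of maps, together with the degenerate one-dimensional case just noted.
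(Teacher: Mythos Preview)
Your argument is correct and follows essentially the same route as the paper: factor $i'_r(X)$ as $i_r(X\backslash Q_{n-r})$ followed by the inclusion $\prod_1^r(X\backslash Q_{n-r})\lhra\prod_1^rX$, then apply Theorem~\ref{B} to the first factor and Lemma~\ref{bir} to the second. Your added remarks on connectivity of composites and products, and your separate handling of the case $\dim(X)=1$ (where $X\backslash Q_{n-r}$ need not be connected), are careful elaborations that the paper leaves implicit.
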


\begin{proof}
By definition, $i'_r(X)$ is the composition of $i_r(X\backslash Q_{n-r}): F_r(X\backslash Q_{n-r})\lhra \prod_1^rX\backslash Q_{n-r}$ and the inclusion $ \prod_1^rX\backslash Q_{n-r}\lhra \prod_1^rX$. It then suffices to apply Theorem~\ref{B} (resp.\ Lemma \ref{bir}) to the first (resp.\ second) map.
\end{proof}

Following~\cite{edmonds}, we say that the action of a topological group $G$ on $X$ is \emph{tame} if the orbit space $X/G$ is a topological manifold.
For instance, \cite[Theorem 7.10]{L} implies that any free, proper and smooth action of a compact Lie group $G$ on a smooth manifold $X$ without
boundary is tame.
\par Now let $G\times X\longrightarrow X$ be a free, tame action of a group $G$ on a topological manifold $X$ without boundary, and consider
the commutative diagram~(\ref{E}) and the leftmost commutative square of~(\ref{E'}), where in the second case, we replace $X$ by $X/G$ and $Q_{1}$
by $\bar Q_{1}$. Applying the construction of Remark~\ref{defIgg}(\ref{it:deflggb}) to each of these squares gives rise to the following
commutative diagram, where the rows are homotopy fibrations:
\begin{equation}\label{threefibres}
\begin{gathered}
\xymatrix{I_{i_{n-1}''(X)}\ar[rrr]^-{p_{i''_{n-1}(X)}}\ar[d]^{\mathcal{I}(\psi_{n-1}(X \backslash GQ_1), \prod_1^{n-1}q(X))} &&& F_{n-1}^G(X\backslash GQ_1)\ar[d]^{\psi_{n-1}(X \backslash GQ_1)}\ar@{^{(}->}[rr]^-{i''_{n-1}(X)}&& \prod_1^{n-1}X\ar[d]^{\prod_1^{n-1}q(X)}\\
I_{i'_{n-1}(X/G)}\ar[rrr]^-{p_{i'_{n-1}(X/G)}}\ar[d]^{\mathcal{I}(j_{n}'(X/G), j_{n}(X/G))} &&&F_{n-1}((X/G)\backslash\bar{Q}_1)\ar@{^{(}->}[d]^{j_{n}'(X/G)}\ar@{^{(}->}[rr]^(0.55){i'_{n-1}(X/G)}&& \prod_1^{n-1}X/G\ar@{^{(}->}[d]^{j_{n}(X/G)}\\
I_{i_n(X/G)}\ar[rrr]^-{p_{i_n(X/G)}}&&&F_n(X/G)\ar@{^{(}->}[rr]^{i_n(X/G)}&&\prod_1^nX/G,}
\end{gathered}
\end{equation}
the map $\mathcal{I}(\psi_{n-1}(X \backslash GQ_1), \prod_1^{n-1}q(X)): I_{i_{n-1}''(X)}\longrightarrow I_{i'_{n-1}(X/G)}$ gives rise to a homotopy equivalence $\Omega\mathcal{I}(\psi_{n-1}(X \backslash GQ_1), \prod_1^{n-1}q(X)):\Omega I_{i_{n-1}''(X)} \longrightarrow\Omega I_{i'_{n-1}(X/G)}$ by~(\ref{E-11}), and the map $\mathcal{I}(j_{n}'(X/G), j_{n}(X/G)): I_{i'_{n-1}(X/G)}\longrightarrow I_{i_n(X/G)}$ is a homotopy equivalence by~(\ref{E'-11}).
Let $k_{n}(X): I_{i_{n-1}''(X)} \longrightarrow I_{i_n(X/G)}$ be the weak homotopy equivalence defined by $k_{n}(X)= \mathcal{I}(j_{n}'(X/G), j_{n}(X/G))\circ \mathcal{I}(\psi_{n-1}(X \backslash GQ_1),\Pi_1^{n-1}q(X))$.

\medskip

Let $G\times X\longrightarrow X$ be a tame, free action of a Lie group $G$ on a topological manifold $X$ without boundary. This action gives
rise to a fibration $G\lhra X\longrightarrow X/G$ of manifolds, where the fibre over the point $\bar Q_1\in X/G$ is identified with
$GQ_{1}$. If the inclusion map $GQ_1\lhra X$ is null homotopic then the homomorphism $\pi_j(X) \longrightarrow \pi_j(X/G)$ induced by the quotient
map $X\longrightarrow X/G$ is injective for all $j\ge 0$, so we may regard $\pi_j(X)$ as a subgroup of $\pi_j(X/G)$. Furthermore, by~(\ref{E2}),
there is a homotopy equivalence $h: \Omega(X/G)\stackrel{\simeq}{\longrightarrow} G\times \Omega(X)$ that may be described in terms of a homotopy
between the inclusion map  $GQ_1\lhra X$ and the constant map. If $e\in G$ denotes the unit element of $G$, the restriction of the inverse of
the homotopy equivalence $h: \Omega(X/G)\stackrel{\simeq}{\longrightarrow} G\times \Omega(X)$ to $\{e\}\times \Omega(X)$ is homotopic to the loop
of the projection map $X \longrightarrow X/G$.

For the purpose of the following theorem, which is the main result of this section, for all $j\geq 0$, we identify $\pi_{j+1}(X/G)$ with $\pi_j(G)\times\pi_{j+1}(X)$ via the homotopy equivalence $h: \Omega(X/G)\stackrel{\simeq}{\longrightarrow} G\times \Omega(X)$, and $\pi_j(\prod_1^nX/G)$ with $\pi_j(\prod_1^{n-1}X/G)\times\pi_j(X/G)$. For this particular identification, we are taking $i=n$ in~(\ref{E'}).

\begin{thmX}\label{l}
Let $G\times X\longrightarrow X$ be a free, tame action of a Lie group $G$ on a connected topological manifold $X$ without boundary, and
let $Q_{1}\in X$.. Suppose that the inclusion map $i''_{n-1}(X): F_{n-1}^{G}(X\backslash GQ_1)\lhra\prod_1^{n-1}X$ is null homotopic
(this is the case if for example the inclusion map $X\backslash GQ_1\lhra X$ is null homotopic).

\begin{enumerate}[(1)]
\item\label{item:l1} The maps $\mathcal{I}(\mbox{\em id}_{F_{n-1}^G(X\backslash GQ_1)}, \mbox{\em id}_{\prod_1^{n-1}X}): I_{i_{n-1}''(X)}
\longrightarrow F_{n-1}^G(X\backslash GQ_1) \times \Omega(\prod_1^{n-1}X)$ and $\mathcal{I}(j_{n'}(X/G),j_{n}(X/G)): I_{i'_{n-1}(X/G)}\longrightarrow
I_{i_n(X/G)}$ are homotopy equivalences, and the map $\mathcal{I}(\psi_{n-1}(X \backslash GQ_1), \prod_1^{n-1}q(X)): I_{i_{n-1}''(X)}\longrightarrow
I_{i'_{n-1}(X/G)}$ is a weak homotopy equivalence.  Furthermore, if $G$ is discrete, $X$ is
$1$-connected and $\dim(X/G)\ge 3$ then the map $\psi_{n-1}(X\backslash GQ_1): F^G_{n-1}(X\backslash GQ_1)\longrightarrow F_{n-1}((X/G)\backslash \bar Q_1)$ is the universal covering.
\item\label{it:l2} If $j\leq \min(\operatorname{conn}(X), \dim (X/G)-2)$ then $\pi_j(F_{n-1}^G(X\backslash GQ_1))=0$.
\item\label{it:l3} Suppose that the homomorphism $\pi_j(X)\longrightarrow\pi_j(X/G)$ induced by the quotient map $X\longrightarrow X/G$ is injective
for all $j\geq 1$ (this is the case if for example the inclusion map $GQ_1\lhra X$ is null homotopic).
For all $j\geq 2$, up to the identification of the groups $\pi_{j-1}(I_{i_n(X/G)})$ and $\pi_{j-1}(F^G_{n-1}(X\backslash GQ_1))\times\pi_{j-1}(\Omega(\prod_1^{n-1}X))$ via the isomorphism
\begin{equation*}
\textstyle\pi_{j-1}(\mathcal{I}(\mbox{\em id}_{F_{n-1}^G(X\backslash GQ_1)}, \mbox{\em id}_{\prod_1^{n-1}X})) \circ (\pi_{j-1}(k_{n}(X)))^{-1},
\end{equation*}
the restriction to the subgroup $\pi_{j}(\prod_1^{n-1}X)$ of $\pi_{j}(\prod_1^{n}X/G)$ of the boundary homomorphism $\hat\partial_{j}: \pi_{j-1}(\Omega(\prod_1^nX/G)) \longrightarrow \pi_{j-1}(I_{i_n(X/G)})$ given by the long exact sequence in homotopy of the homotopy fibration $I_{i_n(X/G)}\xrightarrow{p_{i_n(X/G)}} F_n(X/G)\stackrel{i_n(X/G)}{\lhra} \prod_1^nX/G$
of $(\ref{threefibres})$ coincides with the inclusion of $\pi_{j}(\prod_1^{n-1}X)$ in
$\pi_{j-1}(F^G_{n-1}(X\backslash GQ_1))\times\pi_{j-1}(\Omega(\prod_1^{n-1}X))$ via the usual identification of $\pi_{j}(X)$ with
$\pi_{j-1}(\Omega(X))$.
\end{enumerate}
\end{thmX}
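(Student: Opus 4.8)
The plan is to establish the three parts in turn, using the toolkit of Section~1 together with the commutative diagram~(\ref{threefibres}) and its strict pull-back squares~(\ref{E}) and~(\ref{E'}). For part~(\ref{item:l1}), the first homotopy equivalence $\mathcal{I}(\mbox{id},\mbox{id})\colon I_{i_{n-1}''(X)}\to F_{n-1}^G(X\backslash GQ_1)\times\Omega(\prod_1^{n-1}X)$ is an immediate application of Remark~\ref{defIgg}(\ref{it:deflggd}), in particular of~(\ref{E1}), since $i_{n-1}''(X)$ is null homotopic by hypothesis. The second, $\mathcal{I}(j_n'(X/G),j_{n}(X/G))\colon I_{i'_{n-1}(X/G)}\to I_{i_n(X/G)}$, is the instance of~(\ref{E'-11}) obtained by substituting $X/G$ for $X$: the rows of~(\ref{E'}) are fibrations by Theorem~\ref{FC}, so Corollary~\ref{crabb1} applies. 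For the third, diagram~(\ref{E}) with $n-1$ in place of $n$ is a strict pull-back along the fibration $\prod_1^{n-1}q(X)$, so Corollary~\ref{LL1} gives that $\mathcal{I}(\psi_{n-1}(X\backslash GQ_1),\prod_1^{n-1}q(X))$ is a weak homotopy equivalence, the $1$-connectivity of $i'_{n-1}(X/G)$ being supplied by Corollary~\ref{cc}. For the last assertion, assume $G$ discrete, $X$ $1$-connected and $\dim(X/G)\ge 3$; identifying $F^G_{n-1}(X\backslash GQ_1)$ with the strict pull-back $F_{n-1}((X/G)\backslash\bar Q_1)\times_{\prod_1^{n-1}X/G}\prod_1^{n-1}X$ of~(\ref{E}), the map $\prod_1^{n-1}q(X)$ is a covering map, indeed the universal covering since $\prod_1^{n-1}X$ is $1$-connected; as $i'_{n-1}(X/G)$ is $2$-connected by Corollary~\ref{cc}, the homomorphism $\pi_1(i'_{n-1}(X/G))$ is an epimorphism and $F_{n-1}((X/G)\backslash\bar Q_1)$ is path connected, so Lemma~\ref{L} identifies $\psi_{n-1}(X\backslash GQ_1)$ as the universal covering.

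For part~(\ref{it:l2}), the first two statements of part~(\ref{item:l1}) show that $I_{i_{n-1}''(X)}\simeq F_{n-1}^G(X\backslash GQ_1)\times\Omega(\prod_1^{n-1}X)$ and that $I_{i_{n-1}''(X)}$ is weakly homotopy equivalent to $I_{i'_{n-1}(X/G)}$, whence $\operatorname{conn}(F_{n-1}^G(X\backslash GQ_1))\ge\operatorname{conn}(I_{i'_{n-1}(X/G)})$. By Corollary~\ref{cc} the map $i'_{n-1}(X/G)$ is $(\dim(X/G)-1)$-connected, so Remark~\ref{defIgg}(\ref{it:deflggc}) gives $\operatorname{conn}(I_{i'_{n-1}(X/G)})\ge\dim(X/G)-2$, and therefore $\pi_j(F_{n-1}^G(X\backslash GQ_1))=0$ for all $j\le\dim(X/G)-2$, and a fortiori for all $j\le\min(\operatorname{conn}(X),\dim(X/G)-2)$ (the statement being vacuous when $\dim(X/G)\le 1$).

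For part~(\ref{it:l3}), write $f=i_n(X/G)$. First, by the Puppe fibre sequence the boundary homomorphism $\hat\partial_j$ of the fibration $I_f\xrightarrow{p_f}F_n(X/G)\xrightarrow{f}\prod_1^nX/G$ equals $\pi_{j-1}$ of the boundary map $d_f\colon\Omega(\prod_1^nX/G)\to I_f$ of Remark~\ref{defIgg}(\ref{it:deflggd}). Second, under the identifications fixed before the theorem, the subgroup $\pi_j(\prod_1^{n-1}X)\subseteq\pi_j(\prod_1^nX/G)$ is the image of $\pi_j(\ell)$, where $\ell=j_n(X/G)\circ(\prod_1^{n-1}q(X))\colon\prod_1^{n-1}X\to\prod_1^nX/G$ (taking $i=n$ in~(\ref{E'})) and $\pi_j(\prod_1^{n-1}q(X))$ is injective by hypothesis; hence $\hat\partial_j|_{\pi_j(\prod_1^{n-1}X)}=\pi_{j-1}(d_f\circ\Omega\ell)$. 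Third, the two strict squares from~(\ref{E}) (with $n-1$) and~(\ref{E'}) (over $X/G$) whose vertical composite has top row $i_{n-1}''(X)$, left column $j_n'(X/G)\circ\psi_{n-1}(X\backslash GQ_1)$, bottom row $f$ and right column $\ell$ compose strictly, so the explicit formula $\mathcal{I}(g,g')(x,\gamma)=(g(x),g'\circ\gamma)$ of Remark~\ref{defIgg}(\ref{it:deflggb}) gives $k_n(X)=\mathcal{I}(j_n'(X/G)\circ\psi_{n-1}(X\backslash GQ_1),\ell)$ together with the naturality identity $k_n(X)\circ d_{i_{n-1}''(X)}=d_f\circ\Omega\ell$; consequently $(\pi_{j-1}(k_n(X)))^{-1}\circ\hat\partial_j|_{\pi_j(\prod_1^{n-1}X)}=\pi_{j-1}(d_{i_{n-1}''(X)})$. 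Finally, applying $\pi_{j-1}(\mathcal{I}(\mbox{id},\mbox{id}))$ and using once more Remark~\ref{defIgg}(\ref{it:deflggd}), which identifies $\mathcal{I}(\mbox{id},\mbox{id})\circ d_{i_{n-1}''(X)}$ with the inclusion of $\Omega(\prod_1^{n-1}X)$ as the second factor of $F_{n-1}^G(X\backslash GQ_1)\times\Omega(\prod_1^{n-1}X)$, one obtains exactly the inclusion of $\pi_j(\prod_1^{n-1}X)=\pi_{j-1}(\Omega(\prod_1^{n-1}X))$ into $\pi_{j-1}(F_{n-1}^G(X\backslash GQ_1))\times\pi_{j-1}(\Omega(\prod_1^{n-1}X))$, which is the claim.

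The step I expect to be the main obstacle is the bookkeeping in part~(\ref{it:l3}): one must check that $d$ is natural with respect to the whisker maps $\mathcal{I}(g,g')$ (transparent in the strictly commutative case, but requiring care with basepoints), that $k_n(X)$ really coincides with $\mathcal{I}$ of the composite square, and that the identifications of $\pi_j(\prod_1^{n-1}X)$ as a subgroup of $\pi_j(\prod_1^nX/G)$, of $\pi_{j-1}(I_f)$ with a product via $\pi_{j-1}(\mathcal{I}(\mbox{id},\mbox{id}))\circ(\pi_{j-1}(k_n(X)))^{-1}$, and of $\pi_j$ with $\pi_{j-1}\Omega$ are mutually compatible. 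A secondary point to watch is the implicit hypothesis $\dim(X/G)\ge 2$ (automatic in all the applications of Section~3) needed for Corollary~\ref{cc} to deliver the $1$-connectivity used in parts~(\ref{item:l1}) and~(\ref{it:l2}).
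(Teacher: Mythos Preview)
Your proof is correct. Parts~(\ref{item:l1}) and~(\ref{it:l3}) follow essentially the same route as the paper: in part~(\ref{item:l1}) you invoke~(\ref{E1}),~(\ref{E'-11}), Corollary~\ref{LL1} and Lemma~\ref{L} in the same order (your verification that $i'_{n-1}(X/G)$ is $1$-connected via Corollary~\ref{cc} is exactly the hypothesis Corollary~\ref{LL1} asks for; the paper instead asserts $1$-connectivity of $\psi_{n-1}(X\backslash GQ_1)$), and in part~(\ref{it:l3}) your naturality identity $k_n(X)\circ d_{i''_{n-1}(X)}=d_{i_n(X/G)}\circ\Omega\ell$ is a clean abstract repackaging of the explicit element-by-element computation the paper performs around~(\ref{eq:dhatomega}).

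For part~(\ref{it:l2}) you take a genuinely different and shorter route. The paper does \emph{not} use the null-homotopy hypothesis for this part: it compares the long exact homotopy sequences of the two horizontal fibrations in
\[
\xymatrix{\prod_1^{n-1}G\ar[r]\ar@{=}[d] & F_{n-1}^G(X\backslash GQ_1)\ar[r]\ar[d] & F_{n-1}((X/G)\backslash\bar Q_1)\ar[d]\\
\prod_1^{n-1}G\ar[r] & \prod_1^{n-1}X\ar[r] & \prod_1^{n-1}X/G,}
\]
uses Corollary~\ref{cc} on the right-hand column and the bound $j\le\operatorname{conn}(X)$ to control the boundary map of the lower row, and then a diagram chase forces $\pi_j(F_{n-1}^G(X\backslash GQ_1))=0$. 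Your approach instead reads the vanishing off directly from the product decomposition of part~(\ref{item:l1}) together with the connectivity of $I_{i'_{n-1}(X/G)}$ given by Corollary~\ref{cc} and Remark~\ref{defIgg}(\ref{it:deflggc}). This is shorter and in fact yields the stronger conclusion $\pi_j(F_{n-1}^G(X\backslash GQ_1))=0$ for all $j\le\dim(X/G)-2$, with no $\operatorname{conn}(X)$ constraint needed. The paper's argument, by contrast, has the advantage of being independent of the null-homotopy hypothesis on $i''_{n-1}(X)$.
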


\begin{proof}\mbox{}
\begin{enumerate}[(1)]
\item By hypothesis, $i_{n-1}''(X)$ is null homotopic. Let $H: F_{n-1}^G(X\backslash GQ_1) \times I \longrightarrow \prod_1^{n-1}X$ be a homotopy
between $i_{n-1}''(X)$ and the constant map at the basepoint of $\prod_1^{n-1}X$, and let $\varphi(x): I \longrightarrow (\prod_1^{n-1}X)^{I}$
be the path given by $\varphi(x)(t)=H(x,t)$. Applying Remark~\ref{defIgg}(\ref{it:deflggd}) to the homotopy fibration
\begin{equation}\label{eq:homofibreidoubleprime}
I_{i_{n-1}''(X)}\xrightarrow{p_{i''_{n-1}(X)}} F_{n-1}^G(X\backslash GQ_1) \xrightarrow{i''_{n-1}(X)} \prod_1^{n-1} X,
\end{equation}
we obtain a homotopy equivalence:
\begin{equation}\label{eq:Iiprimeprime}
\textstyle\mathcal{I}(\text{\tiny id}_{F_{n-1}^G(X\backslash GQ_1)}, \text{\tiny id}_{\prod_1^{n-1}X}): I_{i_{n-1}''(X)} \longrightarrow F_{n-1}^G(X\backslash GQ_1) \times \Omega(\prod_1^{n-1}X)
\end{equation}
defined by $\mathcal{I}(\text{\tiny id}_{F_{n-1}^G(X\backslash GQ_1)}, \text{\tiny id}_{\prod_1^{n-1}X})(x,\gamma)=(x, \varphi(x)^{-1}\ast \gamma)$
for all $(x,\gamma)\in I_{i_{n-1}''(X)}$.
Since the map $\psi_{n-1}(X \backslash GQ_1) : F_{n-1}^G(X \backslash GQ_1)\longrightarrow F_{n-1}((X/G)\backslash\bar{Q}_1)$ is $1$-connected,
the map $\mathcal{I}(\psi_{n-1}(X \backslash GQ_1), \prod_1^{n-1}q(X)): I_{i_{n-1}''(X)}
\longrightarrow I_{i'_{n-1}(X/G)}$ is a weak homotopy equivalence by Corollary~\ref{LL1} and~(\ref{E-11}). Further, by~(\ref{E'-11}), the map $\mathcal{I}(j_{n'}(X/G),j_{n}(X/G)): I_{i'_{n-1}(X/G)}\longrightarrow I_{i_n(X/G)}$ is a homotopy equivalence, and this proves the first part of the statement.

Assume that $G$ is a discrete group. Since $X$ is a $1$-connected manifold, the quotient map
$\prod_1^{n-1}q(X) :\prod_{1}^{n-1} X\longrightarrow \prod_{1}^{n-1} X/G$ is the universal covering. Now $\dim(X/G)\geq 3$, and applying
Corollary~\ref{cc}, we see that the homomorphism $\pi_1(i'_{n-1}(X/G)) : \pi_1(F_{n-1}((X/G)\backslash \bar{Q}_1))\longrightarrow\pi_1(\prod_1^{n-1} X/G)$
is an isomorphism. It follows from diagram~(\ref{E}) and Lemma~\ref{L} that $\psi_{n-1}(X\backslash GQ_1) : F_{n-1}^G(X\backslash GQ_1)\longrightarrow F_{n-1}((X/G)\backslash \bar Q_1)$
is the universal covering.

\item Let $j\leq \min(\operatorname{conn}(X), \dim(X/G)-2)$. Applying the long exact homotopy sequence to the following commutative diagram whose rows are fibrations:
\begin{equation}\label{eq:connx}
\begin{gathered}
\xymatrix@1{\prod_1^{n-1}G \ar@{=}[d]\ar[r] & F_{n-1}^G(X\backslash GQ_1) \ar[rrr]^-{\psi_{n-1}(X \backslash GQ_1)}\ar@{^{(}->}[d]_-{i''_{n-1}(X)} & &&F_{n-1}((X/G)\backslash\bar{Q}_1)\ar@{^{(}->}[d]^{i'_{n-1}(X/G)}\\
\prod_1^{n-1}G\ar[r] & \prod_1^{n-1}X\ar[rrr]^{\prod_1^{n-1}q(X)} & &&\prod_1^{n-1}X/G}
\end{gathered}
\end{equation}
yields the following commutative diagram whose rows are exact:
\begin{equation}\label{eq:les}
\begin{gathered}
\xymatrix@C-=0.51cm@1{\pi_{j}(\prod_1^{n-1}G) \ar[r] \ar@{=}[d] & \pi_{j}(F_{n-1}^G(X\backslash GQ_1)) \ar[r]^*+<0.8em>-{\scriptstyle\pi_{j}(\psi_{n-1}(X \backslash GQ_1))} \ar[d]_-{\pi_{j}(i''_{n-1}(X))} & \pi_{j}(F_{n-1}((X/G)\backslash\bar{Q}_1)) \ar[r]^-{\partial_{j}} \ar[d]^-{\pi_{j}(i'_{n-1}(X/G))} & \pi_{j-1}(\prod_1^{n-1}G) \ar@{=}[d]\\
\pi_{j}(\prod_1^{n-1}G) \ar[r]&\pi_{j}(\prod_1^{n-1} X) \ar[r]^*+<0.5em>-{\scriptstyle\pi_{j}(\prod_1^{n-1}q(X))} & \pi_{j}(\prod_1^{n-1}X/G) \ar[r]_-{\bar{\partial}_j} & \pi_{j-1}(\prod_1^{n-1} G),}
\end{gathered}
\end{equation}
where $\partial_{j}: \pi_{j}(F_{n-1}((X/G)\backslash\bar{Q}_1)) \longrightarrow \pi_{j-1}(\prod_1^{n-1}G)$ (resp.\ $\bar\partial_{j}: \pi_{j}(\prod_1^{n-1}X/G) \longrightarrow \pi_{j-1}(\prod_1^{n-1} G)$) is the boundary homomorphism corresponding to the first (resp.\ second) row of~(\ref{eq:connx}). Since $j\leq \dim (X/G)-2$, by applying Corollary~\ref{cc} to the manifold $X/G$, we see that the homomorphism $\pi_{k}(i'_{n-1}(X/G))$ is surjective for all $k\leq j+1$ and is an isomorphism if $k\leq j$. The fact that $j\leq \operatorname{conn} (X)$ implies  that the homomorphism $\pi_{k}(\prod_1^{n-1}X/G) \longrightarrow \pi_{k-1}(\prod_1^{n-1} G)$ is surjective for all $k\leq j+1$ and is an isomorphism if $k\leq j$. By the commutativity of the diagram~(\ref{eq:les}), it follows that the boundary homomorphism $\partial_{k} :\pi_{k}(F_{n-1}((X/G)\backslash\bar{Q}_1)) \longrightarrow \pi_{k-1}(\prod_1^{n-1}G)$ is surjective for all $k\leq j+1$ and is an isomorphism if $k\leq j$, and so by exactness of the uppermost row of~(\ref{eq:les}), $\pi_{j}(F_{n-1}^G(X\backslash GQ_1))=0$ for all $j\leq \min(\operatorname{conn}(X), \dim(X/G)-2)$.

\item Let $\tilde{d}: \Omega(\prod_1^{n-1}X)\longrightarrow I_{i_{n-1}''(X)}$ denote the boundary map of the homotopy
fibration~(\ref{eq:homofibreidoubleprime}), and let $\tilde\partial_j: \pi_j(\prod_1^{n-1}X)\longrightarrow \pi_{j-1}(F^G_{n-1}(X\backslash GQ_1))\times\pi_{j-1}(\Omega(\prod_1^{n-1}X))$
denote the composition of the associated boundary homomorphism $\pi_{j-1}(\tilde{d})$ with the isomorphism
$\pi_{j-1}(\mathcal{I}(\text{\tiny id}_{F_{n-1}^G(X\backslash GQ_1)}, \text{\tiny id}_{\prod_1^{n-1}X}))$ induced by the homotopy equivalence
given by~(\ref{eq:Iiprimeprime}). Identifying $\pi_{j}(X)$ with $\pi_{j-1}(\Omega(X))$, and applying Remark~\ref{defIgg}(\ref{it:deflggd}), it follows that
$\tilde\partial_j$ coincides with the inclusion homomorphism $$\pi_j\left(\prod_1^{n-1}X\right)\lhra\pi_{j-1}(F^G_{n-1}(X\backslash GQ_1))\times\pi_{j-1}\left(\Omega\left(\prod_1^{n-1}X\right)\right).$$
The first and third rows of diagram~(\ref{threefibres}) give rise to the following commutative diagram of homotopy fibrations:
\begin{equation}\label{eq:cdhomfib1}
\begin{gathered}
\xymatrix{I_{i_{n-1}''(X)}\ar[rr]^-{p_{i_{n-1}''(X)}}\ar[d]^{k_{n}(X)} && F_{n-1}^G(X\backslash GQ_1)\ar[d]^{j_{n}'(X/G)\circ \psi_{n-1}(X\backslash GQ_1)}\ar@{^{(}->}[rr]^-{i''_{n-1}(X)}&& \prod_1^{n-1}X\ar[d]^{j_{n}(X/G)\circ \prod_1^{n-1}q(X)}\\
I_{i_n(X/G)}\ar[rr]_-{p_{i_n(X/G)}} &&F_n(X/G)\ar@{^{(}->}[rr]_ {i_n(X/G)}&&\prod_1^nX/G,}
\end{gathered}
\end{equation}
where the leftmost vertical map $k_{n}(X): I_{i_{n-1}''(X)} \longrightarrow I_{i_n(X/G)}$ is defined by $k_{n}(X)= \mathcal{I}(j_{n}'(X/G), j_{n}(X/G))\circ \mathcal{I}(\psi_{n-1}(X \backslash GQ_1),\Pi_1^{n-1}q(X))$, and is a weak homotopy equivalence by part~(\ref{item:l1}). Now let $\hat{d}: \Omega(\prod_1^nX/G) \longrightarrow I_{i_n(X/G)}$ be the boundary map of the lower homotopy fibration of~(\ref{eq:cdhomfib1}), and let $$\hat\partial_{j}: \pi_{j-1}(\Omega(\prod_1^nX/G)) \longrightarrow \pi_{j-1}(I_{i_n(X/G)})$$ be the induced boundary homomorphism. 

Consider the element $\omega=(\omega_{1}, \ldots, \omega_{n-1}, c_{q_{n-1}}) \in \Omega(\prod_1^{n-1}X)$. The map $\prod_{1}^{n} q(X)$ induces a map $\Omega \prod_{1}^{n} q(X): \Omega (\prod_{1}^{n} X) \longrightarrow \Omega (\prod_{1}^{n} X/G)$, so
\begin{align}
\textstyle(\Omega \prod_{1}^{n} q(X))(\omega,c_{q_{n}})&= (\bar\omega_{1}, \ldots, \bar\omega_{n-1}, c_{\bar{q}_{n}}) \in \textstyle\Omega(\prod_1^nX/G )\; \text{and} \notag\\
\textstyle\hat{d} \circ (\Omega \prod_{1}^{n} q(X))(\omega)&=(\bar{q}_{1},\ldots, \bar{q}_{n}, \bar\omega_{1}, \ldots, \bar\omega_{n-1}, c_{\bar{q}_{n}})\in I_{i_{n}(X/G)}\label{eq:dhatomega}
\end{align}
by Remark~\ref{defIgg}(\ref{it:deflggd}). On the other hand, $(q_{1}, \ldots, q_{n-1}, \omega_{1}, \ldots,\omega_{n-1})\in I_{i_{n-1}''(X)}$, and by Remark~\ref{defIgg}(\ref{it:deflggb}), the definition of $k_{n}(X)$,~(\ref{eq:Iiprimeprime}) and~(\ref{eq:dhatomega}), we have:
\begin{gather*}
k_{n}(X)(q_{1}, \ldots, q_{n-1}, \omega_{1}, \ldots,\omega_{n-1})=\textstyle\hat{d} \circ (\Omega \prod_{1}^{n} q(X))(\omega)\;\text{and}\\
\mathcal{I}(\text{\tiny id}_{F_{n-1}^G(X\backslash GQ_1)}, \text{\tiny id}_{\prod_1^{n-1}X})(q_{1}, \ldots, q_{n-1}, \omega_{1}, \ldots,\omega_{n-1})= (q_{1}, \ldots, q_{n-1}, \omega_{1}, \ldots,\omega_{n-1}).
\end{gather*}
Thus the image of $([\omega], [c_{q_{n}}])\in \pi_j(\prod_1^{n-1}X)\times\pi_j(X)$ under the composition
\begin{equation*}
\textstyle\pi_{j-1}(\mathcal{I}(\text{\tiny id}_{F_{n-1}^G(X\backslash GQ_1)}, \text{\tiny id}_{\prod_1^{n-1}X})) \circ (\pi_{j-1}(k_{n}(X)))^{-1}\circ \hat\partial_{j}\circ \pi_{j}(\prod_1^n q(X))
\end{equation*}
is equal to $([c_{q_{1}}],\ldots, [c_{q_{n-1}}], [\omega])$. In particular, up to identification of $\pi_{j-1}(I_{i_n(X/G)})$ and $\pi_{j-1}(F^G_{n-1}(X\backslash GQ_1))\times\pi_{j-1}(\Omega(\prod_1^{n-1}X))$ via the  isomorphism
\begin{equation*}
\textstyle\pi_{j-1}(\mathcal{I}(\text{\tiny id}_{F_{n-1}^G(X\backslash GQ_1)}, \text{\tiny id}_{\prod_1^{n-1}X})) \circ (\pi_{j-1}(k_{n}(X)))^{-1},
\end{equation*}
the restriction of the homomorphism $\hat\partial_{j}: \pi_{j-1}(\Omega(\prod_1^nX/G)) \longrightarrow \pi_{j-1}(I_{i_n(X/G)})$ to the subgroup $\pi_{j}(\prod_1^{n-1}X)$ of $\pi_{j}(\prod_1^{n}X/G)$ coincides with the inclusion of $\pi_{j}(\prod_1^{n-1}X)$ in $\pi_{j-1}(F^G_{n-1}(X\backslash GQ_1))\times\pi_{j-1}(\Omega(\prod_1^{n-1}X))$
which completes the proof.\qedhere
\end{enumerate}

\end{proof}

\medskip

Theorem~\ref{l} implies the following corollary.

\begin{CorX}\label{Corr} Let $X$ be a connected topological manifold without boundary such that $\operatorname{\text{dim}}(X)\geq 3$, let $\tilde{X}$ be its universal covering, and let $Q_{1}\in \tilde{X}$.
Suppose that the map $i''_{n-1}(\tilde{X}) : F_{n-1}^{\pi_1(X)}(\tilde{X}\backslash\pi_1(X)Q_1)\lhra\prod_1^{n-1}\tilde{X}$ is null homotopic.
\begin{enumerate}[(1)]
\item The spaces $F_{n-1}^{\pi_1(X)}(\tilde{X}\backslash\pi_1(X)Q_1)$ and $\widetilde{F_{n-1}(X\backslash \bar{Q}_1)}$ are homeomorphic,
and there exists a homotopy equivalence $I_{i''_{n-1}(\tilde{X})}\simeq \widetilde{F_{n-1}(X\backslash \bar{Q}_1)}\times\Omega(\prod_1^{n-1}\tilde{X})$,
and a weak homotopy equivalence between $I_{i_{n}(X)}$ and $\widetilde{F_{n-1}(X\backslash \bar{Q}_1)}\times\Omega(\prod_1^{n-1}\tilde{X})$.

\item Let $j\geq 2$. Up to the identification of the groups
$\pi_{j}(\prod_1^n\tilde{X})$ with $\pi_{j}(\prod_1^nX)$ via the
isomorphism $\pi_{j}(q(\tilde{X})): \pi_{j}(\tilde{X}) \longrightarrow
\pi_{j}(X)$ and the identification of the groups
$\pi_{j-1}(I_{i_n(X)})$ and
$\pi_{j-1}(F^{\pi_1(X)}_{n-1}(\tilde{X}\backslash
\pi_1(X)Q_1))\times\pi_{j-1}(\Omega(\prod_1^{n-1}\tilde{X}))$ via the
isomorphism given in the statement of {\em Theorem~\ref{l}(\ref{it:l3})},
the restriction of the boundary homomorphism $\hat\partial_{j}:
\pi_{j}(\prod_1^n \tilde{X}) \longrightarrow \pi_{j-1}(I_{i_n(X)})$ of
the homotopy
long exact sequence of the homotopy fibration
$$I_{i_n(X)}\xrightarrow{p_{i_n(X)}} F_n(X)\stackrel{i_n(X)}{\lhra}
\prod_1^nX$$ to the subgroup $\pi_{j}(\prod_1^{n-1}\tilde{X})$ of
$\pi_{j}(\prod_1^{n}\tilde{X})$ coincides with the inclusion of
$\pi_{j}(\prod_1^{n-1}\tilde{X})$ in
$\pi_{j-1}(F^{\pi_1(X)}_{n-1}(\tilde{X}\backslash
\pi_1(X)Q_1))\times\pi_{j-1}(\Omega(\prod_1^{n-1}\tilde{X}))$ via the
usual identification of $\pi_{j}(\tilde{X})$ with
$\pi_{j-1}(\Omega(\tilde{X}))$.
\end{enumerate}
\end{CorX}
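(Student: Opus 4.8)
The plan is to derive Corollary~\ref{Corr} as the special case of Theorem~\ref{l} in which $G$ is the fundamental group $\pi_1(X)$ acting on the universal covering $\tilde X$ by deck transformations, so that $\tilde X/\pi_1(X)$ is (homeomorphic to) $X$. First I would verify that this substitution meets the hypotheses of Theorem~\ref{l}: the space $\tilde X$ is a connected topological manifold without boundary, being a covering space of one, and $\dim(\tilde X)=\dim(X)\geq 3$; the group $\pi_1(X)$ is discrete, hence a ($0$-dimensional) Lie group; the deck action is free, and it is tame precisely because its orbit space is the manifold $X$. Moreover, the standing hypothesis of the corollary — that $i''_{n-1}(\tilde X)$ is null homotopic — is exactly the null-homotopy hypothesis of Theorem~\ref{l} after this substitution. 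It then remains to translate the three conclusions of Theorem~\ref{l} into the language of $\tilde X$ and $X$.

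For part~(1): since $\pi_1(X)$ is discrete, $\tilde X$ is $1$-connected, and $\dim(\tilde X/\pi_1(X))=\dim(X)\geq 3$, the final assertion of Theorem~\ref{l}(\ref{item:l1}) shows that $\psi_{n-1}(\tilde X\backslash\pi_1(X)Q_1)\colon F_{n-1}^{\pi_1(X)}(\tilde X\backslash\pi_1(X)Q_1)\longrightarrow F_{n-1}(X\backslash\bar Q_1)$ is a universal covering; since $F_{n-1}(X\backslash\bar Q_1)$ is a connected manifold, its universal covering is unique up to homeomorphism over the base, which gives the homeomorphism $F_{n-1}^{\pi_1(X)}(\tilde X\backslash\pi_1(X)Q_1)\cong\widetilde{F_{n-1}(X\backslash\bar Q_1)}$. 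Combining this with the homotopy equivalence $\mathcal{I}(\mathrm{id},\mathrm{id})\colon I_{i''_{n-1}(\tilde X)}\longrightarrow F_{n-1}^{\pi_1(X)}(\tilde X\backslash\pi_1(X)Q_1)\times\Omega(\prod_1^{n-1}\tilde X)$ from the first part of Theorem~\ref{l}(\ref{item:l1}) yields $I_{i''_{n-1}(\tilde X)}\simeq\widetilde{F_{n-1}(X\backslash\bar Q_1)}\times\Omega(\prod_1^{n-1}\tilde X)$. Finally, as $\tilde X/\pi_1(X)=X$, the map $k_n(\tilde X)\colon I_{i''_{n-1}(\tilde X)}\longrightarrow I_{i_n(X)}$ — a composite of the weak homotopy equivalence $\mathcal{I}(\psi_{n-1},\prod_1^{n-1}q)$ with the homotopy equivalence $\mathcal{I}(j'_n,j_n)$ supplied by Theorem~\ref{l}(\ref{item:l1}) — is a weak homotopy equivalence, and precomposing it with the homotopy equivalence just obtained produces the asserted weak homotopy equivalence between $I_{i_n(X)}$ and $\widetilde{F_{n-1}(X\backslash\bar Q_1)}\times\Omega(\prod_1^{n-1}\tilde X)$.

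For part~(2) I would first check the extra hypothesis of Theorem~\ref{l}(\ref{it:l3}): the homomorphism $\pi_j(\tilde X)\longrightarrow\pi_j(\tilde X/\pi_1(X))=\pi_j(X)$ induced by the covering projection is an isomorphism for $j\geq 2$ and is vacuously injective for $j=1$ (as $\pi_1(\tilde X)=0$), hence injective for all $j\geq 1$. Part~(2) is then Theorem~\ref{l}(\ref{it:l3}) transcribed under the substitution, where one additionally uses the isomorphism $\pi_j(q(\tilde X))$ and its $n$-fold product to identify $\pi_j(\prod_1^n\tilde X)$ with $\pi_j(\prod_1^n X)$, and notes that since $\pi_1(X)$ is discrete the ambient identification $\pi_{j+1}(\tilde X/\pi_1(X))\cong\pi_j(\pi_1(X))\times\pi_{j+1}(\tilde X)$ preceding Theorem~\ref{l} collapses to this same isomorphism. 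The only genuine work — and hence the main obstacle — is bookkeeping: one must check that, under these identifications and for the choice $i=n$ in~(\ref{E'}), the subgroup $\pi_j(\prod_1^{n-1}\tilde X)$ of $\pi_j(\prod_1^n\tilde X)$ corresponds to the subgroup appearing in Theorem~\ref{l}(\ref{it:l3}), and that the isomorphism used to identify the target $\pi_{j-1}(I_{i_n(X)})$ is exactly the one in that statement. No topological input beyond Theorem~\ref{l} itself is required.
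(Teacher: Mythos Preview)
Your proposal is correct and follows essentially the same route as the paper: both arguments specialise Theorem~\ref{l} to the deck-transformation action of $\pi_1(X)$ on $\tilde X$, check that the hypotheses are met (discreteness, tameness, the null-homotopy assumption, and for part~(2) the injectivity of $\pi_j(q(\tilde X))$), and then read off the conclusions. The paper additionally remarks that $\pi_1(X)$ is countable (since $X$ is a manifold), which is what makes a discrete group a $0$-dimensional Lie group; you may wish to include this detail.
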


\begin{proof}
In the whole of the proof, we shall replace the manifold $X$ (resp.\ the group $G$) of Theorem~\ref{l} by $\tilde{X}$ (resp.\ by $\pi_{1}(X)$).
The group $\pi_1(X)$ is discrete, and since $X$ is a manifold, $\pi_1(X)$ is countable and so is a $0$-dimensional Lie group.
\begin{enumerate}[(1)]
\item Since $\pi_{1}(X)$ is discrete, $\tilde{X}$ is $1$-connected and $\dim(X)\geq 3$, it follows from the second part of the statement of
Theorem~\ref{l}(\ref{item:l1}) that $F_{n-1}^{\pi_1(X)}(\tilde{X}\backslash\pi_1(X)Q_1)$ is homeomorphic to $\widetilde{F_{n-1}(X\backslash \bar{Q}_1)}$. Applying this to the first part of Theorem~\ref{l}(\ref{item:l1}) yields the rest of the statement.

\item Since $q(\tilde{X}):\tilde{X}\longrightarrow X$ is a covering
map, the induced homomorphism $\pi_{j}(q(\tilde{X})):
\pi_j(\tilde{X})\longrightarrow\pi_j(X)$ is injective for all $j\geq
1$ and is an isomorphism for all $j\geq 2$. Up to the identification
of $\pi_{j}(\prod_1^n\tilde{X})$ with $\pi_{j}(\prod_1^nX)$ via this
isomorphism, the result then follows directly from
Theorem~\ref{l}(\ref{it:l3}).\qedhere
\end{enumerate}
\end{proof}

\medskip

The following result brings together various descriptions for the homotopy type of the homotopy fibre $I_{i_n(X)}$, where $X$ is a topological manifold without boundary whose universal covering is contractible.

\begin{proX}\label{AS}
Let $X$ be a connected topological manifold without boundary such that $\operatorname{\text{dim}}(X)\geq 3$ and whose universal covering $\tilde{X}$ is
contractible, and let $Q_1\in \tilde{X}$. Then the homotopy fibre $I_{i_n(X)}$ of the map $i_{n}(X):F_{n}(X) \longrightarrow \prod_1^nX$ is weakly homotopy equivalent to each of the following six spaces: $\widetilde{F_{n-1}(X \backslash \bar Q_1)}$, $F_n(X)\times_{\prod_1^nX}\prod_1^n \tilde X$, $\widetilde{F_n(X)}$, $F_n^{\pi_1(X)}(\tilde X)$,  $F_{n-1}(X\backslash \bar Q_1)\times_{\prod_1^{n-1}X}\prod_1^{n-1} \tilde X$ and $F_{n-1}^{\pi_1(X)}(\tilde X\backslash \pi_1(X)Q_1)$.
\end{proX}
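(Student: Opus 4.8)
The plan is to identify each of the six spaces either with a universal covering or with a strict pull-back that has already been shown to model a homotopy fibre, and then to invoke Corollaries~\ref{Corr} and~\ref{LL1}. Throughout, regard $\pi_1(X)$ as acting freely on $\tilde X$ by deck transformations; this action is tame (the quotient being $X$), and $\pi_1(X)$ is countable, hence a $0$-dimensional Lie group, so Theorem~\ref{l} and Corollary~\ref{Corr} apply with $\tilde X$ in place of $X$ and $\pi_1(X)$ in place of $G$, giving $\tilde X/\pi_1(X)=X$. The key observation is that, since $\tilde X$ is contractible, so is $\prod_1^m\tilde X$ for every $m$; consequently every map with target $\prod_1^m\tilde X$ is null homotopic and $\Omega(\prod_1^m\tilde X)\simeq\ast$. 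In particular the inclusion $i''_{n-1}(\tilde X)$ is null homotopic, so Corollary~\ref{Corr}(1) applies and yields a weak homotopy equivalence between $I_{i_n(X)}$ and $\widetilde{F_{n-1}(X\backslash\bar Q_1)}\times\Omega(\prod_1^{n-1}\tilde X)\simeq\widetilde{F_{n-1}(X\backslash\bar Q_1)}$, together with a homeomorphism $F_{n-1}^{\pi_1(X)}(\tilde X\backslash\pi_1(X)Q_1)\approx\widetilde{F_{n-1}(X\backslash\bar Q_1)}$. This settles the first and sixth spaces.

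Next I would treat the pull-back descriptions. Since $\tilde X$ is $1$-connected, for each $m$ the map $\prod_1^m q(\tilde X):\prod_1^m\tilde X\longrightarrow\prod_1^m X$ is the universal covering. By Theorem~\ref{B} the inclusion $i_n(X):F_n(X)\lhra\prod_1^n X$ is $(\dim(X)-1)$-connected, and by Corollary~\ref{cc} the inclusion $i'_{n-1}(X):F_{n-1}(X\backslash\bar Q_1)\lhra\prod_1^{n-1}X$ is $(\dim(X)-1)$-connected; as $\dim(X)\ge 3$, both induce isomorphisms, in particular epimorphisms, on $\pi_1$, and both $F_n(X)$ and $F_{n-1}(X\backslash\bar Q_1)$ are path connected. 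Hence Lemma~\ref{L} shows that the projections $F_n(X)\times_{\prod_1^n X}\prod_1^n\tilde X\to F_n(X)$ and $F_{n-1}(X\backslash\bar Q_1)\times_{\prod_1^{n-1}X}\prod_1^{n-1}\tilde X\to F_{n-1}(X\backslash\bar Q_1)$ are the respective universal coverings. Therefore the second and third spaces coincide with $\widetilde{F_n(X)}$, and the fifth space is $\widetilde{F_{n-1}(X\backslash\bar Q_1)}$, which by the first paragraph is homeomorphic to the first and sixth spaces. Finally, applying the discussion following diagram~(\ref{pullbackFGnX}) with $\tilde X$ in place of $X$ and $\pi_1(X)$ in place of $G$, the fourth space $F_n^{\pi_1(X)}(\tilde X)$ is homeomorphic to the pull-back $F_n(X)\times_{\prod_1^n X}\prod_1^n\tilde X$, that is, to the second space; equally, diagram~(\ref{E}) identifies the sixth space with the fifth.

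It remains to link $I_{i_n(X)}$ with the second space. I would apply Corollary~\ref{LL1} to the fibration $\prod_1^n q(\tilde X):\prod_1^n\tilde X\longrightarrow\prod_1^n X$ (a covering map, hence a fibration) and the map $i_n(X):F_n(X)\longrightarrow\prod_1^n X$: the strict pull-back is exactly $F_n(X)\times_{\prod_1^n X}\prod_1^n\tilde X$, and since $i_n(X)$ is $1$-connected by Theorem~\ref{B}, Corollary~\ref{LL1} produces a weak homotopy equivalence $I_{g'}\longrightarrow I_{i_n(X)}$, where $g'$ is the projection of the pull-back onto $\prod_1^n\tilde X$. As $g'$ has contractible target it is null homotopic, so Remark~\ref{defIgg}(\ref{it:deflggd}), i.e.\ equation~(\ref{E1}), gives a homotopy equivalence $I_{g'}\simeq\bigl(F_n(X)\times_{\prod_1^n X}\prod_1^n\tilde X\bigr)\times\Omega(\prod_1^n\tilde X)\simeq F_n(X)\times_{\prod_1^n X}\prod_1^n\tilde X$. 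Combining with the previous paragraphs, $I_{i_n(X)}$ is weakly homotopy equivalent to all six spaces.

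I do not anticipate a deep obstacle: the proposition is essentially an assembly of Theorem~\ref{B} and Corollaries~\ref{cc}, \ref{LL1} and~\ref{Corr}, together with Lemma~\ref{L}. The only points needing care are (i) checking that the connectivity bounds of Theorem~\ref{B} and Corollary~\ref{cc}, under the hypothesis $\dim(X)\ge 3$, are strong enough to force $\pi_1$-surjectivity of the relevant inclusions, so that Lemma~\ref{L} identifies the pull-backs along $\prod_1^m q(\tilde X)$ with universal coverings; and (ii) consistently using the contractibility of $\tilde X$ to collapse both $\Omega(\prod_1^m\tilde X)$ and $\prod_1^m\tilde X$ up to homotopy in the formulas coming from Corollaries~\ref{Corr} and~\ref{LL1}.
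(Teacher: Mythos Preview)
Your proposal is correct and follows essentially the same route as the paper's proof: both arguments use the contractibility of $\prod_1^m\tilde X$ to render the relevant inclusions null homotopic, invoke Lemma~\ref{L} (via the $\pi_1$-isomorphisms coming from Theorem~\ref{B}/Corollary~\ref{cc}) to identify the pull-backs as universal coverings, apply the pull-back diagrams~(\ref{pullbackFGnX}) and~(\ref{E}) to identify the orbit configuration spaces with the pull-backs, and use Corollary~\ref{LL1} together with~(\ref{E1}) to link $I_{i_n(X)}$ to the pull-back $F_n(X)\times_{\prod_1^nX}\prod_1^n\tilde X$. The only cosmetic difference is that you package the first step via Corollary~\ref{Corr}(1) directly, whereas the paper unwinds this step through Theorem~\ref{l}(\ref{item:l1}) and the chain $I_{i_n(X)}\simeq I_{i'_{n-1}(X)}\leftarrow I_{i''_{n-1}(\tilde X)}$; since Corollary~\ref{Corr} is precisely that repackaging, the two arguments are the same in substance.
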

\begin{proof}
Let $p(\tilde X) : \tilde X\longrightarrow X$ denote the universal covering map.
\begin{enumerate}[\textbullet]
\item Since the space $\prod_1^n \tilde X$ is contractible, the map $i''_{n-1}(\tilde{X}): F_{n-1}^{\pi_{1}(X)}(\tilde{X}\backslash \pi_{1}(X) Q_1)\lhra\prod_1^{n-1}\tilde{X}$ is null homotopic. So by~(\ref{E1}), $I_{i''_{n-1}(\tilde{X})}\simeq F_{n-1}^{\pi_{1}(X)}(\tilde{X}\backslash  \pi_{1}(X) Q_1)$. Further, $\pi_{1}(X)$ is discrete, $\tilde{X}$ is $1$-connected and $\dim(X)\geq 3$, it follows from Theorem~\ref{l}(\ref{item:l1}) that $F^{\pi_{1}(X)}_{n-1}(\tilde{X}\backslash  \pi_{1}(X) Q_1)\simeq \widetilde{F_{n-1}(X\backslash \bar Q_1)}$.

\item The strict pull-back~(\ref{E}) gives rise to a homotopy equivalence $F_{n-1}^{\pi_1(X)}(\tilde X\backslash \pi_1(X)Q_1) \simeq F_{n-1}(X\backslash \bar Q _1)\times_{\prod_1^{n-1}X}\prod_1^{n-1} \tilde X$.

\end{enumerate}
So $I_{i''_{n-1}(\tilde{X})}$, $F^{\pi_{1}(X)}_{n-1}(\tilde{X}\backslash  \pi_{1}(X) Q_1)$, $\widetilde{F_{n-1}(X\backslash \bar Q_1)}$ and $F_{n-1}(X\backslash \bar Q _1)\times_{\prod_1^{n-1}X}\prod_1^{n-1} \tilde X$ are pairwise homotopy equivalent.

\begin{enumerate}[\textbullet]
\item Since $\dim(X)\geq 3$, the map $i_{n}(X):F_{n}(X) \longrightarrow \prod_1^nX$ is $2$-connected by Theorem~\ref{B}, and thus $\pi_{1}(i_{n}(X))$ is an isomorphism. Further, $\prod_1^n p(\tilde X): \prod_1^n \tilde X \longrightarrow \prod_1^nX$ is the universal covering, and so it follows from Lemma~\ref{L} that $\widetilde{F_{n}(X)} \simeq F_n(X)\times_{\prod_1^nX}\prod_1^n\tilde X$.


\item The pull-back~(\ref{pullbackFGnX}) gives rise to a homotopy equivalence $F_n(X)\times_{\prod_1^nX}\prod_1^n \tilde X\simeq F_n^{\pi_1(X)}(\tilde{X})$.

\item Consider the (strict) pull-back:
$$\xymatrix{F_n(X)\times_{\prod_1^nX}\prod_1^n\tilde X\ar[d]_{p'(X)}\ar[rr]^-{\tilde{i}_n(X)}&& \prod_1^n \tilde X\ar[d]^{\prod_1^n p(\tilde X)}\\
F_n(X)\ar@{^{(}->}[rr]^-{i_n(X)}&& \prod_1^nX.}$$
Since the projection map $p'(X) : F_n(X)\times_{\prod_1^nX}\prod_1^n\tilde X\longrightarrow F_n(X)$ is $1$-connected,
the map $\mathcal{I}(p'(X), \prod_1^np(\tilde{X})):  I_{\tilde{i}_n(X)}\longrightarrow I_{i_{n}(X)}$ is a weak homotopy equivalence by Corollary~\ref{LL1} and~(\ref{E-11}).
On the other hand, the map $\tilde{i}_n(X)$ is null homotopic because the space $\prod_1^n \tilde X$ is contractible, and it follows from~(\ref{E1}) that $I_{\tilde{i}_n(X)} \simeq F_n(X)\times_{\prod_1^nX}\prod_1^n\tilde X$.
\end{enumerate}
Thus $I_{\tilde{i}_n(X)}$, $F_n(X)\times_{\prod_1^nX}\prod_1^n\tilde X$, $F_n(X)\times_{\prod_1^nX}\prod_1^n \tilde X$ and $F_n^{\pi_1(X)}(\tilde{X})$ are pairwise homotopy equivalent, and are weakly homotopy equivalent to $I_{i_{n}(X)}$. Finally,~(\ref{E'-11}) implies that $I_{i_{n-1}'(X)}\simeq I_{i_{n}(X)}$, and by Corollary~\ref{LL1} and~(\ref{E-11}), there is a weak homotopy equivalence $I_{i''_{n-1}(\tilde{X})}\longrightarrow I_{i'_{n-1}(X)}$, and the result follows.\end{proof}

The family of manifolds covered by Euclidean spaces is properly contained in the family of manifolds whose universal covering is
contractible. The fact that the inclusion is strict may be seen by considering for example the Whitehead open $3$-manifold that is
contractible but not homeomorphic to $\mathbb{R}^3$.


\begin{RemX}
By \cite[Theorem~1.6, Chapter~V, p.\ 229]{Br} and~\cite{Zb},  the universal covering of any paracompact aspherical manifold
is contractible.
\end{RemX}



\setcounter{section}{3}
\setcounter{equation}{0}
\setcounter{thmX}{0}

\noindent
{\large\bf 3.\ The homotopy fibre of the inclusion map $F_n(\mathbb{S}^k/G)\lhra\prod_1^n\mathbb{S}^k/G$ and the associated long exact sequence in homotopy}

\medskip

In this section, we start by studying the homotopy fibre of the inclusion  $i_n(\mathbb{S}^k/G) : F_n(\mathbb{S}^k/G)\lhra\prod_1^n\mathbb{S}^k/G$, where $\mathbb{S}^k/G$ is the orbit manifold associated with a free tame action of a Lie group $G$ on the $k$-sphere $\mathbb{S}^k$.
One of our aims is to describe completely the long exact sequence in homotopy of the homotopy fibration
$$I_{\iota_{n}(\mathbb{S}^k/G)}\longrightarrow F_n(\mathbb{S}^k/G) \longrightarrow \prod_1^n\mathbb{S}^k/G$$
in the case that $G$ is discrete (so finite).
Let $k\geq 3$, and consider a free tame action $G\times\mathbb{S}^k\longrightarrow \mathbb{S}^k$ of a compact Lie group $G$ on $\mathbb{S}^k$. If $\dim(G)=0$ then $G$ is finite, and comparing Euler characteristics, we obtain $\chi(\mathbb{S}^k)=\left\lvert G \right\rvert \ldotp \chi(\mathbb{S}^k/G)$, so $G$ is trivial or $\Z_{2}$ if $k$ is even, and $\left\lvert G \right\rvert\ge 2$ if $k$ is odd. In particular, for $G=\Z_2$ we get $\S^k/\Z_2=\R P^k$, the $k$-dimensional real projective space. If $\dim(G)>0$ then we have the following result.

\begin{thmX}[{\cite[8.5.~Theorem]{Br}}]\label{BB}
Suppose that $G$ is a compact Lie group such that $\dim(G)>0$ and that acts freely on $\mathbb{S}^k$. Then $G$ is isomorphic to a subgroup of $\mathbb{S}^3$. More precisely, up to isomorphism, $G$ is one of the following groups: $\mathbb{S}^1$; $N(\mathbb{S}^1)$, the normaliser of $\mathbb{S}^1$ in the group $\mathbb{S}^3$; or $\mathbb{S}^3$.
\end{thmX}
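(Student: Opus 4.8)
The plan is to determine first the identity component $G_{0}$ of $G$, and then the finite component group $G/G_{0}$, using at every stage the classical input (from P.~A.~Smith theory, equivalently from the periodicity of the cohomology of a group acting freely on a sphere) that a finite group acting freely on $\mathbb{S}^{k}$ cannot contain a subgroup isomorphic to $\mathbb{Z}/p\times\mathbb{Z}/p$ for any prime $p$. Since every subgroup of $G$ inherits a free action on $\mathbb{S}^{k}$, this constraint applies to all finite subgroups of $G$. First I would look at a maximal torus $T$ of $G_{0}$: as $\dim(G)>0$, $T$ is a nontrivial torus acting freely on $\mathbb{S}^{k}$, and a torus of rank $\geq 2$ contains $\mathbb{Z}/p\times\mathbb{Z}/p$, so $\operatorname{rank}(G)=1$ and $T\cong\mathbb{S}^{1}$. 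The classification of rank-one compact connected Lie groups then leaves only $G_{0}\cong\mathbb{S}^{1}$, $G_{0}\cong SO(3)$ or $G_{0}\cong\mathbb{S}^{3}$; since $SO(3)$ contains the diagonal Klein four-group, it cannot act freely, so $G_{0}\cong\mathbb{S}^{1}$ or $G_{0}\cong\mathbb{S}^{3}$.

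Next I would isolate an elementary sublemma: if $A$ is a compact Lie group whose identity component is a circle $\mathbb{S}^{1}$ lying in the centre of $A$, and $A\neq\mathbb{S}^{1}$, then $A$ contains $\mathbb{Z}/p\times\mathbb{Z}/p$ for some prime $p$. To prove it, lift an element of prime order $p$ of the finite group $A/\mathbb{S}^{1}$ to an element $a\in A$; using the divisibility of $\mathbb{S}^{1}$, choose $\eta\in\mathbb{S}^{1}$ with $\eta^{p}=a^{p}$, so that, $\mathbb{S}^{1}$ being central, $a\eta^{-1}$ has order exactly $p$ and lies outside $\mathbb{S}^{1}$; it then generates, together with the $p$-torsion subgroup of $\mathbb{S}^{1}$, a copy of $\mathbb{Z}/p\times\mathbb{Z}/p$.

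With these tools I would analyse the extension $1\to G_{0}\to G\to G/G_{0}\to 1$ through the centraliser $C=C_{G}(G_{0})$, which is the kernel of the conjugation homomorphism $G\to\operatorname{Aut}(G_{0})$. If $G_{0}\cong\mathbb{S}^{3}$, every automorphism of $\mathbb{S}^{3}$ is inner, so $G=G_{0}\cdot C$; a rank argument (using $\operatorname{rank}(G)=1$) shows that $C_{0}$ is trivial, hence $C$ is finite, and then the sublemma applied to $\langle c,\mathbb{S}^{1}\rangle$, with $\mathbb{S}^{1}$ a maximal torus of $G_{0}$ and $c\in C\setminus G_{0}$, would give a contradiction; thus $C\subseteq Z(G_{0})=\{\pm1\}$ and $G=G_{0}\cdot C=\mathbb{S}^{3}$. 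If $G_{0}\cong\mathbb{S}^{1}$, the same rank argument gives $C_{0}=\mathbb{S}^{1}$, and since $\mathbb{S}^{1}$ is then central in $C$, the sublemma applied to $C$ forces $C=\mathbb{S}^{1}=G_{0}$, whence $G/G_{0}\hookrightarrow\operatorname{Aut}(\mathbb{S}^{1})\cong\mathbb{Z}/2$ and $[G:G_{0}]\leq 2$.

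In the remaining case, $G$ is an extension of $\mathbb{Z}/2$ by $\mathbb{S}^{1}$ on which $\mathbb{Z}/2$ acts by inversion; since $H^{2}(\mathbb{Z}/2;\mathbb{S}^{1})\cong\mathbb{Z}/2$ for that action, there are exactly two such extensions: the split extension $O(2)=\mathbb{S}^{1}\rtimes\mathbb{Z}/2$, which is excluded because it contains $\langle -1,r\rangle\cong\mathbb{Z}/2\times\mathbb{Z}/2$ for any reflection $r$, and the non-split extension $N_{\mathbb{S}^{3}}(\mathbb{S}^{1})=\mathbb{S}^{1}\cup\mathbb{S}^{1}j$, whose non-identity coset consists of elements of order $4$, which is the surviving possibility. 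Finally, each of $\mathbb{S}^{1}$, $N(\mathbb{S}^{1})$ and $\mathbb{S}^{3}$ does occur, being a subgroup of $\mathbb{S}^{3}$, which acts freely on $\mathbb{S}^{3}$ by quaternionic multiplication. I expect the main obstacle to be the group-theoretic bookkeeping of this last step — the two rank arguments bounding $C_{0}$ and the enumeration of the circle extensions — but none of it exceeds elementary compact Lie group theory together with the Smith-theoretic exclusion of $\mathbb{Z}/p\times\mathbb{Z}/p$.
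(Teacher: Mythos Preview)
Your argument is essentially correct: the Smith-theoretic exclusion of $\mathbb{Z}/p\times\mathbb{Z}/p$ forces $\operatorname{rank}(G)=1$, hence $G_{0}\in\{\mathbb{S}^{1},SO(3),\mathbb{S}^{3}\}$, and the Klein four-group in $SO(3)$ eliminates that case; your sublemma and the subsequent centraliser analysis then pin down $G$ itself, with the $H^{2}(\mathbb{Z}/2;\mathbb{S}^{1})$ computation distinguishing $O(2)$ from $N_{\mathbb{S}^{3}}(\mathbb{S}^{1})$. One small point: in the case $G_{0}\cong\mathbb{S}^{1}$ you do not actually need a ``rank argument'' to conclude $C_{0}=\mathbb{S}^{1}$, since $C_{0}$ is connected and hence contained in $G_{0}=\mathbb{S}^{1}$, while conversely $G_{0}\subseteq C$ because $\mathbb{S}^{1}$ is abelian; the equality $C_{0}=G_{0}$ is immediate.

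As for comparison with the paper: there is nothing to compare. The paper does not prove this theorem at all; it is stated as a citation of Bredon~\cite[8.5~Theorem]{Br} and used as a black box. Your write-up therefore supplies strictly more than the paper does. If you want to align with the paper's presentation, a single sentence pointing to Bredon suffices; if you want a self-contained account, your outline is a sound one and is in fact close in spirit to Bredon's own argument.
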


Using Theorem~\ref{BB}, we obtain the following proposition.

\begin{proX}
Let $G$ be a compact Lie group $G$, and let $G\times \mathbb{S}^k\longrightarrow \mathbb{S}^k$ be a free action of $G$ on the sphere $\mathbb{S}^k$.
\begin{enumerate}[(1)]
\item\label{it:faSk} If $G$ is finite with $\left\lvert G \right\rvert>2$ or $G = \mathbb{S}^1$ then $k$ is odd.
\item If $G = N(\mathbb{S}^1)$ or $G = \mathbb{S}^3$ then $k=4m+3$ for some $m\ge 0$.

\end{enumerate}
\end{proX}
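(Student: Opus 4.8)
The plan is to prove the two statements separately, reducing each to a standard fact about free actions on spheres: part~(\ref{it:faSk}) will come from an Euler-characteristic count, and part~(2) from the observation that both $N(\mathbb{S}^1)$ and $\mathbb{S}^3$ contain a subgroup isomorphic to the quaternion group $Q_8$, whose integral cohomology is periodic of period $4$.

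For~(\ref{it:faSk}), suppose first that $G$ is finite with $|G|>2$. Since the action is free and $G$ is finite, the quotient map $\mathbb{S}^k\longrightarrow\mathbb{S}^k/G$ is a $|G|$-fold covering of closed manifolds, so $\chi(\mathbb{S}^k)=|G|\cdot\chi(\mathbb{S}^k/G)$, exactly as in the paragraph preceding Theorem~\ref{BB}. If $k$ were even this would force $2=|G|\cdot\chi(\mathbb{S}^k/G)$, hence $|G|\leq 2$, a contradiction; so $k$ is odd. If instead $G=\mathbb{S}^1$, the freeness of the action of the compact Lie group $\mathbb{S}^1$ makes $\mathbb{S}^k\longrightarrow\mathbb{S}^k/\mathbb{S}^1$ a fibre bundle with fibre $\mathbb{S}^1$, and since the Euler characteristic is multiplicative over such bundles we obtain $\chi(\mathbb{S}^k)=\chi(\mathbb{S}^1)\cdot\chi(\mathbb{S}^k/\mathbb{S}^1)=0$; as $\chi(\mathbb{S}^k)=2$ whenever $k$ is even, $k$ must again be odd.

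For~(2), identify $\mathbb{S}^3$ with the group of unit quaternions, so that $Q_8=\{\pm 1,\pm i,\pm j,\pm k\}$ is a subgroup of $\mathbb{S}^3$; and, taking $\mathbb{S}^1=\{e^{i\theta}\}$, note that $N(\mathbb{S}^1)=\mathbb{S}^1\cup\mathbb{S}^1 j$ also contains a copy of $Q_8$, namely $\langle i,j\rangle$ (every element of $\mathbb{S}^1 j$ squares to $-1$, and $jij^{-1}=-i$). By Theorem~\ref{BB}, the group $G$ in the statement is $N(\mathbb{S}^1)$ or $\mathbb{S}^3$, so it has a subgroup $K\cong Q_8$, and restricting the given free action to $K$ yields a free action of $Q_8$ on $\mathbb{S}^k$. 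Moreover $k$ is odd: in both cases $\mathbb{S}^1\subseteq G$, and the restricted free $\mathbb{S}^1$-action gives a fibration as in~(\ref{it:faSk}), forcing $\chi(\mathbb{S}^k)=0$. By the Lefschetz fixed point theorem, every homeomorphism appearing in a free action of a finite group on $\mathbb{S}^k$ with $k$ odd is orientation-preserving, so the cellular chain complex of a $Q_8$-CW structure on $\mathbb{S}^k$ splices into a periodic free $\mathbb{Z}Q_8$-resolution of the trivial module $\mathbb{Z}$ of period $k+1$, whence the integral cohomology of $Q_8$ has period dividing $k+1$. Since a nontrivial finite group has cohomological period $2$ exactly when it is cyclic, and $Q_8$ is non-cyclic yet acts freely on $\mathbb{S}^3$ by left translation, the period of $Q_8$ equals $4$. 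Therefore $4\mid(k+1)$, i.e.\ $k=4m+3$ for some $m\geq 0$.

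The Euler-characteristic bookkeeping and the embedding of $Q_8$ are routine; the step needing the most care — and the one a referee would most likely scrutinise — is the implication ``free $Q_8$-action on $\mathbb{S}^k$'' $\Rightarrow$ ``$4\mid k+1$'', i.e.\ the periodic-cohomology input, which should either be cited or justified via the resolution argument indicated above. If one prefers to avoid group cohomology when $G=\mathbb{S}^3$, one may argue instead with the Borel fibration $\mathbb{S}^k\longrightarrow\mathbb{S}^k/\mathbb{S}^3\longrightarrow B\mathbb{S}^3=\mathbb{HP}^\infty$: if $k+1$ were not divisible by $4$, then all differentials in its Serre spectral sequence would vanish, forcing $H^*(\mathbb{S}^k/\mathbb{S}^3;\mathbb{Z})$ to be non-zero in arbitrarily high degrees, which is impossible since $\mathbb{S}^k/\mathbb{S}^3$ is a closed manifold of dimension $k-3$; the case $G=N(\mathbb{S}^1)$ would still require the $Q_8$ argument.
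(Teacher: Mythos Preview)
Your proof is correct. For part~(1) you and the paper both argue via Euler characteristic; the only difference is that for $G=\mathbb{S}^1$ you use multiplicativity in the bundle $\mathbb{S}^1\to\mathbb{S}^k\to\mathbb{S}^k/\mathbb{S}^1$ directly, whereas the paper restricts the action to a finite cyclic subgroup $\mathbb{Z}_l\subset\mathbb{S}^1$ with $l\geq 3$ and re-uses the finite-group count.

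For part~(2) the routes genuinely diverge. You restrict to the subgroup $Q_8\subset N(\mathbb{S}^1)\subset\mathbb{S}^3$ and invoke the classical periodic-cohomology fact that a free $Q_8$-action on $\mathbb{S}^k$ forces $4\mid k+1$. The paper instead stays entirely within Euler-characteristic bookkeeping: it passes to the free $\mathbb{S}^1$-action (so $k=2m'+1$), forms the induced free action of $\mathbb{Z}_2=N(\mathbb{S}^1)/\mathbb{S}^1$ on the $2m'$-manifold $\mathbb{S}^{2m'+1}/\mathbb{S}^1$, and then applies Floyd's relation $\chi(X)+\chi(X^{\mathbb{Z}_2})=2\tilde{\chi}(X/\mathbb{Z}_2)$ from~\cite{Fl} to conclude that $m'$ is odd. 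Your argument is the more standard one in the space-form literature and handles $N(\mathbb{S}^1)$ and $\mathbb{S}^3$ uniformly via a single finite subgroup; the paper's argument avoids any group cohomology but tacitly needs $\chi(\mathbb{S}^{2m'+1}/\mathbb{S}^1)=m'+1$ (a Gysin-sequence computation) together with the cited reference to Floyd. Your alternative Serre-spectral-sequence argument for $G=\mathbb{S}^3$ is also sound and is closer in spirit to the paper's approach, being an Euler-characteristic-style obstruction rather than a group-cohomology one.
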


\begin{proof}\mbox{}
\begin{enumerate}[(1)]
\item First, suppose that $G$ is finite with $\left\lvert G \right\rvert>2$. Given a free action $G\times\mathbb{S}^k\longrightarrow \mathbb{S}^k$,
we see that $\chi(\mathbb{S}^k)=\left\lvert G \right\rvert\ldotp\chi(\mathbb{S}^k/G)$, so $k$ is odd because $\left\lvert G \right\rvert>2$. Now assume that $G = \mathbb{S}^1$. A free action $\mathbb{S}^1\times\mathbb{S}^k\longrightarrow \mathbb{S}^k$ of $\mathbb{S}^1$ on $\mathbb{S}^k$ induces a free action of the cyclic group $\mathbb{Z}_l$ on $\mathbb{S}^k$ for all $l\geq 1$. Comparing Euler characteristics once more, we obtain $\chi(\mathbb{S}^k)=l \ldotp \chi(\mathbb{S}^k/\mathbb{Z}_l)$, and taking $l\ge 3$, we deduce once more that $k$ is odd.

\item Suppose that $G = N(\mathbb{S}^1)$ or $\mathbb{S}^3$. A free action $\mathbb{S}^3\times\mathbb{S}^k\longrightarrow \mathbb{S}^k$ of $\mathbb{S}^3$ on $\mathbb{S}^k$ induces a free action $N(\mathbb{S}^1)\times\mathbb{S}^k\longrightarrow \mathbb{S}^k$ of $N(\mathbb{S}^1)$ on $\mathbb{S}^k$, and a free action $N(\mathbb{S}^1)\times\mathbb{S}^k\longrightarrow \mathbb{S}^k$ of $G$ on $\mathbb{S}^k$ induces a free action $\mathbb{S}^1\times\mathbb{S}^k\longrightarrow \mathbb{S}^k$ of $\mathbb{S}^1$ on $\mathbb{S}^k$. By~(\ref{it:faSk}), it follows that $k=2m'+1$ for some $m'\ge 0$, from which we obtain an induced free action $\mathbb{Z}_2\times \mathbb{S}^{2m'+1}/\mathbb{S}^1\longrightarrow \mathbb{S}^{2m'+1}/\mathbb{S}^1$ of $\mathbb{Z}_2$ on $\mathbb{S}^{2m'+1}/\mathbb{S}^1$. Using the Euler characteristic relation $\chi(\mathbb{S}^{2m'+1}/\mathbb{S}^1) + \chi((\mathbb{S}^{2m'+1}/\mathbb{S}^1)^{\mathbb{Z}_2}) = 2\tilde{\chi}((\mathbb{S}^{2m'+1}/\mathbb{S}^1)/\mathbb{Z}_2)$ given in~\cite[p.~146]{Fl} for the reduced Euler characteristic $\tilde{\chi}$, we see that $m'$ is also odd, and so $k=4m+3$ for some $m\ge 0$.\qedhere
\end{enumerate}
\end{proof}

If $k\geq 3$, let $G\times \mathbb{S}^k\longrightarrow\mathbb{S}^k$ be a tame, free action of a compact Lie group $G$ on $\mathbb{S}^k$, and consider the topological manifold without boundary $\mathbb{S}^k/G$ that is the orbit space for this action. If $Q_{1}\in \mathbb{S}^k$, the map $\mathbb{S}^k\backslash GQ_1\lhra\mathbb{S}^k$ factors through the contractible space $\mathbb{S}^k\backslash Q_1$, and so is null homotopic. If $1\le i\le n$, the fact that the following diagram:
$$\xymatrix{F_n^G(\mathbb{S}^k\backslash GQ_1)\ar[d]_{J_i(\mathbb{S}^k)\left\lvert_{F_n^G(\mathbb{S}^k/GQ_{1})}\right.} \ar@{^{(}->}[rr]^-{i''_n(\mathbb{S}^k)}&& \prod_1^n\mathbb{S}^k\ar[d]^{J_i(\mathbb{S}^k)}\\
\mathbb{S}^k\backslash GQ_1\ar@{^{(}->}[rr] && \mathbb{S}^k}$$
is commutative implies that the map $i_n''(\mathbb{S}^k) : F_n^G(\mathbb{S}^k\backslash GQ_1)\lhra\prod_1^n\mathbb{S}^k$ is also null homotopic. We obtain the following corollary by applying parts~(\ref{item:l1}) and~(\ref{it:l2}) of Theorem~\ref{l} to the map $i_n''(\mathbb{S}^k)$.

\begin{CorX} \label{P}
Let $G\times \mathbb{S}^k\longrightarrow \mathbb{S}^k$ be a free, tame action of a compact Lie group $G$ on $\mathbb{S}^k$.
\begin{enumerate}[(1)]
\item There is a homotopy equivalence $I_{i''_{n-1}(\mathbb{S}^k)} \simeq F_{n-1}^G(\mathbb{S}^k\backslash GQ_1)\times \Omega(\prod_1^{n-1}\mathbb{S}^k)$.
If the group $G$ is finite and $\dim(\mathbb{S}^k/G)\geq 3$ then $\widetilde{F_{n-1}(\mathbb{S}^k\backslash Q_1)} \simeq F_{n-1}^G(\mathbb{S}^k\backslash GQ_1)$.

\item\label{it:P2} If $j\leq k-\dim G-2$ then $\pi_j(F_{n-1}^G(\mathbb{S}^k\backslash GQ_1))=0$.
\end{enumerate}
\end{CorX}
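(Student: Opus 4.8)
The plan is to derive both parts by specialising Theorem~\ref{l} to $X=\mathbb{S}^k$, with $G$ the given compact Lie group, and then reading off the numerical consequences. First I would check the hypotheses of Theorem~\ref{l}: the sphere $\mathbb{S}^k$ is a connected topological manifold without boundary, a compact Lie group is in particular a Lie group, and the action is free and tame by assumption. The only substantive point is the standing hypothesis that the inclusion $i''_{n-1}(\mathbb{S}^k): F_{n-1}^{G}(\mathbb{S}^k\backslash GQ_1)\lhra\prod_1^{n-1}\mathbb{S}^k$ be null homotopic, and this has already been verified in the discussion preceding the statement: the inclusion $\mathbb{S}^k\backslash GQ_1\lhra\mathbb{S}^k$ factors through the contractible space $\mathbb{S}^k\backslash Q_1$ and so is null homotopic, and composing with the coordinate projections $p_{J_i}(\mathbb{S}^k):\prod_1^{n-1}\mathbb{S}^k\longrightarrow\mathbb{S}^k$ shows that each coordinate of $i''_{n-1}(\mathbb{S}^k)$, hence $i''_{n-1}(\mathbb{S}^k)$ itself, is null homotopic. (In particular the parenthetical hypothesis that $X\backslash GQ_1\lhra X$ be null homotopic is satisfied.)

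With the hypotheses in place, part~(1) is immediate from Theorem~\ref{l}(\ref{item:l1}): its first assertion gives the homotopy equivalence $\mathcal{I}(\text{id}_{F_{n-1}^G(\mathbb{S}^k\backslash GQ_1)},\text{id}_{\prod_1^{n-1}\mathbb{S}^k}): I_{i''_{n-1}(\mathbb{S}^k)}\xrightarrow{\ \simeq\ }F_{n-1}^G(\mathbb{S}^k\backslash GQ_1)\times\Omega(\prod_1^{n-1}\mathbb{S}^k)$. For the second assertion, when $G$ is finite it is a discrete (hence $0$-dimensional) Lie group, $\mathbb{S}^k$ is $1$-connected since $k\geq 2$, and $\dim(\mathbb{S}^k/G)\geq 3$ is assumed; so the last part of Theorem~\ref{l}(\ref{item:l1}) identifies $\psi_{n-1}(\mathbb{S}^k\backslash GQ_1): F_{n-1}^G(\mathbb{S}^k\backslash GQ_1)\longrightarrow F_{n-1}((\mathbb{S}^k/G)\backslash\bar Q_1)$ as the universal covering, which yields the claimed homotopy equivalence of orbit configuration space with the corresponding universal cover.

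For part~(2) I would invoke Theorem~\ref{l}(\ref{it:l2}) with $X=\mathbb{S}^k$. Since $\pi_i(\mathbb{S}^k)=0$ for $i<k$ we have $\operatorname{conn}(\mathbb{S}^k)=k-1$, and because the action is free and tame the quotient map is a fibration $G\lhra\mathbb{S}^k\longrightarrow\mathbb{S}^k/G$ of manifolds, so $\dim(\mathbb{S}^k/G)=k-\dim(G)$; as $\dim(G)\geq 0$ this forces $\min(\operatorname{conn}(\mathbb{S}^k),\dim(\mathbb{S}^k/G)-2)=k-\dim(G)-2$, and Theorem~\ref{l}(\ref{it:l2}) then gives $\pi_j(F_{n-1}^G(\mathbb{S}^k\backslash GQ_1))=0$ for all $j\leq k-\dim(G)-2$, which is precisely the assertion. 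I do not anticipate any real obstacle here: the corollary is a direct reading of Theorem~\ref{l}, and the only steps requiring attention are the confirmation of the null-homotopy hypothesis (done via the coordinate projections above) and the elementary dimension count $\dim(\mathbb{S}^k/G)=k-\dim(G)$ supplied by the orbit fibration.
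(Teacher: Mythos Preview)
Your proposal is correct and follows essentially the same approach as the paper: the paper simply states that the corollary is obtained by applying parts~(\ref{item:l1}) and~(\ref{it:l2}) of Theorem~\ref{l} to $i''_{n-1}(\mathbb{S}^k)$, the null-homotopy hypothesis having been verified in the paragraph immediately preceding the statement. Your write-up merely makes explicit the details the paper leaves to the reader, namely the computation $\operatorname{conn}(\mathbb{S}^k)=k-1$, $\dim(\mathbb{S}^k/G)=k-\dim G$, and the resulting minimum.
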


Let $k\geq 3$. We now proceed to study the long exact sequence in homotopy of the homotopy fibration
 $$I_{i_n({\mathbb{S}^k/G})}\xrightarrow{p_{i_n({\mathbb{S}^k/G})}} F_n({\mathbb{S}^k/G})\xrightarrow{i_n({\mathbb{S}^k/G})}\prod_1^n\mathbb{S}^k/G.$$
 In this case, there is a homotopy equivalence $I_{i''_{n-1}(\mathbb{S}^k)}\stackrel{\simeq}{\longrightarrow} F_{n-1}^G(\mathbb{S}^k\backslash GQ_1)\times \Omega(\prod_1^{n-1}\mathbb{S}^k)$
 by Theorem~\ref{l}(\ref{item:l1}). We restrict our attention to the case where $k$ is odd and $\dim(G)=0$ \emph{i.e.}, $G$ is finite. Up to the identification
 of the group $\pi_{j}(I_{i_n({\mathbb{S}^k/G})})$ with $\pi_{j-1}(F_{n-1}^G(\mathbb{S}^k\backslash GQ_1)\times \Omega(\prod_1^{n-1}\mathbb{S}^k))$ described in Theorem~\ref{l}(\ref{it:l3}), our aim is to describe the boundary homomorphism:
 $$\hat{\partial}_j: \pi_j\biggl( \prod_1^{n}\mathbb{S}^k \biggr) \longrightarrow \pi_{j-1}\biggl(F_{n-1}^G(\mathbb{S}^k\backslash GQ_1)\times \Omega\biggl(\prod_1^{n-1}\mathbb{S}^k\biggr)\biggr),$$
as well as other homomorphisms of that exact sequence. Since $k\geq 3$, Corollary \ref{P}(\ref{it:P2}) implies that $\pi_1( I_{i_n(\mathbb{S}^k/G)})=0$. So we may assume that $j>2$.
In what follows, we shall need the following result (cf.~\cite[Theorem~3.1]{FH}).

\begin{lemX}\label{fad}\label{D}
Let $k\geq 3$ be odd, and let $G\times \mathbb{S}^k\longrightarrow\mathbb{S}^k$ be a free action of a finite group $G$ on $\mathbb{S}^k$.
Then for all $1\leq i,l\leq n$, there exists a map $\lambda_i(\mathbb{S}^k) : \mathbb{S}^k\longrightarrow F_n^G(\mathbb{S}^k)$ such that the composition $\mathbb{S}^k\xrightarrow{\lambda_i(\mathbb{S}^k)} F_n^G(\mathbb{S}^k)\xrightarrow{\psi_n(\mathbb{S}^k)} F_n(\mathbb{S}^k/G) \xrightarrow{p_{J_l}(\mathbb{S}^k/G)\left\lvert_{F_n(\mathbb{S}^k/G)}\right.} \mathbb{S}^k/G$ is homotopic to the quotient map $q(\mathbb{S}^k) :\mathbb{S}^k\longrightarrow\mathbb{S}^k/G$ for $1\le i\le n$. If $l=i$ then the composition coincides with $q(\mathbb{S}^k)$.
\end{lemX}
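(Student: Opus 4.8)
The plan is to write down $\lambda_i(\mathbb{S}^k)$ explicitly, taking its $i$\textsuperscript{th} coordinate to be $\mathrm{id}_{\mathbb{S}^k}$ and every other coordinate to be a small perturbation of the identity coming from the scalar circle action. Write $k=2m+1$ (using that $k$ is odd) and regard $\mathbb{S}^k$ as the unit sphere of $\mathbb{C}^{m+1}$ with the ambient Euclidean metric $d$, so that $\zeta x\in\mathbb{S}^k$ for all $\zeta\in\mathbb{S}^1\subset\mathbb{C}$ and all $x\in\mathbb{S}^k$. The first step is to record the displacement estimate that makes everything work: since the action of the finite group $G$ on the compact manifold $\mathbb{S}^k$ is free, the number $\delta=\min_{g\in G\setminus\{e\}}\ \min_{y\in\mathbb{S}^k}d(y,gy)$ is strictly positive, because for each fixed $g\neq e$ the continuous function $y\mapsto d(y,gy)$ is everywhere positive on the compact set $\mathbb{S}^k$ and there are only finitely many such $g$.

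Next, I would fix real numbers $\epsilon_1,\dots,\epsilon_n$ with $\epsilon_i=0$, with $\epsilon_j\not\equiv\epsilon_l\pmod 1$ whenever $j\neq l$, and all small enough that $|e^{2\pi i\epsilon_j}x-x|=|e^{2\pi i\epsilon_j}-1|\le 2\pi|\epsilon_j|<\delta/3$ for every $x\in\mathbb{S}^k$ (the bound being independent of $x$). Setting $\zeta_j=e^{2\pi i\epsilon_j}$ and $f_j(x)=\zeta_j x$, I would then define $\lambda_i(\mathbb{S}^k)(x)=(f_1(x),\dots,f_n(x))$, which is clearly continuous and has $i$\textsuperscript{th} coordinate $\mathrm{id}_{\mathbb{S}^k}$. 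The point to check is that $\lambda_i(\mathbb{S}^k)$ lands in $F_n^G(\mathbb{S}^k)$, i.e.\ that $Gf_j(x)\cap Gf_l(x)=\emptyset$ for $j\neq l$. Indeed, if $gf_j(x)=g'f_l(x)$ with $g,g'\in G$, then $f_l(x)=hf_j(x)$ for $h=g'^{-1}g$; if $h=e$ then $\zeta_l x=\zeta_j x$, forcing $e^{2\pi i(\epsilon_l-\epsilon_j)}=1$ (any nonzero coordinate of $x\in\mathbb{C}^{m+1}$ sees this), contradicting $\epsilon_j\not\equiv\epsilon_l$; and if $h\neq e$ then $d(f_l(x),f_j(x))=d(hf_j(x),f_j(x))\ge\delta$, whereas $d(f_l(x),f_j(x))\le d(f_l(x),x)+d(x,f_j(x))<2\delta/3$, again a contradiction.

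Finally I would chase the composition. The map $\psi_n(\mathbb{S}^k)\circ\lambda_i(\mathbb{S}^k)$ sends $x$ to $(\overline{f_1(x)},\dots,\overline{f_n(x)})\in F_n(\mathbb{S}^k/G)$, and applying $p_{J_l}(\mathbb{S}^k/G)\lvert_{F_n(\mathbb{S}^k/G)}$ gives $\overline{f_l(x)}=q(\mathbb{S}^k)(f_l(x))$. When $l=i$ this is $q(\mathbb{S}^k)(x)$ on the nose, since $f_i=\mathrm{id}_{\mathbb{S}^k}$. When $l\neq i$, the map $H\colon\mathbb{S}^k\times I\to\mathbb{S}^k$ defined by $H(x,t)=e^{2\pi it\epsilon_l}x$ is a homotopy from $\mathrm{id}_{\mathbb{S}^k}$ to $f_l$, so $q(\mathbb{S}^k)\circ f_l\sim q(\mathbb{S}^k)$, which is the assertion. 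The only genuinely substantive step, and the one I would present most carefully, is the verification in the previous paragraph that the perturbed points stay in pairwise disjoint $G$-orbits; this is exactly where freeness and compactness of the $G$-action are used, via the uniform constant $\delta$, and it is what allows the argument to proceed without assuming any compatibility between the $G$-action on $\mathbb{S}^k$ and the circle action used to perturb.
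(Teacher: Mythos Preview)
Your proof is correct and follows essentially the same approach as the paper's: both exploit that $k$ is odd to perturb the identity along a flow whose orbits are transverse to $G$-orbits, using freeness and compactness to obtain a uniform lower bound $\delta$ on the displacement $d(y,gy)$ for $g\neq e$. The only difference is cosmetic: the paper invokes an arbitrary non-vanishing vector field on $\mathbb{S}^k$ and moves along its integral curves, whereas you make this explicit by embedding $\mathbb{S}^{2m+1}\subset\mathbb{C}^{m+1}$ and using the scalar circle action $x\mapsto e^{2\pi i\epsilon}x$ (which is precisely the flow of the vector field $v(x)=ix$), making the homotopy $H(x,t)=e^{2\pi it\epsilon_l}x$ and the displacement estimates fully concrete.
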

\begin{proof}
To construct the continuous map $\lambda_i(\mathbb{S}^k) : \mathbb{S}^k\longrightarrow F_n^G(\mathbb{S}^k)$, consider a non-vanishing vector
field on $\mathbb{S}^k$. Such a vector field exists because $k$ is odd. Since the given action $G\times \mathbb{S}^k\longrightarrow\mathbb{S}^k$
is free, there exists $\epsilon>0$ such that the distance between any two distinct points belonging to the same $G$-orbit is greater than $\epsilon$.
Given $x\in \mathbb{S}^k$, consider the $n$-tuple $(x, x_2,\ldots, x_n)$, where for $m=2,\ldots,n$, $x_{m}$  is the point on the integral curve of
the vector field passing through $x$ and corresponding to the parameter value $t=(m-1)/R$ for $R>0$ sufficiently large. Thus the distance between any
two points of this $n$-tuple is less than $\epsilon$, and so the points $x, x_2,\ldots, x_{n-1}$ and $x_n$ belong to distinct orbits. Given
$1\leq i\leq n$, set $\lambda_i(x)=(x_2,\ldots, x_{i},\underbrace{x}_{\text{$i$\textsuperscript{th} position}}, x_{i+1},\ldots x_n)$. By construction,
$\lambda_i(x)$ belongs to the orbit configuration space $F_n^G(\mathbb{S}^k)$ and $\lambda_i$ is continuous.
Further,
\begin{equation*}
p_{J_l}(\mathbb{S}^k/G)\left\lvert_{F_n(\mathbb{S}^k/G)}\right. \circ \psi_n(\mathbb{S}^k) \circ \lambda_i(x)= \begin{cases}
q(x_{i+1}) & \text{if $l<i$},\\
q(x) & \text{if $l=i$},\\
q(x_{i}) & \text{if $l>i$.}
\end{cases}
\end{equation*}
In each case, one may use the integral curve of the vector field to show that the composition $p_{J_l}(\mathbb{S}^k/G)\left\lvert_{F_n(\mathbb{S}^k/G)}\right. \circ \psi_n(\mathbb{S}^k) \circ \lambda_i: \mathbb{S}^k \longrightarrow\mathbb{S}^k/G$ is homotopic to $q(\mathbb{S}^k)$ as required.
\end{proof}

If the group $G$ is trivial, Lemma~\ref{fad} implies that the fibration $F_n(\mathbb{S}^k) \longrightarrow\mathbb{S}^k$ admits a section
(cf.~\cite[Theorem 3.1]{FH}). The lemma also yields some information about the homomorphism
$\pi_j(i_n(\mathbb{S}^k/G)) :\pi_j(F_n(\mathbb{S}^k)) \longrightarrow \pi_j(\prod_1^n\mathbb{S}^k/G)$ induced by the inclusion map
$i_n(\mathbb{S}^k/G): F_n(\mathbb{S}^k/G) \lhra \prod_1^n \mathbb{S}^k/G$, and which is relevant to understanding the long exact homotopy
sequence in homotopy of the homotopy fibration  $I_{i_n(\mathbb{S}^k/G)}\longrightarrow F_n(\mathbb{S}^k/G) \lhra \prod_1^n \mathbb{S}^k/G$.
In order to study the homomorphism $\pi_{j}(i_n(\mathbb{S}^k/G)):\pi_j(F_n(\mathbb{S}^k/G)) \longrightarrow \pi_j(\prod_1^n\mathbb{S}^k/G)$,
consider a basepoint $(x_1,\ldots,x_n)\in F_n(\mathbb{S}^k/G)$, where we assume that the set $\{x_1,\ldots,x_n\}$ is contained in a small disc
whose centre is a point $x_0\in \mathbb{S}^k/G$. If $1\le i\le n$, choosing a path from $x_i$ to $x_0$ inside this disc allows us to identify
$\pi_j(\mathbb{S}^k/G, x_i)$ with $\pi_j(\mathbb{S}^k/G,x_0)$, and in this way, the group $\pi_j(\prod_1^{n}{\mathbb{S}^k/G},(x_1,\ldots,x_n))$
may be identified isomorphically with the group $\prod_1^{n}\pi_j({\mathbb{S}^k/G}, x_0)$. In what follows, we shall consider the composition of
the homomorphism $\pi_{j}(i_n(\mathbb{S}^k/G)): \pi_j(F_n(\mathbb{S}^k/G)) \longrightarrow \pi_j(\prod_1^n\mathbb{S}^k/G, (x_1,\ldots,x_n))$ by
this isomorphism, which by abuse of notation, we shall also denote by $\pi_j(i_n(\mathbb{S}^k/G)): \pi_j(F_n(\mathbb{S}^k/G))
\longrightarrow \pi_j(\prod_1^{n}{\mathbb{S}^k/G})$.

\begin{lemX}\label{Lem}
Let $k\geq 3$ be odd, and let $G\times \mathbb{S}^k\longrightarrow\mathbb{S}^k$ be a free action of a finite group $G$ on $\mathbb{S}^k$. Then $\pi_0(F_n((\mathbb{S}^k/G)\backslash \bar Q_1))=0$, $\pi_1(F_n((\mathbb{S}^k/G)\backslash \bar Q_1)=\prod_1^nG$, and the homomorphism  $\pi_j(i_n'(\mathbb{S}^k/G)) : \pi_j(F_n((\mathbb{S}^k/G)\backslash \bar Q_1))\longrightarrow\pi_j(\prod_1^n\mathbb{S}^k/G)$ induced by the inclusion map $i_n'(\mathbb{S}^k/G) : F_n((\mathbb{S}^k/G)\backslash \bar Q_1)\lhra\prod_1^n\mathbb{S}^k/G$ is trivial for all $j\geq 2$.
\end{lemX}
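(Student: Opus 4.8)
The plan is to derive the three assertions from Corollary~\ref{cc}, from the pull-back diagram~(\ref{E}), from Theorem~\ref{l}(\ref{item:l1}), and from the fact, recorded just before Corollary~\ref{P}, that the inclusion $i''_n(\mathbb{S}^k): F_n^G(\mathbb{S}^k\backslash GQ_1)\lhra\prod_1^n\mathbb{S}^k$ is null homotopic (which in turn holds because each coordinate of this inclusion factors through the inclusion $\mathbb{S}^k\backslash GQ_1\lhra\mathbb{S}^k$, and $\mathbb{S}^k\backslash GQ_1$ sits inside the contractible open set $\mathbb{S}^k\backslash Q_1\approx\mathbb{R}^k$).

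For the statements about $\pi_0$ and $\pi_1$, I would observe that $Y:=(\mathbb{S}^k/G)\backslash\bar Q_1$ is a connected topological manifold without boundary of dimension $k\geq 3$, and apply Corollary~\ref{cc} to the manifold $\mathbb{S}^k/G$ and the single point $\bar Q_1$: this shows that the inclusion $i'_n(\mathbb{S}^k/G): F_n(Y)\lhra\prod_1^n\mathbb{S}^k/G$ is $(k-1)$-connected. Since $k-1\geq 2$, the induced homomorphisms on $\pi_0$ and on $\pi_1$ are isomorphisms. As $\prod_1^n\mathbb{S}^k/G$ is path connected, this yields $\pi_0(F_n(Y))=0$; and since $G$ is finite and acts freely on the $1$-connected manifold $\mathbb{S}^k$, the quotient map $q(\mathbb{S}^k): \mathbb{S}^k\longrightarrow\mathbb{S}^k/G$ is the universal covering, so $\pi_1(\mathbb{S}^k/G)\cong G$ and hence $\pi_1(F_n(Y))\cong\pi_1(\prod_1^n\mathbb{S}^k/G)=\prod_1^nG$.

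For the last assertion, I would fix $j\geq 2$ and use the commutative (indeed pull-back) diagram~(\ref{E}) with $X=\mathbb{S}^k$ and $r=1$, whose vertical maps are $\psi_n(\mathbb{S}^k\backslash GQ_1): F_n^G(\mathbb{S}^k\backslash GQ_1)\longrightarrow F_n(Y)$ and $\prod_1^n q(\mathbb{S}^k):\prod_1^n\mathbb{S}^k\longrightarrow\prod_1^n\mathbb{S}^k/G$, and whose horizontal maps are $i''_n(\mathbb{S}^k)$ and $i'_n(\mathbb{S}^k/G)$. The map $\prod_1^n q(\mathbb{S}^k)$ is a finite covering, so $\pi_j(\prod_1^n q(\mathbb{S}^k))$ is an isomorphism. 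Since $i''_n(\mathbb{S}^k)$ is null homotopic, $G$ is discrete, $\mathbb{S}^k$ is $1$-connected and $\dim(\mathbb{S}^k/G)=k\geq 3$, Theorem~\ref{l}(\ref{item:l1}) applies and shows that $\psi_n(\mathbb{S}^k\backslash GQ_1)$ is the universal covering of $F_n(Y)$; in particular $\pi_j(\psi_n(\mathbb{S}^k\backslash GQ_1))$ is also an isomorphism. Finally, $\pi_j(i''_n(\mathbb{S}^k))=0$ because $i''_n(\mathbb{S}^k)$ is null homotopic, so commutativity of~(\ref{E}) gives $\pi_j(i'_n(\mathbb{S}^k/G))\circ\pi_j(\psi_n(\mathbb{S}^k\backslash GQ_1))=\pi_j(\prod_1^n q(\mathbb{S}^k))\circ\pi_j(i''_n(\mathbb{S}^k))=0$, and since $\pi_j(\psi_n(\mathbb{S}^k\backslash GQ_1))$ is an isomorphism I conclude that $\pi_j(i'_n(\mathbb{S}^k/G))=0$.

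I do not expect a genuine obstacle: the argument is essentially bookkeeping, and the only points requiring attention are verifying that the hypotheses of Theorem~\ref{l}(\ref{item:l1}) hold (discreteness of $G$, $1$-connectedness of $\mathbb{S}^k$, $\dim(\mathbb{S}^k/G)=k\geq 3$, and null homotopy of $i''_n(\mathbb{S}^k)$) and observing that the vertical maps in~(\ref{E}) are covering maps, hence isomorphisms on $\pi_j$ for $j\geq 2$. The single conceptual ingredient is that the universal cover $F_n^G(\mathbb{S}^k\backslash GQ_1)$ of $F_n(Y)$ maps null-homotopically into $\prod_1^n\mathbb{S}^k$ because $\mathbb{S}^k\backslash GQ_1$ lies inside a contractible open set; oddness of $k$ plays no role in this lemma and is only inherited from the surrounding discussion.
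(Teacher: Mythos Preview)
Your argument is correct. For $\pi_0$ and $\pi_1$ it is identical to the paper's. For $j\geq 2$ your route differs slightly from the paper's: you invoke Theorem~\ref{l}(\ref{item:l1}) to conclude directly that $\psi_n(\mathbb{S}^k\backslash GQ_1)$ is the universal covering of $F_n((\mathbb{S}^k/G)\backslash\bar Q_1)$ for the given $n$, and then read off $\pi_j(i'_n(\mathbb{S}^k/G))=0$ from the commutativity of~(\ref{E}). The paper instead specialises~(\ref{E}) to the case $r=n=1$, uses Lemma~\ref{L} and Corollary~\ref{cc} directly (rather than the packaged statement of Theorem~\ref{l}) to show that $\psi_1(\mathbb{S}^k\backslash GQ_1)$ is a universal covering and hence that $\pi_j(i'_1(\mathbb{S}^k/G))=0$, and then obtains the general case by observing that each coordinate projection $p_{J_i}(\mathbb{S}^k/G)\circ i'_n(\mathbb{S}^k/G)$ factors through $i'_1(\mathbb{S}^k/G)$. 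Your approach is a little shorter but leans on Theorem~\ref{l}; the paper's is more self-contained and makes the underlying mechanism (factorisation through a single coordinate) more explicit. Your closing remark that oddness of $k$ is not used is also correct: the paper's proof does not use it either.
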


\begin{proof} By Corollary \ref{cc}, the homomorphism $\pi_j(i_n'(\mathbb{S}^k/G)) : \pi_j(F_n((\mathbb{S}^k/G)\backslash \bar Q_1))\longrightarrow\pi_j(\prod_1^n\mathbb{S}^k/G)$ is an isomorphism for $j=0,1$,
whence $\pi_0(F_n((\mathbb{S}^k/G)\backslash \bar Q_1))=0$ and $\pi_1(F_n((\mathbb{S}^k/G)\backslash \bar Q_1))=\prod_1^nG$. So assume that $j\geq 2$. Taking $X=\mathbb{S}^k$ and $r=n=1$ in~(\ref{E}), we obtain the following pull-back diagram:
\begin{equation}\label{eq:psi1}
\begin{gathered}
\xymatrix{\mathbb{S}^k\backslash GQ_1\ar[d]_{\psi_1(\mathbb{S}^k \backslash GQ_1)}\ar@{^{(}->}[rr]^-{i''_{1}(\mathbb{S}^k)}&& \mathbb{S}^k\ar[d]^{q(\mathbb{S}^k)}\\
(\mathbb{S}^k/G)\backslash\bar{Q}_1\ar@{^{(}->}[rr]^-{i'_1( \mathbb{S}^k/G)}&& \mathbb{S}^k/G.}
\end{gathered}
\end{equation}
Now $\pi_1(i_1'(\mathbb{S}^k/G))$ is an isomorphism, and so $\psi_1(\mathbb{S}^k \backslash GQ_1): \mathbb{S}^k\backslash GQ_1 \longrightarrow (\mathbb{S}^k/G)\backslash\bar{Q}_1$ is the universal covering by Lemma~\ref{L}. In particular, $\pi_{j}(\psi_1(\mathbb{S}^k \backslash GQ_1))$ is an isomorphism. Further, as we saw just before Corollary~\ref{P}, the map $i''_{1}(\mathbb{S}^k):\mathbb{S}^k\backslash GQ_1\lhra\mathbb{S}^k$ is null homotopic. We conclude from~(\ref{eq:psi1}) that the homomorphism $\pi_{j}(i_1'(\mathbb{S}^k/G)): \pi_j((\mathbb{S}^k/G)\backslash \bar Q_1)\longrightarrow \pi_j(\mathbb{S}^k/G)$ is trivial. But if $1\leq i\leq n$, the composition of the map $i_n'(\mathbb{S}^k/G) : F_n((\mathbb{S}^k/G)\backslash \bar Q_1)\lhra\prod_1^n\mathbb{S}^k/G$ with the projection $p_{J_{i}(\mathbb{S}^k/G)}:\prod_1^n\mathbb{S}^k/G\longrightarrow \mathbb{S}^k/G$ factors through the map $i'_1( \mathbb{S}^k/G): (\mathbb{S}^k/G)\backslash \bar Q_1\lhra \mathbb{S}^k/G$, and so the homomorphism $\pi_j(i_n'(\mathbb{S}^k/G)) : \pi_j(F_n((\mathbb{S}^k/G)\backslash \bar Q_1))\longrightarrow\pi_j(\prod_1^n\mathbb{S}^k/G)$ is trivial.
\end{proof}

Let $\Delta_j^n(\mathbb{S}^k/G)$ denote the diagonal subgroup of $\prod_1^n\pi_j(\mathbb{S}^k/G)$. We end this paper with the following proposition.

\begin{proX}\label{diag}
Let $k\geq 3$ be odd, let $j\geq 2$, and let $G\times \mathbb{S}^k\longrightarrow\mathbb{S}^k$ be a free action of a finite group $G$ on $\mathbb{S}^k$.
Then:
\begin{enumerate}[(1)]
\item\label{it:diag1} the image of the homomorphism $\pi_j(i_n({\mathbb{S}^k/G})):\pi_j(F_n(\mathbb{S}^k/G))
\longrightarrow \prod_1^n\pi_j(\mathbb{S}^k/G)$ is the diagonal subgroup $\Delta_j^n(\mathbb{S}^k/G)$.

\item\label{it:diag2} there are split short exact sequences:
\begin{gather}
0\longrightarrow \Delta_j^n(\mathbb{S}^k/G) \longrightarrow \pi_j\biggl(\prod_1^n\mathbb{S}^k/G\biggr)\longrightarrow \pi_{j-1}\biggl(\prod_1^{n-1}\Omega(\mathbb{S}^k/G)\biggr)\longrightarrow 0 \quad\text{and}\label{eq:ses1}\\
0\longrightarrow  \pi_j(F_{n-1}^G(\mathbb{S}^k\backslash GQ_1)) \longrightarrow  \pi_j(F_n(\mathbb{S}^k/G)) \longrightarrow \Delta_j^n(\mathbb{S}^k/G) \longrightarrow 0.\label{eq:ses2}
\end{gather}
In particular, if $G$ is the trivial group then there is a split short exact sequence:
$$0\longrightarrow  \pi_j(F_{n-1}(\mathbb{R}^k)) \longrightarrow  \pi_j(F_n(\mathbb{S}^k)) \longrightarrow \Delta_j^n(\mathbb{S}^k) \longrightarrow 0.$$
\end{enumerate}
\end{proX}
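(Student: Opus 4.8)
Proof proposal for Proposition~\ref{diag}.

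The plan is to obtain both short exact sequences from the long exact sequence in homotopy of the homotopy fibration $I_{i_n(\mathbb{S}^k/G)}\xrightarrow{p_{i_n(\mathbb{S}^k/G)}}F_n(\mathbb{S}^k/G)\xrightarrow{i_n(\mathbb{S}^k/G)}\prod_1^n\mathbb{S}^k/G$, combined with part~(\ref{it:diag1}) and with the identification of $\pi_m(I_{i_n(\mathbb{S}^k/G)})$ with $\pi_m(F_{n-1}^G(\mathbb{S}^k\backslash GQ_1))\times\pi_{m+1}(\prod_1^{n-1}\mathbb{S}^k)$ coming from the weak homotopy equivalence $k_n(\mathbb{S}^k)$ of Theorem~\ref{l}(\ref{item:l1}) and the homotopy equivalence of Corollary~\ref{P}(1). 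Before starting I would note that all the hypotheses of Theorem~\ref{l} are satisfied here (as was essentially checked just before Corollary~\ref{P}): $G$ is a $0$-dimensional Lie group acting freely and tamely, $\mathbb{S}^k$ is $1$-connected with $\dim(\mathbb{S}^k/G)=k\ge3$, the inclusions $\mathbb{S}^k\backslash GQ_1\hookrightarrow\mathbb{S}^k$ and $GQ_1\hookrightarrow\mathbb{S}^k$ are null homotopic, and the covering $q(\mathbb{S}^k)$ induces an isomorphism on $\pi_m$ for $m\ge2$; I would fix $i=n$ in~(\ref{E'}) throughout, matching the identifications set up just before Theorem~\ref{l}.

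For part~(\ref{it:diag1}), I would prove both inclusions. Writing $\sigma=\psi_n(\mathbb{S}^k)\circ\lambda_n(\mathbb{S}^k):\mathbb{S}^k\to F_n(\mathbb{S}^k/G)$ as provided by Lemma~\ref{fad}, each composite $p_{J_l}(\mathbb{S}^k/G)|_{F_n(\mathbb{S}^k/G)}\circ\sigma$ is homotopic to $q(\mathbb{S}^k)$, so all $n$ components of $\pi_j(i_n(\mathbb{S}^k/G))\circ\pi_j(\sigma)$ equal the isomorphism $\pi_j(q(\mathbb{S}^k))$, whence $\operatorname{im}(\pi_j(i_n(\mathbb{S}^k/G))\circ\pi_j(\sigma))=\Delta_j^n(\mathbb{S}^k/G)$ and in particular $\Delta_j^n(\mathbb{S}^k/G)\subseteq\operatorname{im}\pi_j(i_n(\mathbb{S}^k/G))$. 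Conversely, the fibration $F_{n-1}((\mathbb{S}^k/G)\backslash\bar Q_1)\hookrightarrow F_n(\mathbb{S}^k/G)\xrightarrow{p_{J_n}(\mathbb{S}^k/G)|_{F_n(\mathbb{S}^k/G)}}\mathbb{S}^k/G$ of Theorem~\ref{FC} admits $\pi_j(\sigma)\circ\pi_j(q(\mathbb{S}^k))^{-1}$ as a section on $\pi_j$, so $\pi_j(F_n(\mathbb{S}^k/G))=\operatorname{im}\pi_j(j_n'(\mathbb{S}^k/G))\oplus\operatorname{im}\pi_j(\sigma)$. On the first summand $\pi_j(i_n(\mathbb{S}^k/G))$ is zero, since $i_n(\mathbb{S}^k/G)\circ j_n'(\mathbb{S}^k/G)=j_n(\mathbb{S}^k/G)\circ i'_{n-1}(\mathbb{S}^k/G)$ by~(\ref{E'}) and $\pi_j(i'_{n-1}(\mathbb{S}^k/G))=0$ by Lemma~\ref{Lem}; on the second summand its image lies in $\Delta_j^n(\mathbb{S}^k/G)$. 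Hence $\operatorname{im}\pi_j(i_n(\mathbb{S}^k/G))=\Delta_j^n(\mathbb{S}^k/G)$, proving~(\ref{it:diag1}).

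The heart of part~(\ref{it:diag2}) is to compute, for every $m\ge2$, the image of $\hat\partial_m:\pi_m(\prod_1^n\mathbb{S}^k/G)\to\pi_{m-1}(I_{i_n(\mathbb{S}^k/G)})$. Identifying $\pi_m(\prod_1^n\mathbb{S}^k/G)$ with $\prod_1^n\pi_m(\mathbb{S}^k/G)$, there is the elementary direct-sum decomposition $\pi_m(\prod_1^n\mathbb{S}^k/G)=\Delta_m^n(\mathbb{S}^k/G)\oplus\pi_m(\prod_1^{n-1}\mathbb{S}^k)$, the second summand being the image of $\pi_m(\prod_1^{n-1}\mathbb{S}^k)$ under $\prod_1^nq(\mathbb{S}^k)\circ j_n(\mathbb{S}^k)$ (the ``last coordinate zero'' subgroup), because $(a_1,\dots,a_n)=(a_n,\dots,a_n)+(a_1-a_n,\dots,a_{n-1}-a_n,0)$ uniquely. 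By part~(\ref{it:diag1}), $\hat\partial_m$ annihilates $\Delta_m^n(\mathbb{S}^k/G)$, while by Theorem~\ref{l}(\ref{it:l3}) its restriction to $\pi_m(\prod_1^{n-1}\mathbb{S}^k)=\pi_{m-1}(\Omega(\prod_1^{n-1}\mathbb{S}^k))$ is the inclusion onto the second factor of $\pi_{m-1}(I_{i_n(\mathbb{S}^k/G)})\cong\pi_{m-1}(F_{n-1}^G(\mathbb{S}^k\backslash GQ_1))\times\pi_{m-1}(\Omega(\prod_1^{n-1}\mathbb{S}^k))$. Therefore $\operatorname{im}\hat\partial_m=\{0\}\times\pi_{m-1}(\Omega(\prod_1^{n-1}\mathbb{S}^k))$, and $\hat\partial_m$ restricts to an isomorphism of the complement $\pi_m(\prod_1^{n-1}\mathbb{S}^k)$ onto this image.

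It remains to assemble the two sequences. Taking $m=j$ and using part~(\ref{it:diag1}) to get $\ker\hat\partial_j=\operatorname{im}\pi_j(i_n(\mathbb{S}^k/G))=\Delta_j^n(\mathbb{S}^k/G)$, exactness gives $0\to\Delta_j^n(\mathbb{S}^k/G)\to\pi_j(\prod_1^n\mathbb{S}^k/G)\xrightarrow{\hat\partial_j}\pi_{j-1}(\prod_1^{n-1}\Omega(\mathbb{S}^k/G))\to0$ after identifying $\pi_{j-1}(\Omega(\prod_1^{n-1}\mathbb{S}^k))$ with $\pi_{j-1}(\prod_1^{n-1}\Omega(\mathbb{S}^k/G))$ via $q(\mathbb{S}^k)$; this is~(\ref{eq:ses1}), and it splits because $\hat\partial_j$ restricts to an isomorphism on a complement of $\Delta_j^n(\mathbb{S}^k/G)$. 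Taking $m=j+1$ gives $\operatorname{im}\hat\partial_{j+1}=\{0\}\times\pi_j(\Omega(\prod_1^{n-1}\mathbb{S}^k))$, so exactness yields $\ker\pi_j(i_n(\mathbb{S}^k/G))=\operatorname{im}\pi_j(p_{i_n(\mathbb{S}^k/G)})\cong\pi_j(I_{i_n(\mathbb{S}^k/G)})/\operatorname{im}\hat\partial_{j+1}\cong\pi_j(F_{n-1}^G(\mathbb{S}^k\backslash GQ_1))$; together with part~(\ref{it:diag1}) this is~(\ref{eq:ses2}), which splits because, by Lemma~\ref{fad}, composing $\pi_j(\sigma)$ with $\pi_j(q(\mathbb{S}^k))^{-1}$ and the inverse of the diagonal isomorphism $\pi_j(\mathbb{S}^k/G)\xrightarrow{\simeq}\Delta_j^n(\mathbb{S}^k/G)$ gives a section of $\pi_j(i_n(\mathbb{S}^k/G))$. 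When $G$ is trivial, $\mathbb{S}^k/G=\mathbb{S}^k$ and $F_{n-1}^G(\mathbb{S}^k\backslash GQ_1)=F_{n-1}(\mathbb{S}^k\backslash\{Q_1\})\cong F_{n-1}(\mathbb{R}^k)$, specialising~(\ref{eq:ses2}) to the last displayed sequence. The delicate point is the image computation for $\hat\partial_m$, which needs part~(\ref{it:diag1}), Theorem~\ref{l}(\ref{it:l3}) \emph{and} the direct-sum decomposition of $\pi_m(\prod_1^n\mathbb{S}^k/G)$ simultaneously, together with careful bookkeeping of the identifications via $\pi_*(q(\mathbb{S}^k))$ and $k_n(\mathbb{S}^k)$; the argument is uniform in $j\ge2$, the value $j=2$ (set aside in the discussion before the statement) being covered without change.
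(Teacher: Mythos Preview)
Your proof is correct and follows essentially the same route as the paper: part~(\ref{it:diag1}) via the section coming from Lemma~\ref{fad}, the direct-sum decomposition of $\pi_j(F_n(\mathbb{S}^k/G))$, and Lemma~\ref{Lem}; part~(\ref{it:diag2}) via the long exact sequence of the homotopy fibration together with Theorem~\ref{l}(\ref{it:l3}) and the decomposition $\prod_1^n\pi_m(\mathbb{S}^k/G)=\Delta_m^n\oplus(\text{last coordinate zero})$. The only organisational difference is that you compute $\operatorname{im}\hat\partial_m$ uniformly in $m$ and then specialise to $m=j$ and $m=j{+}1$, whereas the paper treats~(\ref{eq:ses2}) first and~(\ref{eq:ses1}) second; your handling of~(\ref{eq:ses2}) is in fact cleaner, since you identify the kernel of $\pi_j(i_n(\mathbb{S}^k/G))$ as $\pi_j(I_{i_n(\mathbb{S}^k/G)})/\operatorname{im}\hat\partial_{j+1}\cong\pi_j(F_{n-1}^G(\mathbb{S}^k\backslash GQ_1))$ explicitly, while the paper's citation of Proposition~\ref{AS} at that point is a slip (that proposition assumes a contractible universal cover).
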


\begin{proof} Let $j\geq 2$.
\begin{enumerate}[(1)]
\item First, note that the quotient map $q(\mathbb{S}^k) :\mathbb{S}^k\longrightarrow\mathbb{S}^k/G$
induces an isomorphism $\pi_j(q(\mathbb{S}^k)) :\pi_j(\mathbb{S}^k)\longrightarrow\pi_j(\mathbb{S}^k/G)$. Let $s_{j}: \pi_{j}(\mathbb{S}^k/G) \longrightarrow \pi_{j}(F_n(\mathbb{S}^k/G))$ be defined by $s_{j}= \pi_j(\psi_n(\mathbb{S}^k) \circ \lambda_i(\mathbb{S}^k)) \circ (\pi_{j}(q(\mathbb{S}^k)))^{-1}$. It follows from Lemma~\ref{fad} that:
\begin{equation*}
\pi_j(p_{J_i}(\mathbb{S}^k/G))\left\lvert_{\pi_j(F_n(\mathbb{S}^k/G))}\right. \circ s_{j}= \mbox{id}_{\pi_j(\mathbb{S}^k/G)}
\end{equation*}
for all $1\le i\le n$. In particular, the homomorphism $\pi_j(p_{J_i}(\mathbb{S}^k/G))\left\lvert_{\pi_j(F_n(\mathbb{S}^k/G))}\right.$ is surjective and admits a section $s_{j}$. Taking
the long exact sequence in homotopy of the fibration
$$F_{n-1}(\mathbb{S}^k/G\backslash \bar Q_1)\stackrel{j_{n}'(\mathbb{S}^k/G)}{\lhra} F_n(\mathbb{S}^k/G)\xrightarrow{p_{J_i}(\mathbb{S}^k/G)\left\lvert_{F_n(\mathbb{S}^k/G)}\right.} \mathbb{S}^k/G,$$
we obtain the following split short exact sequence:
\begin{equation*}
0 \longrightarrow \pi_{j}(F_{n-1}(\mathbb{S}^k/G\backslash \bar Q_1)) \xrightarrow{\pi_{j}(j_{n}'(\mathbb{S}^k/G))} \pi_{j}(F_n(\mathbb{S}^k/G))
 \xrightarrow{\pi_{j}(p_{J_i}(\mathbb{S}^k/G)\left\lvert_{F_n(\mathbb{S}^k/G)}\right.)} \pi_{j}(\mathbb{S}^k/G) \longrightarrow 0.
\end{equation*}
In particular,
\begin{equation}\label{eq:dirsum}
\pi_j(F_n(\mathbb{S}^k/G))= \operatorname{\text{Im}}(\pi_{j}(j_{n}'(\mathbb{S}^k/G))) \oplus \operatorname{\text{Im}}(s_{j}).
\end{equation}

Now let $1\leq i,l\leq n$. By Lemma~\ref{fad}, the composition
$p_{J_l}(\mathbb{S}^k/G)\left\lvert_{F_n(\mathbb{S}^k/G)}\right. \circ \psi_n(\mathbb{S}^k) \circ \lambda_i(\mathbb{S}^k)$ is homotopic to $q(\mathbb{S}^k)$, from which
we deduce that:
\begin{equation*}
\pi_j(i_n(\mathbb{S}^k/G)\circ\psi_n(\mathbb{S}^k)\circ\lambda_i(\mathbb{S}^k))(\pi_j(\mathbb{S}^k))\subseteq \Delta_j^n(\mathbb{S}^k/G).
\end{equation*}
Conversely, if $\alpha\in \pi_{j}(\mathbb{S}^k/G)$ then:
\begin{equation}\label{eq:insj}
\pi_j(i_n(\mathbb{S}^k/G)\circ\psi_n(\mathbb{S}^k)\circ\lambda_i(\mathbb{S}^k))((\pi_{j}(q(\mathbb{S}^k)))^{-1}(\alpha))=(\alpha,\ldots,\alpha),
\end{equation}
and we conclude that:
\begin{align*}
\Delta_j^n(\mathbb{S}^k/G)&= \pi_j(i_n(\mathbb{S}^k/G)\circ\psi_n(\mathbb{S}^k)\circ\lambda_i(\mathbb{S}^k))(\pi_j(\mathbb{S}^k))\\
&= \pi_j(i_n(\mathbb{S}^k/G)\circ\psi_n(\mathbb{S}^k)\circ\lambda_i(\mathbb{S}^k))\circ (\pi_{j}(q(\mathbb{S}^k)))^{-1}(\pi_j(\mathbb{S}^k/G))\\
&= \pi_j(i_n(\mathbb{S}^k/G)) \circ s_{j} (\pi_j(\mathbb{S}^k/G)),
\end{align*}
so
\begin{equation}\label{eq:imsj}
\pi_j(i_n(\mathbb{S}^k/G))(\operatorname{\text{Im}}(s_{j}))=\Delta_j^n(\mathbb{S}^k/G).
\end{equation}

In the commutative diagram of fibrations~(\ref{E'}), replace $X$ by $\mathbb{S}^k/G$, and $Q_{1}$ by $\bar{Q}_{1}$.
By Lemma~\ref{Lem}, the homomorphism $$\pi_j(i_{n-1}'(\mathbb{S}^k/G)) : \pi_j(F_{n-1}((\mathbb{S}^k/G)\backslash \bar Q_1))\longrightarrow\pi_j\biggl(\prod_1^{n-1}\mathbb{S}^k/G\biggr)$$
is trivial. Taking the long exact sequence in homotopy of~(\ref{E'}), and using the commutativity of the resulting diagram, we deduce that $\operatorname{\text{Im}}(\pi_{j}(j_{n}'(\mathbb{S}^k/G))) \subset \ker{\pi_j(i_n(\mathbb{S}^k/G))}$.
By taking the image of~(\ref{eq:dirsum}), it follows from this inclusion and~(\ref{eq:imsj}) that $\operatorname{\text{Im}}(\pi_j(i_n(\mathbb{S}^k/G)))= \Delta_j^n(\mathbb{S}^k/G)$ as required.

\item Consider the homotopy fibration
\begin{equation}\label{eq:homofinin}
I_{i_{n}(\mathbb{S}^k/G)} \xrightarrow{p_{i_{n}(\mathbb{S}^k/G)}} F_{n}(\mathbb{S}^k/G) \xrightarrow{i_{n}(\mathbb{S}^k/G)} \prod_1^{n}\mathbb{S}^k/G.
\end{equation}
Taking the long exact sequence in homotopy and using part~(\ref{it:diag1}) and the isomorphism between $\pi_{j}(I_{i_{n}(\mathbb{S}^k/G)})$ and $\pi_{j}(F_{n-1}^G(\mathbb{S}^k\backslash GQ_1))$ given by Proposition~\ref{AS},
we obtain the short exact sequence~(\ref{eq:ses2}). Let $s_{j}': \Delta_j^n(\mathbb{S}^k/G) \longrightarrow \pi_{j}(F_{n}(\mathbb{S}^k/G))$ be defined by $s_{j}'=s_{j}\circ \pi_j(p_{J_i}(\mathbb{S}^k/G))\left\lvert_{\Delta_j^n(\mathbb{S}^k/G)}\right.$. If $(\alpha,\ldots, \alpha)\in \Delta_j^n(\mathbb{S}^k/G)$ then $\pi_{j}(i_{n}(\mathbb{S}^k/G)) \circ s_{j}'(\alpha,\ldots, \alpha)=\pi_{j}(i_{n}(\mathbb{S}^k/G)) \circ s_{j}(\alpha)=(\alpha,\ldots, \alpha)$ by~(\ref{eq:insj}), which shows that~(\ref{eq:ses2}) splits.

We now derive the short exact sequence~(\ref{eq:ses1}). We take $X=\mathbb{S}^k$ in Theorem~(\ref{l})(\ref{it:l3}). The hypotheses of the theorem are satisfied because the inclusion $\mathbb{S}^k \backslash GQ_1 \lhra \mathbb{S}^k$ is null homotopic, and the homomorphism $\pi_{m}(q(\mathbb{S}^k)): \pi_{m}(\mathbb{S}^k) \longrightarrow \pi_{m}(\mathbb{S}^k/G)$ is an isomorphism for all $m\geq 2$ and is injective if $m=1$. As in the statement of Theorem~(\ref{l})(\ref{it:l3}), we consider the homotopy fibration~(\ref{eq:homofinin}), and we identify $\pi_{j-1}(I_{i_{n}(\mathbb{S}^k/G)})$ with $\pi_{j}(F_{n-1}^G(\mathbb{S}^k\backslash GQ_1))\times \pi_{j-1}(\prod_1^{n-1}\Omega(\mathbb{S}^k/G))$, and $\pi_{j-1}(\Omega(\mathbb{S}^k/G))$ with $\pi_{j}(\mathbb{S}^k/G)$. It then follows that the restriction to the subgroup $\pi_{j}(\prod_1^{n-1}\mathbb{S}^k/G)$ of $\pi_{j}(\prod_1^{n}\mathbb{S}^k/G)$ of the boundary homomorphism $\hat\partial_{j}: \pi_{j-1}(\Omega(\prod_1^n \mathbb{S}^k/G)) \longrightarrow \pi_{j-1}(I_{i_n(\mathbb{S}^k/G)})$ coincides with the inclusion of $\pi_{j}(\prod_1^{n-1}\mathbb{S}^k/G)$ in $\pi_{j-1}(F^G_{n-1}(\mathbb{S}^k\backslash GQ_1))\times\pi_{j-1}(\Omega(\prod_1^{n-1}\mathbb{S}^k/G))$. On the other hand, if $\alpha\in \pi_{j}(\mathbb{S}^k/G)$ then by part~(\ref{it:diag1}), $(\alpha,\ldots,\alpha)\in \operatorname{\text{Im}}(\pi_{j}(i_{n}(\mathbb{S}^k/G)))$, and so $\hat\partial_{j}(\alpha,\ldots,\alpha)=0$ by exactness. Applying Theorem~(\ref{l})(\ref{it:l3}) once more, we see that $\hat\partial_j(0,\ldots,0,\alpha)=\hat\partial_j(-\alpha,\ldots,-\alpha,0)=(0, -\alpha,\ldots,-\alpha)$, where in the final expression, $0$ (resp.\ $(-\alpha,\ldots,-\alpha)$) corresponds to the $\pi_{j}(F_{n-1}^G(\mathbb{S}^k\backslash GQ_1))$-factor (resp.\ to the $\pi_{j-1}(\prod_1^{n-1}\Omega(\mathbb{S}^k/G))$-factor). It follows from these considerations that $\operatorname{\text{Im}}(\hat\partial_j)=\pi_{j-1}(\prod_1^{n-1}\Omega(\mathbb{S}^k/G))$. We then obtain the short exact sequence~(\ref{eq:ses1}) by noting that $\ker(\hat\partial_j)=\Delta_j^n(\mathbb{S}^k/G)$ by exactness and part~(\ref{it:diag1}). The fact that~(\ref{eq:ses1}) splits is a consequence of Theorem~(\ref{l})(\ref{it:l3}).\qedhere
\end{enumerate}
\end{proof}

Note that Proposition \ref{diag} gives a complete description of the long exact homotopy sequence in homotopy of the homotopy fibration~(\ref{eq:homofinin}) in the case where $G$ is finite and $k$ is odd. The study of the configuration spaces $F_n(\mathbb{R}P^{2k})$ constitutes work in progress.

\vspace{3mm}

\noindent{\bf Faculty of Mathematics and Computer Science\\
University of Warmia and Mazury\\
S\l oneczna 54 Street\\
10-710 Olsztyn, Poland}\\
e-mail: \texttt{marekg@matman.uwm.edu.pl}

\vspace{1.8mm}

\noindent{\bf Department of Mathematics-IME\\
University of S\~ao Paulo\\
Caixa Postal 66.281-AG. Cidade de S\~ao Paulo\\
05314-970 S\~ao Paulo, Brasil}\\
e-mail: \texttt{dlgoncal@ime.usp.br}

\vspace{1.8mm}

\noindent{\bf Normandie Univ., UNICAEN, CNRS,\\
Laboratoire de Math\'ematiques Nicolas Oresme UMR CNRS~\textup{6139},\\
14000 Caen, France.}\\
e-mail: \texttt{john.guaschi@unicaen.fr}

\end{document}